\numberwithin{equation}{section}
\title{Paper-Scissors-Stone Model for Interacting 
Population and  its Limit Theorem\footnote {Research Memorandum 485  The Institute of Statitical Mathematics, 16 September1993, 
Yasunori Okabe (Department of Mathematics, Faculty of Science, Hokkaido University), Hajime Mano (Institute of Statistical Mathematics and The Graduate University for Advanced Studies), 
Yoshiaki Itoh (Institute of Statistical Mathematics and The Graduate University for Advanced Studies)}}
\author{Yasunori Okabe, Hajime Mano, Yoshiaki Itoh }
\date{}
\renewcommand{\section}{%
  \@startsection {section}{1}{\z@}%
    {-3.5ex \@plus -1ex \@minus -.2ex}%
    {2.3ex \@plus.2ex}%
    {\normalfont\Large\bfseries\scshape}%
}
\def\dint{\displaystyle\int}
\newtheorem{theo}{Theorem}[section]
\newtheorem{rem}{\normalfont \textit{Remark}}[section]
\newtheorem{proof}{\normalfont \textit{Proof}.}
\newtheorem{coro}{Corollary}[section]
\newtheorem{lemma}{Lemma}[section]
\def\dsum{\displaystyle\sum}
\def\dint{\displaystyle \int}
\def\dlim{\displaystyle\lim}
\begin{document}
\maketitle
\abstract{This paper treats a random collision model of three species, which is represented by the random time change of three standard Poisson processes. The prey-predator relation in the random collision model looks like paper-scissors-stone 
 game, and the model is called the paper-scissors model. At first, we investigate the stochastic structure of our model. By using stochastic calculus, the model is decomposed into a semi-martingale, and we prove a weak law of large numbers and a central limit theorem. The main purpose of this paper is to obtain an ordinary differential equation from the weak law and a stochastic differential equation from the central limit theorem.}
 
\noindent{\it Keywords}: martingale, optional sampling theorem, standard Poisson process, stopping time, strong law of large numbers, weak law of large numbers
 
\section{Introduction}
\label{sec1}

Problems of interspecific competitions have been studied by many authors since 
Lotka \cite{10} and Volterra \cite{14}, who studied interacting populations as 
a deterministic system. The larger populations are implicitly assumed for the 
deterministic system. For smaller populations the random sampling effect 
should be taken into account. Ehrenfest's urn model was mathematically analyzed by 
Kac \cite{7}. Moran \cite{11} studied an urn model for the random genetic 
drift introduced by Fisher \cite{2} and Wright \cite{15}. 
Itoh \cite{4,5,6} introduced a random collision model which is an urn model for 
competing species in finite numbers of individuals of several types interacting 
with each other and studied the probability of coexistence of species by use of 
oriented graphs.

We discuss the random collision model (\cite{6}) which satisfies the following:
\begin{enumerate}[(i)]
  \item 
  There are three species 1, 2 and 3 whose numbers of particles at time $t$ are 
  $X_1^{(M)}(t)$, $X_2^{(M)}(t)$ and $X_3^{(M)}(t)$ respectively, 
  where $X_1^{(M)}(t)+X_2^{(M)}(t)+X_3^{(M)}(t)=M$. 
  We denote $X^{(M)}(t)=(X_1^{(M)}(t), X_2^{(M)}(t), X_3^{(M)}(t))$.
  \item 
  Each particle collides with another particle $dt$ times on the average per time 
  length $dt$.
  \item 
  Each particle is in a chaotic bath of particles. 
  Each colliding pair is equally likely chosen.
  \item 
  Collisions between particles of the same species do not make any change. 
  A particle of species $i$ and a particle of species $i+1$ collide with each other 
  and become two particles of species $i$, where $i=1,2,3$ 
  and if $i=3$ then we set $i+1=1$ and if $i=1$ then we set $i-1=3$ from now on.
\end{enumerate}
A model written by the random time change of three standard Poisson processes 
is given by Itoh \cite{6}:
{\mathindent=0mm
\begin{align*}
  \left\{\renewcommand{\arraystretch}{2}
    \begin{array}{@{}l@{}}
    X_1^{(M)}(t)=X_1^{(M)}(0)+N_{12}
    \left(\dfrac{\lambda}{M} \dint_0^t X_1^{(M)}(s) X_2^{(M)}(s) d s\right)
    -N_{31}\left(\dfrac{\lambda}{M} \dint_0^t X_3^{(M)}(s) X_1^{(M)}(s) d s\right),
    \\
    X_2^{(M)}(t)=X_2^{(M)}(0)+N_{23}
    \left(\dfrac{\lambda}{M} \dint_0^t X_2^{(M)}(s) X_3^{(M)}(s) d s\right)
    -N_{12}\left(\dfrac{\lambda}{M} \dint_0^t X_1^{(M)}(s) X_2^{(M)}(s) d s\right), 
    \\
    X_3^{(M)}(t)=X_3^{(M)}(0)+N_{31}
    \left(\dfrac{\lambda}{M} \dint_0^t X_3^{(M)}(s) X_1^{(M)}(s) d s\right)
    -N_{23}\left(\dfrac{\lambda}{M} \dint_0^t X_2^{(M)}(s) X_3^{(M)}(s) d s\right), 
    \\
    X_1^{(M)}(0)+X_2^{(M)}(0)+X_3^{(M)}(0)=M,
    \end{array}\right.
\end{align*}
}%
where $X_i^{(M)}(0)$ are initial values $(i=1,2,3)$. 
We call this model paper-scissors-stone model because of the cyclic 
prey-predator relation, as in paper-scissors-stone game.

We discuss this model in this paper. A random collision model of two species represented by the random time change of one Poisson process is analyzed to obtain
a strong law of large numbers in \cite{12}. 
We develop a stochastic analysis for the following queuing model to the 
paper-scissors-stone model.

Kogan, Liptser, Shiryayev and Smorodinski \cite{8,9} treated a queuing model of 
computer networks. The queuing model, discussed there, is constructed by mutually 
independent queues. They successfully analyzed their queuing model by the martingale 
method. 
They proved a weak law of large numbers and a central limit theorem for a certain 
queuing model by using stochastic calculus and obtained a system of ordinary 
differential equations by a weak law of large numbers and a system of stochastic 
differential equations of the Gaussian diffusion process by a central limit theorem.

This paper treats a random collision model of three species represented by 
the random time change of Poisson processes. Three cyclic prey-predator relations in 
the model complicate the situation. Motivated by the martingale method, 
we analyze the paper-scissors-stone model and investigate limit theorems in detail. 
In the queuing model and our model each component of the stochastic process is 
decomposed into a counting process of the number arriving over time $t$ and 
a counting process of the number serviced by time $t$. In our model the number 
increasing over time $t$ of $i$-th component is equal to the number decreasing by 
time $t$ of $i+1$-th component. 
Differently from the queuing model, our model has a stochastic structure that 
martingales are not orthogonal and that bounded variations are continuous. 
We obtain a system of ordinary differential equations by a weak law of large numbers 
and a system of stochastic differential equations by a central limit theorem.

In the present paper we mainly aim for the paper-scissors-stone model to obtain 
an ordinary differential equation from a weak law of large numbers and a stochastic 
differential equation of the Gaussian diffusion process from a central limit theorem. 
We solve the paper-scissors-stone model explicitly in section~\ref{sec2}. 
We find a reference family which represents for our problem to apply the optional 
sampling theorem to get a stochastic structure of our model in section~\ref{sec3}. 
Martingales in different components are not orthogonal. In section~\ref{sec4} and 
section~\ref{sec6}, we briefly mention about the extension of the weak law of large 
numbers and of the central limit theorem for the paper-scissors-stone model. 
For the paper-scissors-stone model we obtain a system of ordinary differential 
equations from a weak law of large numbers in section 5 and a system of stochastic 
differential equations of the Gaussian diffusion process from 
a central limit theorem in section~\ref{sec7}.

\section{Paper-scissors-stone model and solution}
\label{sec2}

Let us consider a population of three species in which individuals randomly interact 
with each other. Changes occur by interactions only between two particles of different 
species. 
If two individuals annihilate by the interaction, then two individuals of the dominant 
species are created. Thus the total number of the particles is invariant under 
interactions.

We set any positive integer $M$ which denotes the total number of the particles of 
a system. For each $j$, $j=1,2,3$, let $X_j^{(M)}(*)$ be the stochastic process 
which denotes the number of individuals of species $j$. 
We assume that $X_j^{(M)}(*)$ is dominant and $X_{j+1}^{(M)}(*)$ 
is recessive between species $j$ and species $j+1$ ($j=1,2,3$ and 
if $j=3$ then we set $j+1=1$ and if $j=1$ 
then we set $j-1=3$ from now on). Moreover it is
assumed that each stochastic process is represented by the time change of standard 
Poisson processes $N_{\diamond}(*)$ in a differential form as
\begin{equation}
    \renewcommand{\arraystretch}{2}
  \left\{
    \begin{array}{@{}l@{}}
    dX_1^{(M)}(t)=dN_{12}
    \left(\dfrac{\lambda}{M} \dint_0^t X_1^{(M)}(s) X_2^{(M)}(s)ds\right)
    -d N_{31}\left(\dfrac{\lambda}{M} 
    \dint_0^t X_3^{(M)}(s) X_1^{(M)}(s) d s\right) ,
    \\
    d X_2^{(M)}(t)=d N_{23}\left(\dfrac{\lambda}{M} 
    \dint_0^t X_2^{(M)}(s) X_3^{(M)}(s) d s\right)-d N_{12}
    \left(\dfrac{\lambda}{M} \dint_0^t X_1^{(M)}(s) X_2^{(M)}(s)ds\right), 
    \\
    d X_3^{(M)}(t)=d N_{31}\left(\dfrac{\lambda}{M} \dint_0^t X_3^{(M)}(s) 
    X_1^{(M)}(s) d s\right)-d N_{23}\left(\dfrac{\lambda}{M} 
    \dint_0^t X_2^{(M)}(s) X_3^{(M)}(s) d s\right),
  \end{array}\right. \label{eq2.1}
\end{equation}
where $\lambda$ is a positive constant. 
This is also written in the integral form as
\begin{equation}
  \left\{
    \begin{aligned}
      &
      X_1^{(M)}(t)=X_1^{(M)}(0) 
      +N_{12}\left(\frac{\lambda}{M} \int_0^t X_1^{(M)}(s) X_2^{(M)}(s) 
      d s\right) \\
      &\qquad\qquad
       -N_{31}\left(\frac{\lambda}{M} 
       \int_0^t X_3^{(M)}(s) X_1^{(M)}(s) d s\right), \\
      &
      X_2^{(M)}(t)=X_2^{(M)}(0) +N_{23}\left(\frac{\lambda}{M} 
      \int_0^t X_2^{(M)}(s) X_3^{(M)}(s) d s\right) \\
      &\qquad\qquad 
      -N_{12}\left(\frac{\lambda}{M} 
      \int_0^t X_1^{(M)}(s) X_2^{(M)}(s) d s\right), \\
      &
      X_3^{(M)}(t)=X_3^{(M)}(0) +N_{31}\left(\frac{\lambda}{M} 
      \int_0^t X_3^{(M)}(s) X_1^{(M)}(s) d s\right) \\
      &\qquad\qquad
      -N_{23}\left(\frac{\lambda}{M} 
      \int_0^t X_2^{(M)}(s) X_3^{(M)}(s) d s\right), \\
      &
      X_1^{(M)}(0)+X_2^{(M)}(0) +X_3^{(M)}(0)=M,
\end{aligned}\right.
  \label{eq2.2}
\end{equation}
where $X_j^{(M)}(0)$ are initial values of $X_j^{(M)}(*)$ $(j=1,2,3)$.
\begin{rem}
  \label{rem2.1}\normalfont 
  The case of the $n$-species is treated in a similar way as the paper-scissors-stone model.
\end{rem}

\begin{theo}
  \label{theo2.1}
  There exists a unique solution of equation~{\rm (\ref{eq2.2})}.
\end{theo}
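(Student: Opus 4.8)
The plan is to construct the solution pathwise, jump by jump, exploiting that each coordinate is integer-valued subject to $X_1^{(M)}+X_2^{(M)}+X_3^{(M)}=M$, so that the integrands $X_i^{(M)}(s)X_{i+1}^{(M)}(s)$ appearing inside the Poisson processes are bounded by $M^2/4$ and the time-change arguments $\Theta_{i,i+1}(t)=\frac{\lambda}{M}\int_0^t X_i^{(M)}(s)X_{i+1}^{(M)}(s)\,ds$ are Lipschitz with constant at most $\lambda M/4$. First I would fix a sample point $\omega$ outside a null set for which the three standard Poisson processes $N_{12},N_{23},N_{31}$ are genuine counting paths — right-continuous, nonnegative-integer-valued, unit jumps, finitely many jumps on every bounded interval — and moreover have pairwise disjoint sets of jump times, which holds almost surely by independence.

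Then I would run the following induction. Put $\tau_0=0$ and let $X^{(M)}(0)$ be the prescribed initial vector. Suppose the solution has been determined on $[0,\tau_n]$, with $X^{(M)}(\tau_n)$ a fixed vector in $\{(a_1,a_2,a_3):a_i\ge 0 \text{ integers}, \ a_1+a_2+a_3=M\}$. Forcing the integrands to stay constant after $\tau_n$ makes each $\Theta_{i,i+1}$ affine on $[\tau_n,\cdot)$ with known slope $\frac{\lambda}{M}X_i^{(M)}(\tau_n)X_{i+1}^{(M)}(\tau_n)$, so the three composed processes $t\mapsto N_{i,i+1}(\Theta_{i,i+1}(t))$ are fully determined; let $\tau_{n+1}$ be the first time after $\tau_n$ at which one of them jumps, with $\tau_{n+1}=+\infty$ if none does. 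On $[\tau_n,\tau_{n+1})$ we set $X^{(M)}$ equal to $X^{(M)}(\tau_n)$, and at $\tau_{n+1}$ we update by adding the $\pm$ unit vector dictated by (\ref{eq2.2}) for whichever process jumps; by the disjointness of jump times exactly one does. A key structural point that keeps the construction self-consistent is that if some coordinate, say $X_{i+1}^{(M)}(\tau_n)$, equals $0$, then the slope of $\Theta_{i,i+1}$ vanishes, so $N_{i,i+1}$ is frozen and $X_{i+1}^{(M)}$ cannot be driven negative; likewise the three jump increments sum to $0$, so the total stays $M$. Hence $X^{(M)}(\tau_{n+1})$ again lies in the admissible simplex and the induction continues.

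It then remains to check non-explosion, i.e.\ $\tau_n\to\infty$. Via the time change, the number of jumps of $N_{12}(\Theta_{12}(\cdot))+N_{23}(\Theta_{23}(\cdot))+N_{31}(\Theta_{31}(\cdot))$ on $[0,T]$ equals $N_{12}(\Theta_{12}(T))+N_{23}(\Theta_{23}(T))+N_{31}(\Theta_{31}(T))$, which is bounded above by $N_{12}(\lambda M T)+N_{23}(\lambda M T)+N_{31}(\lambda M T)$ and hence almost surely finite; consequently only finitely many $\tau_n$ lie in $[0,T]$ for every $T$, so the construction produces a process defined on all of $[0,\infty)$ that by construction satisfies (\ref{eq2.2}). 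For uniqueness I would argue that any solution of (\ref{eq2.2}) must coincide with this one: between successive jumps of the right-hand side the coordinates are necessarily constant, the next jump time is necessarily the first jump time of the then-affine composed processes, and the jump itself is necessarily the one prescribed by (\ref{eq2.2}); an induction on the ordered jump times forces agreement on every $[0,\tau_n]$, hence everywhere. The main obstacle, and essentially the only place requiring care, is the apparent circularity that the argument of each Poisson process depends on the very solution being sought — resolved precisely by the stepwise construction above, together with the observations that the discreteness of the state and the vanishing of the rates on the boundary of the simplex prevent the solution from leaving the admissible set, from producing simultaneous jumps, or from exploding in finite time.
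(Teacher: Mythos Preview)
Your proposal is correct and follows essentially the same pathwise, jump-by-jump construction as the paper: fix a sample path, exploit that the solution is piecewise constant so the time-change arguments are affine between jumps, determine the next system jump as the minimum over the three affine hitting times, update, and iterate; uniqueness likewise goes by induction on the ordered jump times. The only notable differences are cosmetic---the paper carries out an exhaustive case analysis (Cases A--C) for configurations where some coordinates have hit zero and even treats simultaneous jumps of two or three Poisson clocks, whereas you dispose of the latter as a null set and absorb the boundary cases into the single observation that zero coordinates freeze the relevant rates; your non-explosion bound via $\Theta_{i,i+1}(T)\le \lambda M T$ is also cleaner than anything made explicit in the paper.
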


\begin{proof}\normalfont 
  We fix a sample path. We denote $\{\tau_i^{j j+1}\}_{i \geq 0}$ 
  as the set of the jump times of three standard Poisson process 
  $N_{j j+1}(*)$ where we put $\tau_0^{j j+1}=0$ $(j=1,2,3)$. 
  Note that $0=\tau_0^{j j+1} < \tau_1^{j j+1} < \tau_2^{j j+1}
  < \cdots <\tau_i^{j j+1} < \tau_{i+1}^{j j+1} < \cdots$ 
  for $j=1,2,3$.

We construct a solution of equation~(\ref{eq2.2}) actually. 
This construction is done step by step. 
From $t=0$ we trace the time when the system of (\ref{eq2.2}) 
has a change of the previous state. The change of the system occurs by 
the jumps of some of the Poisson processes.

We denote $\sigma(l)$ as the $l$-th jump time of the system at which 
the system has a change and define $\sigma(0)=0$. 
For $j=1,2,3$ we denote $K^{j j+1}(l)$ as the total number of 
the jumps of the Poisson process $N_{j j+1}(*)$ 
to the extent of the $l$-th jump time $\sigma(l)$, 
that we call the $l$-th step, and we define $K^{j j+1}(0)=0$. 
And we define for each $t \in[0, \sigma(l)]$ $(j=1,2,3)$
\begin{align*}
  T_{j j+1}^{(M)}(t)
  =\frac{\lambda}{M} \int_0^t X_j^{(M)}(s) X_{j+1}^{(M)}(s) ds,
\end{align*}
by the constructed solution to the extent of the $l$-th step. 
For an integer $l$, $l \geq 0$, and for an integer $k$, 
$k \geq 1$, $(1 \leq j \leq 3)$ we define two propositions 
$P_j(l)$ and $P_j(k-1, k)$ as
\begin{align*}
  &P_j(l): T_{j j+1}^{(M)}(\sigma(l)) 
  \in \Big[\tau_{K^{j j+1}(l)}^{j j+1}, \tau_{K^{j j+1}(l)+1}^{j j+1}
  \Big),
  \\
  &
  P_j(k-1, k): T_{j j+1}^{(M)}(t) 
  \in \Big[\tau_{K^{j j+1}(k-1)}^{j j+1}, \tau_{K^{j j+1}(k-1)+1}^{j j+1}) 
  ~~\text { \textit{for} }~ 
  t \in (\sigma(k-1), \sigma(k)\Big). 
\end{align*}

We shall prove existence of the solution of the system by mathematical 
induction on $I$.

We prove $P_j(0)$ for $j=1,2,3$. 
The initial values are given as $X_j^{(M)}(\sigma(0))
=  X_j^{(M)}(0)$.
\begin{align*}
  T_{j j+1}^{(M)}(\sigma(0))
  =\frac{\lambda}{M} \int_0^{\sigma(0)} X_j^{(M)}(s) 
  X_{j+1}^{(M)}(s) d s
  =\frac{\lambda}{M} \int_0^0 X_j^{(M)}(s) X_{j+1}^{(M)}(s) ds=0.
\end{align*}
Thus
\begin{align*}
  \tau_0^{j j+1}=T_{j j+1}^{(M)}(\sigma(0)) 
  = 0 <\tau_1^{j j+1} .
\end{align*}
As we define $K^{j j+1}(0)=0$ for $j=1,2,3$, we have 
\begin{align*}
  \tau_{K^{j j+1}(0)}^{j j+1} 
  = T_{j j+1}^{(M)}(\sigma(0)) = 0<\tau_{K ^{j j+1}(0)+1}^{j j+1}.
\end{align*}
Therefore $P_j(0)$ hold for $j=1,2,3$. 

It follows that
\begin{align*}
  N_{j j+1}\left(T_{j j+1}^{(M)}(\sigma(0))\right)=0 . 
\end{align*}
At $t=0$,
\begin{equation}
  \left\{\renewcommand{\arraystretch}{3}
  \begin{array}{@{}l@{}}
    X_1^{(M)}(\sigma(0))
    =X_1^{(M)}(0)+\dsum_{i=1}^{K^{12}(0)}(+1)
    +\dsum_{i=1}^{K^{31}(0)}(-1), \\
    X_2^{(M)}(\sigma(0))
    =X_2^{(M)}(0)+\dsum_{i=1}^{K^{23}(0)}(+1)
    +\dsum_{i=1}^{K^{12}(0)}(-1), \\
    X_3^{(M)}(\sigma(0))=X_3^{(M)}(0)
    +\dsum_{i=1}^{K^{31}(0)}(+1)+\dsum_{i=1}^{K^{23}(0)}(-1),
  \end{array}\right.
  \label{eq2.3}
\end{equation}
is replaced by
\begin{align*}
  \left\{\renewcommand{\arraystretch}{1.5}
    \begin{array}{@{}l@{}}
      X_1^{(M)}(\sigma(0))
      =X_1^{(M)}(0)+N_{12}\left(T_{j j+1}^{(M)}(\sigma(0))\right)
      -N_{31}\left(T_{j j+1}^{(M)}(\sigma(0))\right) ,
      \\
      X_2^{(M)}(\sigma(0))
      =X_2^{(M)}(0)+N_{23}\left(T_{j j+1}^{(M)}(\sigma(0))\right)
      -N_{12}\left(T_{j j+1}^{(M)}(\sigma(0))\right), \\
      X_3^{(M)}(\sigma(0))
      =X_3^{(M)}(0)+N_{31}\left(T_{j j+1}^{(M)}(\sigma(0))\right)
      -N_{23}\left(T_{j j+1}^{(M)}(\sigma(0))\right).
  \end{array}\right.
\end{align*}
Consequently there exists a solution which has a form of (\ref{eq2.3}) 
at $\sigma(0)$.

We assume the solution in $[0, \sigma(I-1)]$ $(I \geq 1)$ with 
propositions. Note that for the mathematical induction we assume the 
solution, at $t=\sigma(I-1)$,
\begin{equation}
  \left\{\renewcommand{\arraystretch}{3}
    \begin{array}{@{}l@{}}
      X_1^{(M)}(\sigma(I-1))=X_1^{(M)}(0)
      +\dsum_{i=1}^{K^{12}(I-1)}(+1)+\dsum_{i=1}^{K^{31}(I-1)}(-1), 
      \\
      X_2^{(M)}(\sigma(I-1))=X_2^{(M)}(0)
      +\dsum_{i=1}^{K^{23}(I-1)}(+1)+\dsum_{i=1}^{K^{12}(I-1)}(-1), 
      \\
      X_3^{(M)}(\sigma(I-1))=X_3^{(M)}(0)
      +\dsum_{i=1}^{K^{31}(I-1)}(+1)+\dsum_{i=1}^{K^{23}(I-1)}(-1),
\end{array}\right.
\label{eq2.4}
\end{equation}
with the propositions $P_j(I-1)$ $(j=1,2,3)$. 
This equation~(\ref{eq2.4}) is obtained from replacing 0 by $I-1$ 
in (\ref{eq2.3}).

For $t \in(\sigma(I-1), \sigma(I))$ 
we construct the solution of the system of (\ref{eq2.2}) as
\begin{equation}
  \left\{\renewcommand{\arraystretch}{2.5}
  \begin{array}{@{}l@{}}
    X_1^{(M)}(t)
    =X_1^{(M)}(0)+\dsum_{i=1}^{K^{12}(I-1)}(+1)
    +\dsum_{i=1}^{K^{31}(I-1)}(-1), \\
    X_2^{(M)}(t)
    =X_2^{(M)}(0)+\dsum_{i=1}^{K^{23}(I-1)}(+1)
    +\dsum_{i=1}^{K^{12}(I-1)}(-1), \\
    X_3^{(M)}(t)
    =X_3^{(M)}(0)+\dsum_{i=1}^{K^{31}(I-1)}(+1)
    +\dsum_{i=1}^{K^{23}(I-1)}(-1),
  \end{array}\right.
  \label{eq2.5}
\end{equation}
and at $t=\sigma(I)$ as
\begin{equation}
  \left\{\renewcommand{\arraystretch}{2.5}
    \begin{array}{@{}l@{}}
      X_1^{(M)}(\sigma(I))=X_1^{(M)}(0)+\dsum_{i=1}^{K^{12}(I)}(+1)
      +\dsum_{i=1}^{K^{31}(I)}(-1), \\
      X_2^{(M)}(\sigma(I))=X_2^{(M)}(0)+\dsum_{i=1}^{K^{23}(I)}(+1)
      +\dsum_{i=1}^{K^{12}(I)}(-1), \\
      X_3^{(M)}(\sigma(I))=X_3^{(M)}(0)+\dsum_{i=1}^{K^{31}(I)}(+1)
      +\dsum_{i=1}^{K^{23}(I)}(-1),
    \end{array}\right.
  \label{eq2.6}
\end{equation}
where $\sigma(I)$ and $K^{j j+1}(I)$ are setted 
in [Case A]$\sim$[Case C]. 

\noindent
[Case A] We consider the case of $X_j^{(M)}(\sigma(l))>0$ for 
$0 \leq l \leq I-1$ and $1 \leq j \leq 3$. 
This case describes that the values of all random variables 
have not reached zero.

We determine $\sigma(I)$ as
\begin{equation}
  \sigma(I)=\min _{1 \leq j \leq 3}
  \left\{
    \sigma(I-1)
    +\dfrac{\tau_{K^{j j+1}(I-1)+1}^{j j+1} 
    - T_{j j+1}^{(M)}(\sigma(I-1))
    }{
      \dfrac{\lambda}{M} X_j^{(M)}(\sigma(I-1)) X_{j+1}^{(M)}
      (\sigma(I-1))}\right\} .
  \label{eq2.7}
\end{equation}
By taking the minimum of $1 \leq j \leq 3$, we count up one in 
$K^{j j+1}(I)$ for the selected number and we do not count up one for 
the not selected number. If by taking the minimum of $1 \leq j \leq 3$ 
the number $j=1$ is selected, for example, then we have 
$K^{12}(I)=K^{12}(I-1)+1$, $K^{23}(I)=K^{23}(I-1)$ and 
$K^{31}(I)=K^{31}(I-1)$. 
If by taking the minimum of $1 \leq j \leq 3$ the numbers $j=1,2$ 
are selected, then we have $K^{12}(I)=K^{12}(I-1)+1$, 
$K^{23}(I)=K^{23}(I-1)+1$ and $K^{31}(I)=K^{31}(I-1)$. 
If by taking the minimum of $1 \leq j \leq 3$ the numbers 
$j=1,2,3$ are selected, then we have $K^{12}(I)=K^{12}(I-1)+1$, 
$K^{23}(I)=K^{23}(I-1)+1$ and $K^{31}(I)=K^{31}(I-1)+1$. 
And a solution of the system of (\ref{eq2.2}) is as 
in (\ref{eq2.5}) and (\ref{eq2.6}).

Now we prove $P_j(I)$ and $P_j(I-1, I)$ for $j=1,2,3$.

If by taking the minimum of $1 \leq j \leq 3$ the number $j=1$ 
is selected for example, we have $K^{12}(I)=K^{12}(I-1)+1$, 
$K^{23}(I)=K^{23}(I-1)$ and $K^{31}(I)=K^{31}(I-1)$.

In the present case the number $j=1$ is selected by taking the minimum 
of (\ref{eq2.7}). This means
\begin{align*}
  & \sigma(I)=\sigma(I-1)
  +\dfrac{\tau_{K^{12}(I-1)+1}^{12}-T_{12}^{(M)}(\sigma(I-1))
  }{\dfrac{\lambda}{M} X_1^{(M)}(\sigma(I-1)) X_2^{(M)}(\sigma(I-1))}, 
  \\
  & \sigma(I)<\sigma(I-1)
  +\dfrac{\tau_{K^{23}(I-1)+1}^{23}-T_{23}^{(M)}(\sigma(I-1))
  }{\dfrac{\lambda}{M} X_2^{(M)}(\sigma(I-1)) X_3^{(M)}(\sigma(I-1))}, 
  \\
  & \sigma(I)<\sigma(I-1)
  +\dfrac{\tau_{K^{31}(I-1)+1}^{31}-T_{31}^{(M)}(\sigma(I-1))
  }{\dfrac{\lambda}{M} X_3^{(M)}(\sigma(I-1)) X_1^{(M)}(\sigma(I-1))}.
\end{align*}
From $P_j(I-1)$ for $1 \leq j \leq 3$ all numerators are positive 
and all random variables $X_j^{(M)}(\sigma(I-1))$ are positive 
in [Case A]. Thus we have $\sigma(I)>\sigma(I-1)$ and
\begin{align*}
  & \tau_{K^{12}(I-1)+1}^{12}
  =T_{12}^{(M)}(\sigma(I-1))+\frac{\lambda}{M} X_1^{(M)}(\sigma(I-1)) 
  X_2^{(M)}(\sigma(I-1))(\sigma(I)-\sigma(I-1)), 
  \\
  & \tau_{K^{23}(I-1)+1}^{23}>T_{23}^{(M)}(\sigma(I-1))
  +\frac{\lambda}{M} X_2^{(M)}(\sigma(I-1)) 
  X_3^{(M)}(\sigma(I-1))(\sigma(I)-\sigma(I-1)), 
  \\
  & \tau_{K^{31}(I-1)+1}^{31}>T_{31}^{(M)}(\sigma(I-1))
  +\frac{\lambda}{M} X_3^{(M)}(\sigma(I-1)) 
  X_1^{(M)}(\sigma(I-1))(\sigma(I)-\sigma(I-1)) .
\end{align*}
[Step 1] We consider the propositions for the selected number $j=1$. 

For $\sigma(I-1)<t<\sigma(I)$ we have
\begin{align*}
  T_{12}^{(M)}(t) 
  & =\int_0^t X_1^{(M)}(s) X_2^{(M)}(s) d s \\
  & =T_{12}^{(M)}(\sigma(I-1))
  +\frac{\lambda}{M} X_1^{(M)}(\sigma(I-1)) X_2^{(M)}(\sigma(I-1))
  (t-\sigma(I-1)),
\end{align*}
and
\begin{align*}
  T_{12}^{(M)}(\sigma(I)) 
  & =\int_0^{\sigma(I)} X_1^{(M)}(s) X_2^{(M)}(s) d s \\
  & =T_{12}^{(M)}(\sigma(I-1))
  +\frac{\lambda}{M} X_1^{(M)}(\sigma(I-1)) 
  X_2^{(M)}(\sigma(I-1))(\sigma(I)-\sigma(I-1)) \\
  & =\tau_{K^{12}(I-1)+1}^{12}.
\end{align*}
The condition of positiveness of random variables 
$X_j^{(M)}(\sigma(I-1))$ in [Case A] leads 
$T_{12}^{(M)}(\sigma(I-1)) < T_{12}^{(M)}(t) 
< T_{12}^{(M)}(\sigma(I))$ for $\sigma(I-1) < t < \sigma(I)$. 
From $P_1(I-1)$ it follows that
\begin{align*}
  \tau_{K^{12}(I-1)}^{12} 
  & \leq T_{12}^{(M)}(\sigma(I-1))<T_{12}^{(M)}(t)
  <T_{12}^{(M)}(\sigma(I))=\tau_{K^{12}(I-1)+1}^{12}, \\
  & \tau_{K^{12}(I-1)+1}^{12}
  =\tau_{K^{12}(I)}^{12}=T_{12}^{(M)}(\sigma(I))
  < \tau_{K^{12}(I)+1}^{12}.
\end{align*}
Therefore $P_1(I-1, I)$ and $P_1(I)$ hold. 
\hfill $\diamondsuit$

\noindent
[Step 2] We consider the propositions for the not selected number 
$j=2$.

For $\sigma(I-1)<t<\sigma(I)$ we have
\begin{align*}
  T_{23}^{(M)}(t)
  =T_{23}^{(M)}(\sigma(I-1))+\frac{\lambda}{M} 
  X_2^{(M)}(\sigma(I-1)) X_3^{(M)}
  (\sigma(I-1))(t-\sigma(I-1)),
\end{align*}
and
\begin{align*}
  T_{23}^{(M)}(\sigma(I)) 
  & =T_{23}^{(M)}(\sigma(I-1))
  +\frac{\lambda}{M} X_2^{(M)}(\sigma(I-1)) 
  X_3^{(M)}(\sigma(I-1))(\sigma(I)-\sigma(I-1)) \\
  & <\tau_{K^{23}(I-1)+1}^{23}.
\end{align*}
From $P_2(I-1)$ it follows that
\begin{align*}
&
  \tau_{K^{23}(I-1)}^{23} \leq T_{23}^{(M)}(\sigma(I-1))
  <T_{23}^{(M)}(t)<T_{23}^{(M)}(\sigma(I))<\tau_{K^{23}(I-1)+1}^{23}, 
  \\
  &\quad
  \tau_{K^{23}(I-1)}^{23}
  =\tau_{K^{23}(I)}^{23}<T_{23}^{(M)}(\sigma(I))
  <\tau_{K^{23}(I-1)+1}^{23}=\tau_{K^{23}(I)+1}^{23}.
\end{align*}
Therefore $P_2(I-1, I)$ and $P_2(I)$ hold. \hfill $\diamondsuit$

For the not selected number $j=3$, $P_3(I-1, I)$ 
and $P_3(I)$ also hold.

If by taking the minimum of $1 \leq j \leq 3$ the numbers $j=1,2$ 
are selected, we have $K^{12}(I)=K^{12}(I-1)+1$, 
$K^{23}(I)=K^{23}(I-1)+1$ and $K^{31}(I)=K^{31}(I-1)$. 
The selection of the numbers $j=1,2$ means
\begin{align*}
& 
  \sigma(I)=\sigma(I-1)
  +\dfrac{\tau_{K^{12}(I-1)+1}^{12}-T_{12}^{(M)}(\sigma(I-1))
  }{\dfrac{\lambda}{M} X_1^{(M)}(\sigma(I-1)) X_2^{(M)}(\sigma(I-1))}, 
  \\
& 
  \sigma(I)=\sigma(I-1)
  +\dfrac{\tau_{K^{23}(I-1)+1}^{23}-T_{23}^{(M)}(\sigma(I-1))
  }{\dfrac{\lambda}{M} X_2^{(M)}(\sigma(I-1)) X_3^{(M)}(\sigma(I-1))}, 
  \\
& 
  \sigma(I)<\sigma(I-1)
  +\dfrac{\tau_{K^{31}(I-1)+1}^{31}-T_{31}^{(M)}(\sigma(I-1))
  }{\dfrac{\lambda}{M} X_3^{(M)}(\sigma(I-1)) X_1^{(M)}(\sigma(I-1))}.
\end{align*}
For the selected number $j=1,2$ we have the propositions $P_j(I-1, I)$ 
and $P_j(I)$ similarly as in [Step~1]. 
For the not selected number $j=3$ the propositions $P_3(I-1, I)$ 
and $P_3(I)$ hold in a similar way as [Step~2].

If by taking the minimum of $1 \leq j \leq 3$ the numbers $j=1,2,3$ are selected, 
we have $K^{j j+1}(I)=K^{j j+1}(I-1)+1$. Then
\begin{align*}
& 
  \sigma(I)=\sigma(I-1)
  +\dfrac{\tau_{K^{12}(I-1)+1}^{12}-T_{12}^{(M)}(\sigma(I-1))
  }{\dfrac{\lambda}{M} X_1^{(M)}(\sigma(I-1)) X_2^{(M)}(\sigma(I-1))}, 
  \\
& 
  \sigma(I)=\sigma(I-1)
  +\dfrac{\tau_{K^{23}(I-1)+1}^{23}-T_{23}^{(M)}(\sigma(I-1))
  }{\dfrac{\lambda}{M} X_2^{(M)}(\sigma(I-1)) X_3^{(M)}(\sigma(I-1))}, 
  \\
&
  \sigma(I)=\sigma(I-1)
  +\dfrac{\tau_{K^{31}(I-1)+1}^{31}-T_{31}^{(M)}(\sigma(I-1))
  }{\dfrac{\lambda}{M} X_3^{(M)}(\sigma(I-1)) X_1^{(M)}(\sigma(I-1))}.
\end{align*}
For the selected number $j=1,2,3$ the propositions $P_j(I-1, I)$ 
and $P_j(I)$ are proved similarly as in [Step 1].

Here we shall prove the existence of the solution of (\ref{eq2.2}). 
In [Case~A] the propositions $P_j(I-1, I)$ and $P_j(I)$ hold for 
$1 \leq j \leq 3$ in each case. The proposition $P_j(I)$ leads
\begin{align*}
  \sum_{i=1}^{K^{j j+1}(I)} 1 
  = K^{j j+1}(I)=N_{j j+1}\left(T_{j j+1}^{(M)}(\sigma(I))\right),
\end{align*}
and $P_j(I-1, I)$ leads, for $\sigma(I-1)<t<\sigma(I)$,
\begin{align*}
  \sum_{i=1}^{K^{j j+1}(I-1)} 1 
  = K^{j j+1}(I-1)=N_{j j+1}\left(T_{j j+1}^{(M)}(t)\right).
\end{align*}
Thus for any $t$, $\sigma(I-1)<t<\sigma(I)$, (\ref{eq2.5}) is replaced by
\begin{align*}
  \left\{\renewcommand{\arraystretch}{1.5}
    \begin{array}{@{}l@{}}
      X_1^{(M)}(t)
      =X_1^{(M)}(0)+N_{12}\left(T_{12}^{(M)}(t)\right)
      -N_{31}\left(T_{31}^{(M)}(t)\right),   \\
      X_2^{(M)}(t)
      =X_2^{(M)}(0)+N_{23}\left(T_{23}^{(M)}(t)\right)
      -N_{12}\left(T_{12}^{(M)}(t)\right), 
      \\
      X_3^{(M)}(t) 
      =X_3^{(M)}(0)+N_{31}\left(T_{31}^{(M)}(t)\right)
      -N_{23}\left(T_{23}^{(M)}(t)\right),
    \end{array}\right.
\end{align*}
At $\sigma(I)$, (\ref{eq2.6}) is replaced by
\begin{align*}
  \left\{\renewcommand{\arraystretch}{1.5}
    \begin{array}{@{}l@{}}
      X_1^{(M)}(\sigma(I))=X_1^{(M)}(0)+N_{12}\left(T_{12}^{(M)}(\sigma(I))\right)
      -N_{31}\left(T_{31}^{(M)}(\sigma(I))\right), 
      \\
      X_2^{(M)}(\sigma(I))
      =X_2^{(M)}(0)+N_{23}\left(T_{23}^{(M)}(\sigma(I))\right)
      -N_{12}\left(T_{12}^{(M)}(\sigma(I))\right), 
      \\
      X_3^{(M)}(\sigma(I))
      =X_3^{(M)}(0)+N_{31}\left(T_{31}^{(M)}(\sigma(I))\right)
      -N_{23}\left(T_{23}^{(M)}(\sigma(I))\right).
    \end{array}\right.
\end{align*}

Consequently there exists a solution of the system of (\ref{eq2.2}) in [Case~A] 
and $I-1$ in (\ref{eq2.4}) is replaced by $I$ in (\ref{eq2.6}).

\noindent
[Case B] We consider the case of $X_{j-1}^{(M)}(\sigma(l))>0$, 
$X_j^{(M)}\left(\sigma\left(l'\right)\right)>0$, 
$X_j^{(M)}\left(\sigma\left(l''\right)\right)=0$ 
and 
$X_{j+1}^{(M)}(\sigma(l))>0$ for $0 \leq l \leq I-1$, $0 \leq l'<k$ 
and $k \leq l'' \leq I-1(0 \leq k \leq I-1)$.
This is the case that the value of one of the random variables has come to zero 
and kept zero in $[\sigma(k), \sigma(I-1)]$. 
For example we prove in the case of $j=2$.

In this case $K^{12}(k)=\cdots=K^{12}(I-1)$ and $K^{23}(k)=\cdots=K^{23}(I-1)$ 
is implicitly assumed. It follows that $X_2^{(M)}(t)=0$ for any 
$t \in[\sigma(k), \sigma(I-1)]$. 
Thus $T_{12}^{(M)}(\sigma(k))=\cdots=T_{12}^{(M)}(\sigma(I-1))=T_{12}^{(M)}(t)$ 
and $T_{23}^{(M)}(\sigma(k))=\cdots=T_{23}^{(M)}(\sigma(I-1))=T_{23}^{(M)}(t)$ 
for any $t \in[\sigma(k), \sigma(I-1)]$. 
In addition to this, $\tau_{K^{12}(k)}^{12}=\cdots=\tau_{K^{12}(I-1)}^{12}$ 
and $\tau_{K^{23}(k)+1}^{23}=\cdots=\tau_{K^{23}(I-1)+1}^{23}$ hold.

Considering $\sigma(I-1)+\frac{\tau_{K^{j j+1}(I-1)+1}^{j j+1}
-T_{j j+1}^{(M)}(\sigma(I-1))}{\frac{\lambda}{M} X_j^{(M)}(\sigma(I-1)) 
X_{j+1}^{(M)}(\sigma(I-1))}$ in the minimum of (\ref{eq2.7}), 
the denominators are zero in [Case~B] and the numerators are positive because of 
$P_j(I-1)$ for $j=1,2$. 
Thus we replace these two terms of $j=1,2$ by infinity in the minimum for [Case~B]. 
We determine $\sigma(I)$ as
\begin{align*}
\sigma(I) 
  & =\min \left\{\sigma(I-1)
    +\dfrac{\tau_{K^{31}(I-1)+1}^{31}-T_{31}^{(M)}(\sigma(I-1))
    }{\dfrac{\lambda}{M} X_3^{(M)}(\sigma(I-1)) X_1^{(M)}(\sigma(I-1))}, 
    \infty, \infty\right\} \\
  & =\sigma(I-1)
    +\dfrac{\tau_{K^{31}(I-1)+1}^{31}-T_{31}^{(M)}(\sigma(I-1))
    }{\dfrac{\lambda}{M} X_3^{(M)}(\sigma(I-1)) X_1^{(M)}(\sigma(I-1))} .
\end{align*}
By taking the minimum of $1 \leq j \leq 3$, we count up one in $K^{j j+1}(I)$ 
for the selected number $j=3$ and we do not count up one for the not selected number 
$j=1,2$. 
We have $K^{31}(I)=K^{31}(I-1)+1$, $K^{12}(I)=K^{12}(I-1)$ 
and $K^{23}(I)=K^{23}(I-1)$. 
Thus the implicit assumption is satisfied to $I$-th step.

By $P_3(I-1)$ the numerator is positive and $\sigma(I)>\sigma(I-1)$. We have
\begin{align*}
  \tau_{K^{31}(I-1)+1}^{31}
    =T_{31}^{(M)}(\sigma(I-1))+\frac{\lambda}{M} X_3^{(M)}
    (\sigma(I-1)) X_1^{(M)}(\sigma(I-1))(\sigma(I)-\sigma(I-1)) .
\end{align*}

Similarly as in [Step 1] in [Case A], 
$P_3(I-1, I)$ and $P_3(I)$ hold for the selected number $j=3$.

\noindent
[Step 3] We consider the propositions for the not selected number $j=1$.

In [Case B] for $\sigma(I-1)<t<\sigma(I)$ we have
\begin{align*}
  T_{12}^{(M)}(t)
    =\int_0^t X_1^{(M)}(s) X_2^{(M)}(s) d s=T_{12}^{(M)}(\sigma(I-1)),
\end{align*}
and
\begin{align*}
  T_{12}^{(M)}(\sigma(I))
    =\int_0^{\sigma(I)} X_1^{(M)}(s) X_2^{(M)}(s) d s=T_{12}^{(M)}(\sigma(I-1)).
\end{align*}
From $P_1(I-1)$ it follows that
\begin{align*}
&  \tau_{K^{12}(I-1)}^{12} 
    \leq T_{12}^{(M)}(\sigma(I-1))
    =T_{12}^{(M)}(t)<\tau_{K^{12}(I-1)+1}^{12}. \\
&
   \tau_{K^{12}(I-1)}^{12}
    =\tau_{K^{12}(I)}^{12} \leq T_{12}^{(M)}(\sigma(I-1))
    =T_{12}^{(M)}(\sigma(I))<\tau_{K^{12}(I-1)+1}^{12}=\tau_{K^{12}(I)+1}^{12}.
\end{align*}
Therefore $P_1(I-1, I)$ and $P_1(I)$ hold. \hfill $\diamond$

For the not selected number $j=2$, $P_2(I-1, I)$ and $P_2(I)$ also hold.

In ($\sigma(I-1), \sigma(I)$), (\ref{eq2.5}) is replaced by 
(\ref{eq2.8}) and (\ref{eq2.6}) is also replaced by (\ref{eq2.8}) 
at $\sigma(I)$.

It follows that there exists a solution of the system in [Case~B] 
and $I-1$ in (\ref{eq2.4}) is replaced by $I$ in (\ref{eq2.6}).

\noindent
[Case C] We consider the case of $X_{j-1}^{(M)}(\sigma
(l'''))>0$, $X_{j-1}^{(M)}(\sigma(I-1))=0$, 
$X_j^{(M)}(\sigma(l'))>0$, $X_j^{(M)}(\sigma(l'))=0$ and 
$X_{j+1}^{(M)}(\sigma(l))>0$ for $0 \leq l \leq I-1$, 
$0 \leq l'<k$, $k \leq l'' \leq I-1$ and 
$0 \leq l'''<I-1$, $(0 \leq k<I-1)$. 
This is the first case in which the values of two of the random 
variables have come to zero at $\sigma(I-1)$, after several times 
of [Case~B]. For example we prove in the case of $j=2$.

By [Case B] we implicitly have $K^{12}(k)=\cdots=K^{12}(I-1)$ 
and $K^{23}(k)=\cdots= K^{23}(I-1)$. 
Thus for $t$, $t \in[\sigma(k), \sigma(I-1)]$, $X_2^{(M)}(t)=0$ 
and $T_{ii+1}^{(M)}(\sigma(I-1))=\cdots=T_{i i+1}^{(M)}(\sigma(k))
=T_{i i+1}^{(M)}(t)$ $(i=1,2)$.

Considering $\sigma(I-1)+\frac{\tau_{K^{j j+1}(I-1)+1}^{j j+1}-T_{j j+1}^{(M)}(\sigma(I-1))}{\frac{\lambda}{M} X_j^{(M)}(\sigma(I-1)) X_{j+1}^{(M)}(\sigma(I-1))}$
in the minimum of (\ref{eq2.7}), all denominators are zero 
in [Case~C] and all numerators are positive because of $P_j(I-$ 1) 
for $j=1,2,3$. Thus we replace all terms by infinity in (\ref{eq2.7}) 
for the present case. We determine $\sigma(I)$ as
\begin{align*}
  \sigma(I) & =\min \{\infty, \infty, \infty\} \\
            & =\infty.
\end{align*}
Thus we do not need the solution of the system at $\sigma(I)=\infty$.

\noindent
[Step 4] We consider propositions for $j=1,2,3$.

For $t$, $\sigma(I-1)<t<\sigma(I)=\infty$, we have $(j=1,2,3)$
\begin{align*}
  T_{j j+1}^{(M)}(t)
    =\int_0^t X_j^{(M)}(s) X_{j+1}^{(M)}(s) ds
    =T_{j j+1}^{(M)}(\sigma(I-1)),
\end{align*}
From $P_j(I-1)$ it follows that
\begin{align*}
  \tau_{K^{j j+1}(I-1)}^{12} 
    \leq T_{j j+1}^{(M)}(\sigma(I-1))
    =T_{j j+1}^{(M)}(t) < \tau_{K^{j j+1}(I-1)+1}^{j j+1},
\end{align*}
Therefore $P_j(I-1, I)$ for $1 \leq j \leq 3$ hold.
\hfill $\diamondsuit$

In ($\sigma(I-1), \infty$), (\ref{eq2.5}) is replaced 
by (\ref{eq2.8}).

Consequently we have a solution in [Case C].

By mathematical induction there exists a solution of the system 
of (\ref{eq2.2}) in $[0, \infty)$.

Now we shall prove that the solution constructed above is unique.

Each random variable $X_j^{(M)}(t)$ has a non-negative initial value. 
In the neighborhood of $t=0$ we see that for $j=1,2,3$,
\begin{align*}
  \frac{\lambda}{M} \int_0^t X_j^{(M)}(s) X_{j+1}^{(M)}(s) d s
    =\frac{\lambda}{M} X_j^{(M)}(0) X_{j+1}^{(M)}(0) t \geq 0.
\end{align*}
Thus the integrals are monotonously non-decreasing in the neighborhood 
of $t=0$. 
Each random variable $X_j^{(M)}(t)$ is integer valued $(j=1,2,3)$. 
If one of the random variables is negative valued after several jumps 
of the system from the non-negative initial value, 
it goes through the value zero. 
We see that the random variables $X_j^{(M)}(*)$ $(1 \leq j \leq 3)$ 
are non-negative by the following claim.

Put $X_k^{(M)}(t)$ $(1 \leq k \leq 3)$ to be a solution of the system 
of (\ref{eq2.2}). 
We claim that when $X_{j-1}^{(M)}(t) \geq 0$ $X_{j+1}^{(M)}(t) \geq 0$ 
and $X_j^{(M)}(t)=0$ for some $t \in(0, \infty)$ and for some 
$j \in\{1,2,3\}$, $X_j^{(M)}(s)=0$ holds for any $s \geq t$.

We set $u$, $u>t$, to be the first jump time of both 
$N_{j-1 j}(\frac{\lambda}{M} \int_0^* X_{j-1}^{(M)}(s) 
X_j^{(M)}(s) d s)$ and $N_{j j+1}(\frac{\lambda}{M} 
\int_0^* X_j^{(M)}(s) X_{j+1}^{(M)}(s) d s)$. 
Then it follows that $X_j^{(M)}(s)=0$ for any $s$, $t \leq s<u$. 
Since $\frac{\lambda}{M} \int_0^s X_{j-1}^{(M)}(s) X_j^{(M)}(s) d s$ 
and $\frac{\lambda}{M} \int_0^s X_j^{(M)}(s) X_{j+1}^{(M)}(s) d s$ 
are continuous,
\begin{align*}
& 
  \frac{\lambda}{M} \int_0^u X_{j-1}^{(M)}(s) X_j^{(M)}(s) d s
    =\frac{\lambda}{M} \int_0^t X_{j-1}^{(M)}(s) X_j^{(M)}(s) d s, 
    \\
& 
  \frac{\lambda}{M} \int_0^u X_j^{(M)}(s) X_{j+1}^{(M)}(s) d s
    =\frac{\lambda}{M} \int_0^t X_j^{(M)}(s) X_{j+1}^{(M)}(s) d s.
\end{align*}
Therefore we have
\begin{align*}
& 
  N_{j-1 j}\left(\frac{\lambda}{M} 
  \int_0^u X_{j-1}^{(M)}(s) X_j^{(M)}(s) d s\right)
    =N_{j-1 j}\left(\frac{\lambda}{M} 
      \int_0^t X_{j-1}^{(M)}(s) X_j^{(M)}(s) d s\right),
  \\
&
  N_{j j+1}\left(\frac{\lambda}{M} 
  \int_0^u X_j^{(M)}(s) X_{j+1}^{(M)}(s) d s\right)
    =N_{j j+1}\left(\frac{\lambda}{M} 
      \int_0^t X_j^{(M)}(s) X_{j+1}^{(M)}(s) d s\right).
\end{align*}
This is contradiction. Therefore the claim holds.\hfill $\sharp$

The random variables $X_j^{(M)}(*)$ are non-negative, 
bounded and integer valued in $[0, M]$ for $1 \leq j \leq 3$. 
The integrals $T_{j j+1}^{(M)}(t)$ $(1 \leq j \leq 3)$ are non-negative 
and monotonously non-decreasing. And $T_{j j+1}^{(M)}(t)$ are bounded for 
$1 \leq j \leq 3$. It follows that all possible classifications are 
covered in the following proof.

We shall prove uniqueness of the solution of the system by mathematical 
induction.

The initial value of the random variables $X_j^{(M)}(*)$ 
$(1 \leq j \leq 3)$ are given. 
At $\sigma(0)=0$ there exists a unique solution.

In $[0, \sigma(I-1)]$ we assume that there exists a unique solution of 
the system of (\ref{eq2.2}) and that the solution coincides with the 
solution constructed actually in the proof of existence the system of 
(\ref{eq2.2}). 
Note that the propositions hold in $[0, \sigma(I-1)]$.

Whenever monotonously non-decreasing $\frac{\lambda}{M} 
\int_0^t X_k^{(M)}(s) X_{k+1}^{(M)}(s) d s$ comes to the jump time of the 
Poisson process, the random variable $X_k^{(M)}(t)$ increases in the 
width of one and the random variable $X_{k+1}^{(M)}(t)$ decreases in the 
width of one $(1 \leq k \leq 3$). 
We trace the time and search the next jump time from $\sigma(I-1)$. 
As $\frac{\lambda}{M} \int_0^t X_k^{(M)}(s) X_{k+1}^{(M)}(s) d s$ 
are monotonously non-decreasing ($1 \leq k \leq 3$), 
the system has a change of the previous state at $s(I)$ such that
\begin{align*}
  s(I)=\min _{1 \leq j \leq 3}
    \left\{\inf \left\{t>\sigma(I-1): 
      T_{j j+1}^{(M)}(t)
      =\tau_{K^{j j+1}(I-1)+1}^{j j+1}\right\}\right\},
\end{align*}
where we set for $1 \leq j \leq 3$
\begin{align*}
  T_{j j+1}^{(M)}(t)
    =\frac{\lambda}{M} 
      \int_0^t X_j^{(M)}(s) X_{j+1}^{(M)}(s) d s.
\end{align*}

As we shall search the next jump, it follows that
\begin{align*}
& 
  \inf \left\{t>\sigma(I-1): 
    T_{j j+1}^{(M)}(t)=\tau_{K^{j j+1}(I-1)+1}^{j j+1}\right\}\\
&\quad
  = \inf \Bigg\{t>\sigma(I-1)
  : \frac{\lambda}{M} X_j^{(M)}(\sigma(I-1)) 
    X_{j+1}^{(M)}(\sigma(I-1))(t-\sigma(I-1))  \\
&\quad 
  =\tau_{K^{j j+1}(I-1)+1}^{j j+1} 
  - T_{j j+1}^{(M)}(\sigma(I-1))\Bigg\}.
\end{align*}

\noindent
[Case a] We consider the case of $X_j^{(M)}(\sigma(l))>0$ 
for $0 \leq l \leq I-1$ and $1 \leq j \leq 3$.

Since
\begin{align*}
& 
  \inf \left\{t>\sigma(I-1): 
    T_{j j+1}^{(M)}(t)=\tau_{K^{j j+1}(I-1)+1}^{j j+1}\right\} 
    \\
&\quad 
  =\inf \left\{t>\sigma(I-1): t 
  =\sigma(I-1)+\frac{\tau_{K^{j j+1}(I-1)+1}^{j j+1} 
    - T_{j j+1}^{(M)}(\sigma(I-1))
    }{\dfrac{\lambda}{M} X_j^{(M)}(\sigma(I-1)) 
    X_{j+1}^{(M)}(\sigma(I-1))}\right\},
\end{align*}
we have
\begin{align*}
  s(I)=\min _{1 \leq j \leq 3}
  \left\{\sigma(I-1)+\frac{\tau_{K^{j j+1}(I-1)+1}^{j j+1}
  - T_{j j+1}^{(M)}(\sigma(I-1))
  }{\dfrac{\lambda}{M} X_j^{(M)}(\sigma(I-1)) 
  X_{j+1}^{(M)}(\sigma(I-1))}\right\}.
\end{align*}

\noindent
[Case b] We consider the case of $X_{j-1}^{(M)}(\sigma(l))>0$, 
$X_j^{(M)}(\sigma(l'))>0$, $X_j^{(M)}(\sigma(l''))=0$ and 
$X_{j+1}^{(M)}(\sigma(l))>0$ for $0 \leq l \leq I-1$, 
$0 \leq l'<k$ and $k \leq l'' \leq I-1$ $(0 \leq k \leq I-1)$. 
For example we prove in the case of $j=2$.

We have $T_{j j+1}^{(M)}(t)=T_{j j+1}^{(M)}(\sigma(I-1))
<\tau_{K^{j j+1}(I-1)+1}^{j j+1}$ for 
$t>\sigma(I-1)$ by $P_j(I-1)$ $(j=1,2)$. 
Thus
\begin{align*}
  \left\{t>\sigma(I-1): 
    T_{j j+1}^{(M)}(t)
    =\tau_{K^{j j+1}(I-1)+1}^{j j+1}\right\}=\emptyset.
\end{align*}

It follows that
\begin{align*}
  s(I)=\min \left\{\sigma(I-1)
  +\frac{\tau_{K^{31}(I-1)+1}^{31} - T_{31}^{(M)}(\sigma(I-1))
  }{\dfrac{\lambda}{M} X_3^{(M)}(\sigma(I-1)) 
   X_1^{(M)}(\sigma(I-1))}, \infty, \infty\right\},
\end{align*}
where we put $\inf \emptyset=\infty$.

\noindent
[Case c] We consider the case of $X_{j-1}^{(M)}(\sigma(l'''))>0$, 
$X_{j-1}^{(M)}(\sigma(I-1))=0$, $X_j^{(M)}(\sigma(l'))>0$, 
$X_j^{(M)}(\sigma(l'))=0$ and $X_{j+1}^{(M)}(\sigma(l))>0$ 
for $0 \leq l \leq I-1$, $0 \leq l'<k$, $k \leq l'' \leq I-1$ 
and $0 \leq l'''<I-1$ $(0 \leq k \leq I-1)$. 
For example we prove in the case of $j=2$.

From $P_j(I-1)$ it follows that for $j=1,2,3$,
\begin{align*}
  \left\{t>\sigma(I-1): T_{j j+1}^{(M)}(t)
    =\tau_{K^{j j+1}(I-1)+1}^{j j+1}\right\}=\emptyset.
\end{align*}

Thus we have
\begin{align*}
  s(I)=\min \{\infty, \infty, \infty\}.
\end{align*}

The jump time $\sigma(I)$ constructed in [Case A]$\sim$[Case C] of 
the proof of existence of the system of (\ref{eq2.2}) coincides with 
$s(I)$ of [Case~a] $\sim$ [Case~c]. 
There are no methods to construct the solution of the system of 
(\ref{eq2.2}) except the construction of the proof of existence of 
a solution, since $\frac{\lambda}{M} \int_0^t X_j^{(M)}(s) 
X_{j+1}^{(M)}(s) d s$ $(1 \leq j \leq 3)$ are monotonously non-decreasing.

Moreover $\sigma(I)$ is determined by $(\sigma(l), X^{(M)}(\sigma(l)))
_{0 \leq l \leq I-1}$ and by the jump times of standard Poisson processes. Thus the constructed solution is unique.

In $(\sigma(I-1), \sigma(I)]$ there exists a unique solution 
and it coincides with the solution constructed actually in the proof 
of existence of a solution.

By mathematical induction we prove that there exists a unique solution 
in $[0, \infty)$. \hfill $\square$
\end{proof}
\begin{coro}
  \label{coro2.1}
  There exists a unique solution of equation~{\normalfont (\ref{eq2.2})}, 
  when $t \in\left[0, t_0\right)$ for $t_0 \in[0, \infty)$.
\end{coro}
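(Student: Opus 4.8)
The plan is to deduce the corollary directly from Theorem~\ref{theo2.1} by restriction, using the fact that the construction there is carried out step by step in time and is therefore ``causal''. If $t_0=0$ the assertion is vacuous, so assume $t_0>0$.

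For existence, I would take the unique solution $X^{(M)}(\cdot)$ of (\ref{eq2.2}) on $[0,\infty)$ furnished by Theorem~\ref{theo2.1} and simply restrict it to $[0,t_0)$. Since the system (\ref{eq2.2}) is a pathwise identity holding at every time $t$, the restriction is automatically a solution of (\ref{eq2.2}) for $t\in[0,t_0)$; no extra argument is needed.

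For uniqueness, I would re-run the inductive scheme of the proof of Theorem~\ref{theo2.1}, but only over the steps $\sigma(l)$ with $\sigma(l-1)<t_0$. The crucial point --- already the heart of the uniqueness argument there --- is that each $T_{jj+1}^{(M)}(t)=\frac{\lambda}{M}\int_0^t X_j^{(M)}(s)X_{j+1}^{(M)}(s)\,ds$ is continuous and monotonously non-decreasing, so that once $(\sigma(l'),X^{(M)}(\sigma(l')))_{0\le l'\le l-1}$ and the jump times of the standard Poisson processes are fixed, the next jump time $\sigma(l)$ (equivalently the $s(I)$ of [Case~a]$\sim$[Case~c]) and the values of the solution on $(\sigma(l-1),\sigma(l)]$ are forced by (\ref{eq2.5})--(\ref{eq2.7}) and their [Case~B]/[Case~C] analogues, with no other admissible choice. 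Hence, by induction on $l$, any solution on $[0,t_0)$ agrees with $X^{(M)}(\cdot)$ on each $(\sigma(l-1),\sigma(l)]\cap[0,t_0)$; since these intervals together with $\{0\}$ cover $[0,t_0)$ and the initial value $X^{(M)}(0)$ is prescribed, the two solutions coincide on $[0,t_0)$.

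The only thing to check with care is that this construction is genuinely causal: the determination of the solution on $[0,t_0)$ never refers to $X^{(M)}(s)$ for $s\ge t_0$, which is immediate from the recursion, and this is exactly why the global argument of Theorem~\ref{theo2.1} localizes to $[0,t_0)$ verbatim. I do not expect any real obstacle; in essence the corollary merely records that the existence and uniqueness proved on $[0,\infty)$ already holds on every initial segment.
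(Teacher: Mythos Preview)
Your proposal is correct and follows essentially the same approach as the paper: the paper's own proof simply notes that the existence and uniqueness arguments of Theorem~\ref{theo2.1} proceed step by step, and one stops the induction once the step exceeds $t_0$. Your write-up is a more detailed elaboration of exactly this idea, with the additional (harmless) observation that existence on $[0,t_0)$ can alternatively be obtained by restricting the global solution.
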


\begin{proof}\normalfont
  The proof of existence and the proof of uniqueness of the system of 
  (\ref{eq2.2}) is done step by step. We stop the proof when the step 
  excess the time $t_0$. Then we have the present corollary.
  \hfill $\square$
\end{proof}

For any $v$, $v \geq 0$, we define
\begin{align*}
  N_{j j+1}^v(t)
  = \begin{cases}
    N_{j j+1}(t), & 0 \leq t \leq v, \\ 
    N_{j j+1}(v), & t>v.
  \end{cases}
\end{align*}
We consider the system $N_{12}$ replaced by $N_{12}^v$ 
in (\ref{eq2.2}). This system is
\begin{equation}
  \left\{\begin{aligned}
  X_1^{(M)}(t)=X_1^{(M)}(0) & +N_{12}^v\left(\frac{\lambda}{M} \int_0^t X_1^{(M)}(s) X_2^{(M)}(s) d s\right) \\
  & -N_{31}\left(\frac{\lambda}{M} \int_0^t X_3^{(M)}(s) X_1^{(M)}(s) d s\right), \\
  X_2^{(M)}(t)=X_2^{(M)}(0) & +N_{23}\left(\frac{\lambda}{M} \int_0^t X_2^{(M)}(s) X_3^{(M)}(s) d s\right) \\
  & -N_{12}^v\left(\frac{\lambda}{M} \int_0^t X_1^{(M)}(s) X_2^{(M)}(s) d s\right), \\
  X_3^{(M)}(t)=X_3^{(M)}(0) & +N_{31}\left(\frac{\lambda}{M} \int_0^t X_3^{(M)}(s) X_1^{(M)}(s) d s\right) \\
  & -N_{23}\left(\frac{\lambda}{M} \int_0^t X_2^{(M)}(s) X_3^{(M)}(s) d s\right), \\
  X_1^{(M)}(0)+X_2^{(M)}(0) & +X_3^{(M)}(0)=M.
  \end{aligned}\right.
  \label{eq2.8}
\end{equation}
We have the following theorem.

\begin{theo}\label{theo2.2}
  There exists a unique solution for the system of 
  {\normalfont(\ref{eq2.8})}. 
\end{theo}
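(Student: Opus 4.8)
The plan is to reduce the system~(\ref{eq2.8}) to an instance of the system~(\ref{eq2.2}) on successive time intervals, so that Theorem~\ref{theo2.1} and Corollary~\ref{coro2.1} can be invoked directly. The key observation is that $N_{12}^v$ agrees with $N_{12}$ on $[0,v]$ and is frozen afterward; since the time-change argument $\frac{\lambda}{M}\int_0^t X_1^{(M)}(s)X_2^{(M)}(s)\,ds$ is continuous, non-negative and non-decreasing (as established in the proof of Theorem~\ref{theo2.1}), the replacement of $N_{12}$ by $N_{12}^v$ only matters once that integral exceeds $v$. So I would first treat the ``pre-$v$'' regime and then the ``post-$v$'' regime.

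First, define the stopping rule $t_\ast=\inf\{t\ge 0: \frac{\lambda}{M}\int_0^t X_1^{(M)}(s)X_2^{(M)}(s)\,ds \ge v\}$. On $[0,t_\ast)$ the two systems~(\ref{eq2.8}) and~(\ref{eq2.2}) are literally identical, because $N_{12}^v\big(\frac{\lambda}{M}\int_0^t X_1^{(M)}X_2^{(M)}\big)=N_{12}\big(\frac{\lambda}{M}\int_0^t X_1^{(M)}X_2^{(M)}\big)$ there. Hence by Corollary~\ref{coro2.1} there is a unique solution of~(\ref{eq2.8}) on $[0,t_\ast)$, namely the restriction of the unique solution of~(\ref{eq2.2}) furnished by Theorem~\ref{theo2.1}. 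If $t_\ast=\infty$ we are done. Otherwise, by the continuity and monotonicity of the integral, at $t=t_\ast$ the quantity $\frac{\lambda}{M}\int_0^{t}X_1^{(M)}X_2^{(M)}$ equals exactly $v$, and for every $t\ge t_\ast$ we have $N_{12}^v\big(\frac{\lambda}{M}\int_0^t X_1^{(M)}X_2^{(M)}\big)=N_{12}(v)$, a constant.

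Second, on $[t_\ast,\infty)$ the $N_{12}^v$-term in the first and second equations of~(\ref{eq2.8}) is frozen at the constant $N_{12}(v)$, so the system there reads as~(\ref{eq2.2}) but with the jump mechanism $N_{12}$ suppressed — equivalently, it is a paper-scissors-stone system on two active collision channels ($N_{23}$ and $N_{31}$) with shifted initial data $X^{(M)}(t_\ast)$. The same step-by-step construction as in the proof of Theorem~\ref{theo2.1} applies verbatim, with the minimum in~(\ref{eq2.7}) taken only over the indices $j=2,3$ (formally, the $j=1$ term is replaced by $+\infty$, exactly as in [Case~B] of that proof), and the same non-negativity claim keeps all $X_j^{(M)}$ in $[0,M]$. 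Existence and uniqueness on $[t_\ast,\infty)$ then follow by the identical mathematical-induction argument. Splicing the two pieces at $t_\ast$ yields a global solution of~(\ref{eq2.8}), and uniqueness is inherited on each piece, hence globally.

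The only delicate point — and the step I would write out most carefully — is the behaviour exactly at $t=t_\ast$: one must check that the solution does not jump via the $N_{12}$-channel precisely at the instant the time-change argument hits $v$, so that the value $N_{12}(v)$ used for $t\ge t_\ast$ is unambiguous and the pasted function is a genuine solution of~(\ref{eq2.8}) there. This is handled by the continuity of $\frac{\lambda}{M}\int_0^\cdot X_1^{(M)}X_2^{(M)}$ together with the fact that $v$ is fixed in advance (so, generically, $v$ is not one of the jump times $\tau_i^{12}$; and even if it is, $N_{12}^v$ is right-continuous with the frozen value, which is consistent with the convention in~(\ref{eq2.2})). Everything else is a routine transcription of the arguments already given for Theorems~\ref{theo2.1} and Corollary~\ref{coro2.1}.
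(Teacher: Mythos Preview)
Your reduction is correct and lands on the same mechanism as the paper: after the $N_{12}$ channel is frozen, the remaining dynamics are driven only by $N_{23}$ and $N_{31}$, and the step-by-step construction of Theorem~\ref{theo2.1} goes through with the $j=1$ term in the minimum~(\ref{eq2.7}) replaced by $+\infty$. The difference is purely organisational. The paper repeats the full inductive construction from scratch, splitting at the \emph{step} $I_0$ at which $T_{12}^{(M)}$ would reach $\tau_{K_v+1}^{12}$ in the original system~(\ref{eq2.2}), and then verifies an enlarged case list (Cases A$'$1, B$'$1, A$'$--D$'$) explicitly. You instead split at the \emph{time} $t_\ast$ where $T_{12}^{(M)}$ reaches $v$, invoke Corollary~\ref{coro2.1} on $[0,t_\ast)$, and then appeal to the two-channel version of the same induction. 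Your $t_\ast$ is in general earlier than the paper's branch point (since $v<\tau_{K_v+1}^{12}$ and $N_{12}$ is constant on $[v,\tau_{K_v+1}^{12})$), but this is harmless.

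Two small points worth tightening. First, your appeal to Corollary~\ref{coro2.1} is slightly circular as written, since $t_\ast$ is defined through the very solution whose existence you are proving; make explicit that you take $t_\ast$ from the Theorem~\ref{theo2.1} solution of~(\ref{eq2.2}), and for uniqueness argue that any solution $Y$ of~(\ref{eq2.8}) coincides with that solution on $[0,s_\ast)$ for its own $s_\ast$, whence $s_\ast=t_\ast$ by comparing the (common) integral. Second, your hedge ``generically, $v$ is not a jump time $\tau_i^{12}$'' is unnecessary: $N_{12}^v$ is c\`adl\`ag with $N_{12}^v(v)=N_{12}(v)$ in either case, so the value at $t_\ast$ and the pasting are unambiguous regardless. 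With these clarifications your argument is complete and is a cleaner packaging of what the paper does by explicit case analysis.
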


\begin{proof}\normalfont
  We fix a sample path. We use the same definition 
  in Theorem~\ref{theo2.1}.
\end{proof}

There exists an integer $K_v$, $K_v \geq 0$ such that 
$\tau_{K_v}^{12} \leq v<\tau_{K_v+1}^{12}$.

When for the fixed sample path the monotonously non-decreasing function 
$T_{12}^{(M)}(*)$ does not reach $\tau_{K_v+1}^{12}$, 
we prove the present theorem in just the same way as 
Theorem~\ref{theo2.1}.

We consider the case in the following way. 
There is the smallest integer $I_0$, $I_0 \geq 1$, 
which denotes the step, such that $K^{12}(I_0-1)=K_v$ and 
$K^{12}(I_0)=K_v+1$ in Theorem~\ref{theo2.1}, 
when $X_1^{(M)}(\sigma(l))>0$ and $X_2^{(M)}(\sigma(l))>0$ for 
$0 \leq l \leq I_0-1$. 
In this situation $I_0$ is the smallest integer of 
$T_{12}^{(M)}(\sigma(I_0))=\tau_{K_{v+1}}^{12}$.

We shall prove existence of the solution of the system by mathematical 
induction on $I$ $(I \geq I_0)$.

Note that the proof from $I_0-1$ to $I_0$ is slightly different from the 
proof from $I-1$ to $I$ in the classification of cases of the mathematical induction.

For the mathematical induction we assume the solution
\begin{equation}
  \left\{
    \begin{array}{@{}l@{}}
      X_1^{(M)}(\sigma(I_0-1))
        =X_1^{(M)}(0)+\dsum_{i=1}^{K^{12}(I_0-1)}(+1)
        +\dsum_{i=1}^{K^{31}(I_0-1)}(-1), \\
      X_2^{(M)}(\sigma(I_0-1))
        =X_2^{(M)}(0)+\dsum_{i=1}^{K^{23}(I_0-1)}(+1)
        +\dsum_{i=1}^{K^{12}(I_0-1)}(-1), \\
      X_3^{(M)}(\sigma(I_0-1))
        =X_3^{(M)}(0)+\dsum_{i=1}^{K^{31}(I_0-1)}(+1)
        +\dsum_{i=1}^{K^{23}(I_0-1)}(-1),
      \end{array}\right.
  \label{eq2.9}
\end{equation}
at $t=\sigma(I_0-1)$ with $P_j(I_0-1)$ $(j=1,2,3)$.

We construct the solution of the system of (\ref{eq2.8}) 
for $t \in(\sigma(I_0-1), \sigma(I_0))$ as
\begin{equation}
  \left\{
    \begin{array}{@{}l@{}}
      X_1^{(M)}(t)=X_1^{(M)}(0)
        +\dsum_{i=1}^{K^{12}(I_0-1)}(+1)
        +\dsum_{i=1}^{K^{31}(I_0-1)}(-1), \\
      X_2^{(M)}(t)=X_2^{(M)}(0)
        +\dsum_{i=1}^{K^{23}(I_0-1)}(+1)
        +\dsum_{i=1}^{K^{12}(I_0-1)}(-1), \\
      X_3^{(M)}(t)=X_3^{(M)}(0)
        +\dsum_{i=1}^{K^{31}(I_0-1)}(+1)
        +\dsum_{i=1}^{K^{23}(I_0-1)}(-1),
    \end{array}\right.
  \label{eq2.10}
\end{equation}
and at $t=\sigma(I_0)$ as
\begin{equation}
  \left\{\renewcommand{\arraystretch}{3}
    \begin{array}{@{}l@{}}
      X_1^{(M)}(\sigma(I_0))=X_1^{(M)}(0)
        +\dsum_{i=1}^{K^{12}(I_0)}(+1)
        +\dsum_{i=1}^{K^{31}(I_0)}(-1), \\
      X_2^{(M)}(\sigma(I_0))=X_2^{(M)}(0)
        +\dsum_{i=1}^{K^{23}(I_0)}(+1)
        +\dsum_{i=1}^{K^{12}(I_0)}(-1),\\
      X_3^{(M)}(\sigma(I_0))=X_3^{(M)}(0)
        +\dsum_{i=1}^{K^{31}(I_0)}(+1)+\dsum_{i=1}^{K^{23}(I_0)}(-1),
  \end{array}\right.
  \label{eq2.11}
\end{equation}
where $\sigma(I_0)$ and $K^{j j+1}(I_0)$ are as follows in [Case~A'1] 
and [Case~B'1].

\noindent
[Case A'1] We consider the case of $X_j^{(M)}(\sigma(l))>0$ for 
$0 \leq l \leq I_0-1$ and $1 \leq j \leq 3$. 
This case describes that all random variables have positive values 
from 0-th step to $I_0-1$-th step.

In Theorem~\ref{theo2.1} we have the jump time of (\ref{eq2.2}) as 
(\ref{eq2.7}). 
The standard Poisson process $N_{12}(*)$ in (\ref{eq2.2}) is replaced by 
$N_{12}^v(*)$ in (\ref{eq2.8}) in the present theorem. 
There are no jumps as to $N_{12}(*)$ after the $I_0$-th step 
and we have $K^{12}(I_0-1)=K^{12}(I_0)$. 
We replace $\tau_{K^{12}(I_0-1)+1}^{12}$ by infinity. 
In [Case~A1] we determine $\sigma(I_0)$ as
\begin{align*}
  \sigma(I_0)=\min _{2 \leq j \leq 3}
    \left\{\sigma(I_0-1)
      +\frac{\tau_{K^{j j+1}(I_0-1)+1 }^{j j+1}
      - T_{j j+1}^{(M)}(\sigma(I_0-1))
      }{\dfrac{\lambda}{M} X_j^{(M)}(\sigma(I_0-1)) 
        X_{j+1}^{(M)}(\sigma(I_0-1))}, \infty\right\}.
\end{align*}
By taking the minimum of $2 \leq j \leq 3$, we count up one in 
$K^{j j+1}(I_0)$ for the selected number and we do not count up one 
for the not selected number. If by taking the minimum of 
$2 \leq j \leq 3$ the number $j=2$ is selected, for example, 
then we have $K^{12}(I_0)=K^{12}(I_0-1)$, $K^{23}(I_0)=K^{23}(I_0-1)+1$ 
and $K^{31}(I_0)=K^{31}(I_0-1)$. 
If by taking the minimum of $2 \leq j \leq 3$ the numbers $j=2,3$ 
are selected, then we have $K^{12}(I_0)=K^{12}(I_0-1)$, 
$K^{23}(I_0)=K^{23}(I_0-1)+1$ and $K^{31}(I_0)=K^{31}(I_0-1)+1$.

If by taking the minimum of $2 \leq j \leq 3$ the number $j=2$ is 
selected, we have $K^{12}(I_0)=K^{12}(I_0-1)$, 
$K^{23}(I_0)=K^{23}(I_0-1)+1$ and $K^{31}(I_0)=K^{31}(I_0-1)$. 
Then
\begin{align*}
& 
  \sigma(I_0)=\sigma(I_0-1)
    +\frac{\tau_{K^{23}(I_0-1)+1}^{23}-T_{23}^{(M)}(\sigma(I_0-1))
    }{\dfrac{\lambda}{M} X_2^{(M)}(\sigma(I_0-1)) X_3^{(M)}
    (\sigma(I_0-1))}, \\
& 
  \sigma(I_0)<\sigma(I_0-1)
    +\frac{\tau_{K^{31}(I_0-1)+1}^{31}-T_{31}^{(M)}(\sigma(I_0-1))
    }{\dfrac{\lambda}{M} X_3^{(M)}(\sigma(I_0-1)) X_1^{(M)}
    (\sigma(I_0-1))} .
\end{align*}
By $P_j(I_0-1)$ for $j=2,3$ numerators are positive. 
Thus $\sigma(I_0)>\sigma(I_0-1)$ and
\begin{align*}
& 
  \tau_{K^{23}(I_0-1)+1}^{23}
    \!=\!T_{23}^{(M)}(\sigma(I_0\!-\!1))
    \!+\!\frac{\lambda}{M} X_{12}^{(M)}(\sigma(I_0\!-\!1)) X_3^{(M)}
    (\sigma(I_0\!-\!1))(\sigma(I_0)\!-\!\sigma(I_0-1)), \\
& 
  \tau_{K^{31}(I_0-1)+1}^{31}
    \!>\!T_{31}^{(M)}(\sigma(I_0\!-\!1))
    \!+\!\frac{\lambda}{M} X_3^{(M)}(\sigma(I_0\!-\!1)) X_1^{(M)}
    (\sigma(I_0\!-\!1))(\sigma(I_0)\!-\!\sigma(I_0\!-\!1)) .
\end{align*}
[Step 5] We consider the propositions for the number $j=1$.

We see that for $\sigma(I_0-1)<t<\sigma(I_0)$
\begin{align*}
  \tau_{K^{12}(I_0-1)}^{12} 
    \leq T_{12}^{(M)}(\sigma(I_0-1))
    <T_{12}^{(M)}(t)<T_{12}^{(M)}(\sigma(I_0))
    <\tau_{K^{12}(I_0-1)+1}^{12}=\infty.
\end{align*}
where
\begin{align*}
  T_{12}^{(M)}(t)
    =T_{12}^{(M)}(\sigma(I_0-1))
    +\frac{\lambda}{M} X_1^{(M)}(\sigma(I_0-1)) X_2^{(M)}
    (\sigma(I_0-1))(t-\sigma(I_0-1))
\end{align*}
and
\begin{align*}
  T_{12}^{(M)}(\sigma(I_0))
    \!=\!T_{12}^{(M)}(\sigma(I_0\!-\!1))
      \!+\!\frac{\lambda}{M} X_1^{(M)}(\sigma(I_0\!-\!1)) X_2^{(M)}
      (\sigma(I_0\!-\!1))(\sigma(I_0)\!-\!\sigma(I_0\!-\!1))
\end{align*}
This leads
\begin{align*}
&
  \tau_{K^{12}(I_0-1)}^{12} 
    \leq T_{12}^{(M)}(\sigma(I_0-1))
    <T_{12}^{(M)}(t)<\tau_{K^{12}(I_0-1)+1}^{12}=\infty, \\
&  
  \tau_{K^{12}(I_0-1)}^{12}
    \!=\!\tau_{K^{12}(I_0)}^{12} \!\leq\! T_{12}^{(M)}(\sigma(I_0\!-\!1))
    \!<\!T_{12}^{(M)}(\sigma(I_0))\!<\!\tau_{K^{12}(I_0-1)+1}^{12}
    \!=\!\tau_{K^{12}(I_0)+1}^{12}\!=\!\infty.
\end{align*}
Therefore $P_1(I_0-1, I)$ and $P_1(I_0)$ hold.\hfill $\diamondsuit$

For the selected number $j=2$, $P_2(I_0-1, I_0)$ and 
$P_2(I_0)$ hold similarly as in [Step~1] of [Case~A] 
in Theorem~\ref{theo2.1}. 
For the not selected number $j=3$, similarly as in [Step~2] 
of [Case~A] in Theorem~\ref{theo2.1}, 
$P_3(I_0-1, I_0)$ and $P_3(I_0)$ hold.

If by taking the minimum of $2 \leq j \leq 3$ the numbers $j=2,3$ 
are selected, we have $K^{12}(I_0)=K^{12}(I_0-1)$, 
$K^{23}(I_0)=K^{23}(I_0-1)+1$ and $K^{31}(I_0)=K^{31}(I_0-1)+1$. 

\noindent
And
\begin{align*}
&
  \sigma(I_0) = \sigma(I_0-1)
  +\frac{\tau_{K^{23}(I_0-1)+1}^{23}-T_{23}^{(M)}(\sigma(I_0-1))
  }{\dfrac{\lambda}{M} X_2^{(M)}(\sigma(I_0-1)) X_3^{(M)}
  (\sigma(I_0-1))}, \\
&
  \sigma(I) =\sigma(I_0-1)
  +\frac{\tau_{K^{31}(I_0-1)+1}^{31}-T_{31}^{(M)}(\sigma(I_0-1))
  }{\dfrac{\lambda}{M} X_3^{(M)}(\sigma(I_0-1)) X_1^{(M)}
  (\sigma(I_0-1))}.
\end{align*}

$P_1(I_0-1, I_0)$ and $P_1(I_0)$ hold in a similar way as [Step~5]. 
For the selected number $j=2,3$, similarly as in [Step~1] of [Case~A] 
in Theorem~\ref{theo2.1}, $P_j(I_0-1, I_0)$ 
and $P_j(I_0)$ hold for $2 \leq j \leq 3$.

Note that the proposition $P_1(I_0)$ leads
\begin{align*}
  \sum_{i=1}^{K^{12}(I_0)} 1
    =K^{12}(I_0)=K^{12}(I_0-1)=N_{12}^v
    \left(T_{12}^{(M)}(\sigma(I_0))\right),
\end{align*}
and that for $\sigma(I_0-1)<t<\sigma(I_0)$ the proposition 
$P_1(I_0-1, I_0)$ leads
\begin{align*}
  \sum_{i=1}^{K^{12}(I_0-1)} 1=K^{12}(I_0-1)
  =N_{12}^v\left(T_{12}^{(M)}(t)\right).
\end{align*}
For $\sigma(I_0-1)<t<\sigma(I_0)$, (\ref{eq2.10}) is replaced by
\begin{equation}
  \left\{\renewcommand{\arraystretch}{1.5}
  \begin{array}{@{}l@{}}
  X_1^{(M)}(t)
    =X_1^{(M)}(0)+N_{12}^v
    \left(T_{12}^{(M)}(t)\right)-N_{31}\left(T_{31}^{(M)}(t)\right),
    \\
  X_2^{(M)}(t)
    =X_2^{(M)}(0)+N_{23}
    \left(T_{23}^{(M)}(t)\right)-N_{12}^v\left(T_{12}^{(M)}(t)\right),
    \\
  X_3^{(M)}(t)
    =X_3^{(M)}(0)+N_{31}
    \left(T_{31}^{(M)}(t)\right)-N_{23}\left(T_{23}^{(M)}(t)\right).
  \end{array}\right.
  \label{eq2.12}
\end{equation}
At $\sigma(I_0)$, (\ref{eq2.11}) is replaced by
\begin{align*}
  \left\{\renewcommand{\arraystretch}{1.5}
    \begin{array}{@{}l@{}}
      X_1^{(M)}(\sigma(I_0))
        =X_1^{(M)}(0)+N_{12}^v
        \left(T_{12}^{(M)}(\sigma(I_0))\right)
        -N_{31}\left(T_{31}^{(M)}(\sigma(I_0))\right), \\
      X_2^{(M)}(\sigma(I_0))
        =X_2^{(M)}(0)+N_{23}\left(T_{23}^{(M)}(\sigma(I_0))\right)
        -N_{12}^v\left(T_{12}^{(M)}(\sigma(I_0))\right),\\
      X_3^{(M)}(\sigma(I_0))
        =X_3^{(M)}(0)+N_{31}\left(T_{31}^{(M)}(\sigma(I_0))\right)
        -N_{23}\left(T_{23}^{(M)}(\sigma(I_0))\right).
  \end{array}\right.
\end{align*}

Consequently there exists a solution of the system of (\ref{eq2.8}) in [Case~A'1] 
and $I_0-1$ in (\ref{eq2.9}) is replaced by $I_0$ in (\ref{eq2.11}).

\noindent
[Case B'1] We consider the case of $X_1^{(M)}(\sigma(l))>0$, 
$X_2^{(M)}(\sigma(l))>0$, $X_3^{(M)}(\sigma(l'))>$ 0 and 
$X_3^{(M)}(\sigma(l''))=0$ for $0 \leq l \leq I_0-1$, $0 \leq l'<k$, 
and $k \leq l'' \leq I_0-1$ $(0 \leq k \leq I_0-1)$. 
In this case the value of the random variable of species 3 has reached zero 
until $I_0-1$-th step after several times of [Case~B] 
in Theorem~\ref{theo2.1}.

For $j=2,3$, as to $\sigma(I_0-1)+\frac{\tau_{K^{jj+1} (I_0-1)+1}
^{jj +1} -T_{j j+1} ^{(M)}(\sigma(I_0-1))
}{\frac{\lambda}{M} X_j^{(M)}(\sigma(I_0-1)) X_{j+1}^{(M)}(\sigma(I_0-1))}$ 
the denominators are zero in [Case~B'1] and the 
numerators are positive because of $P_j(I_0-1)$. Thus we replace these terms by 
infinity just the same way in [Case~B] in Theorem~\ref{theo2.1}. 
We replace $\tau_{K^{12}(I_0-1)+1}^{12}$ by infinity just similarly as 
in [Case~A'1]. We determine $\sigma(I_0)$ as
\begin{align*}
  \sigma(I_0) &= \min \{\infty, \infty, \infty\} \\
   & =\infty.
\end{align*}
Thus we do not need the solution of the system at $\sigma(I_0)=\infty$.

\noindent
[Step 6] We consider the proposition for the number $j=1$.

Considering $P_1(I_0)$ for $\sigma(I_0-1)<t<\sigma(I_0)=\infty$
\begin{align*}
  \tau_{K^{12}(I_0-1)}^{12} 
    \leq T_{12}^{(M)}(t)
      <\tau_{K^{12}(I_0-1)+1}^{12}=\infty .
\end{align*}
Thus $P_1(I_0-1, I_0)$ holds.\hfill $\diamond$

Similarly as in [Step 4] in Theorem~\ref{theo2.1} $P_j(I_0-1, I_0)$ hold for 
$j=2,3$.

In ($\sigma(I_0-1), \infty$) (\ref{eq2.10}) is replaced by (\ref{eq2.12}).

It follows that there exists a solution of the system of (\ref{eq2.8}) 
in [Case~B'1].

For the mathematical induction from $I-1$ to $I$ we assume the solution
\begin{equation}
  \left\{\renewcommand{\arraystretch}{3}
    \begin{array}{@{}l@{}}
      X_1^{(M)}(\sigma(I-1))
        =X_1^{(M)}(0)+\dsum_{i=1}^{K^{12}(I-1)}(+1)+\dsum_{i=1}^{K^{31}(I-1)}(-1),
        \\
      X_2^{(M)}(\sigma(I-1))
        =X_2^{(M)}(0)+\dsum_{i=1}^{K^{23}(I-1)}(+1)+\dsum_{i=1}^{K^{12}(I-1)}(-1), 
        \\
      X_3^{(M)}(\sigma(I-1))
        =X_3^{(M)}(0)+\dsum_{i=1}^{K^{31}(I-1)}(+1)+\dsum_{i=1}^{K^{23}(I-1)}(-1),
  \end{array}\right.
  \label{eq2.13}
\end{equation}
with the propositions $P_j(I-1)$ $(j=1,2,3)$.

We construct the solution for $t \in(\sigma(I-1), \sigma(I))$ as
\begin{equation}
  \left\{\renewcommand{\arraystretch}{3}
    \begin{array}{@{}l@{}}
      X_1^{(M)}(t)=X_1^{(M)}(0)+\dsum_{i=1}^{K^{12}(I-1)}(+1)
        +\dsum_{i=1}^{K^{31}(I-1)}(-1), \\
      X_2^{(M)}(t)=X_2^{(M)}(0)+\dsum_{i=1}^{K^{23}(I-1)}(+1)
        +\dsum_{i=1}^{K^{12}(I-1)}(-1), \\
      X_3^{(M)}(t)=X_3^{(M)}(0)+\dsum_{i=1}^{K^{31}(I-1)}(+1)
        +\dsum_{i=1}^{K^{23}(I-1)}(-1),
    \end{array}\right. \label{eq2.14}
\end{equation}
and at $t=\sigma(I)$ as
\begin{equation}
  \left\{\renewcommand{\arraystretch}{3}
    \begin{array}{@{}l@{}}
      X_1^{(M)}(\sigma(I))
        =X_1^{(M)}(0)+\dsum_{i=1}^{K^{12}(I)}(+1)+\dsum_{i=1}^{K^{31}(I)}(-1),\\
      X_2^{(M)}(\sigma(I))=X_2^{(M)}(0)+\dsum_{i=1}^{K^{23}(I)}(+1)
        +\dsum_{i=1}^{K^{12}(I)}(-1), \\
      X_3^{(M)}(\sigma(I))=X_3^{(M)}(0)+\dsum_{i=1}^{K^{31}(I)}(+1)
      +\dsum_{i=1}^{K^{23}(I)}(-1),
    \end{array}\right. \label{eq2.15}
\end{equation}
where $\sigma(I)$ and $K^{j j+1}(I)$ are as follows in [Case~A']$\sim$[Case~D'].

\noindent
[Case A'] We consider the case of $X_j^{(M)}(\sigma(l))>0$ for 
$0 \leq l \leq I-1$ and $1 \leq j \leq 3$. 
This is the case that all random variables have positive values until $I-1$-th step.

In the present system of (\ref{eq2.8}) there are no jumps as to $N_{12}$ after 
$I_0-1$-th step and we implicitly assume $K^{12}(I_0-1)=\cdots=K^{12}(I-1)
=K^{12}(I)$. 
We replace $\tau_{K^{12}(I-1)+1}^{12}=\tau_{K^{12}(I_0-1)+1}^{12}$ 
by infinity in the system. Then we have
\begin{align*}
  \sigma(I)=\min _{2 \leq j \leq 3}
    \left\{\sigma(I-1)
      +\frac{\tau_{K^{j j+1}(I-1)+1}^{j j+1}-T_{j j+1}^{(M)}(\sigma(I-1))
      }{\dfrac{\lambda}{M} X_j^{(M)}(\sigma(I-1)) X_{j+1}^{(M)}(\sigma(I-1))}, \infty\right\}.
\end{align*}
By taking the minimum of $2 \leq j \leq 3$, we count up one in $K^{j j+1}(I)$ 
for the selected number and we do not count up one for the not selected number. 
If by taking the minimum of $2 \leq j \leq 3$ the number $j=2$ is selected, 
then we have $K^{12}(I)=K^{12}(I-1)$, $K^{23}(I)=K^{23}(I-1)+1$ and 
$K^{31}(I)=K^{31}(I-1)$. 
If by taking the minimum of $2 \leq j \leq 3$ the numbers $j=2,3$ are selected, 
then we have $K^{12}(I-1)=K^{12}(I_0-1)$, $K^{23}(I)=K^{23}(I-1)+1$ and 
$K^{31}(I)=K^{31}(I-1)+1$. 
The implicit assumption is satisfied to $I_0$-th step.

If by taking the minimum of $2 \leq j \leq 3$ the number $j=2$ is 
selected, we have
$K^{12}(I)=K^{12}(I-1)$, $K^{23}(I)=K^{23}(I-1)+1$ and 
$K^{31}(I)=K^{31}(I-1)$. Then
\begin{align*}
&
  \sigma(I)=\sigma(I-1)
    +\frac{\tau_{K^{23}(I-1)+1}^{23}-T_{23}^{(M)}(\sigma(I-1))
    }{\dfrac{\lambda}{M} X_2^{(M)}(\sigma(I-1)) X_3^{(M)}
    (\sigma(I-1))}, \\
&  
  \sigma(I)<\sigma(I-1)
    +\frac{\tau_{K^{31}(I-1)+1}^{31}-T_{31}^{(M)}(\sigma(I-1))
    }{\dfrac{\lambda}{M} X_3^{(M)}(\sigma(I-1)) 
    X_1^{(M)}(\sigma(I-1))} .
\end{align*}

If by taking the minimum of $2 \leq j \leq 3$ the numbers $j=2,3$ 
are selected, we have $K^{12}(I)=K^{12}(I-1)$, 
$K^{23}(I)=K^{23}(I-1)+1$ and $K^{31}(I)=K^{31}(I-1)+1$. 

\noindent
And
\begin{align*}
& 
  \sigma(I)=\sigma(I-1)
    +\frac{\tau_{K^{23}(I-1)+1}^{23}-T_{23}^{(M)}(\sigma(I-1))
    }{\dfrac{\lambda}{M} X_2^{(M)}(\sigma(I-1)) 
    X_3^{(M)}(\sigma(I-1))}, \\
& 
  \sigma(I)=\sigma(I-1)
    +\frac{\tau_{K^{31}(I-1)+1}^{31}-T_{31}^{(M)}(\sigma(I-1))
    }{\dfrac{\lambda}{M} X_3^{(M)}(\sigma(I-1)) X_1^{(M)}(\sigma(I-1))}.
\end{align*}

Similarly as in [Case A'1], 
in these above two cases $P_j(I-1, I)$ and $P_j(I)$ hold for 
$j=1,2,3$.

Note that the proposition $P_1(I)$ leads
\begin{align*}
  \sum_{i=1}^{K^{12}(I)} 1 
    = K^{12}(I)=K^{12}(I-1)=\cdots
    =K^{12}(I_0-1)=N_{12}^v
    (T_{12}^{(M)}(\sigma(I))),
\end{align*}
and that for $\sigma(I-1)<t<\sigma(I)$ the proposition 
$P_1(I-1, I)$ leads
\begin{align*}
  \sum_{i=1}^{K^{12}(I-1)} 1 
    = K^{12}(I-1)=\cdots
    =K^{12}(I_0-1)=N_{12}^v(T_{12}^{(M)}(t)).
\end{align*}

For $\sigma(I-1)<t<\sigma(I)$, (\ref{eq2.14}) is replaced by
\begin{equation}
  \left\{\renewcommand{\arraystretch}{1.5}
    \begin{array}{@{}l@{}}
      X_1^{(M)}(t)
        =X_1^{(M)}(0)+N_{12}^v\left(T_{12}^{(M)}(t)\right)
        -N_{31}\left(T_{31}^{(M)}(t)\right), \\
      X_2^{(M)}(t)
        =X_2^{(M)}(0)+N_{23}\left(T_{23}^{(M)}(t)\right)
        -N_{12}^v\left(T_{12}^{(M)}(t)\right),\\
      X_3^{(M)}(t)
        =X_3^{(M)}(0)+N_{31}\left(T_{31}^{(M)}(t)\right)
        -N_{23}\left(T_{23}^{(M)}(t)\right),
    \end{array}\right.
  \label{eq2.16}
\end{equation}
and, at $\sigma(I)$, (\ref{eq2.15}) is replaced by
\begin{equation}
  \left\{\renewcommand{\arraystretch}{1.5}
  \begin{array}{@{}l@{}}
    X_1^{(M)}(\sigma(I))
      =X_1^{(M)}(0)+N_{12}^v\left(T_{12}^{(M)}(\sigma(I))\right)
      -N_{31}\left(T_{31}^{(M)}(\sigma(I))\right), \\
    X_2^{(M)}(\sigma(I))
      =X_2^{(M)}(0)+N_{23}\left(T_{23}^{(M)}(\sigma(I))\right)
      -N_{12}^v\left(T_{12}^{(M)}(\sigma(I))\right), \\
    X_3^{(M)}(\sigma(I))
      =X_3^{(M)}(0)+N_{31}\left(T_{31}^{(M)}(\sigma(I))\right)
      -N_{23}\left(T_{23}^{(M)}(\sigma(I))\right) .
  \end{array}\right.
  \label{eq2.17}
\end{equation}

Consequently it is seen that there exists a solution of the system of 
(\ref{eq2.8}) in [Case~A'] and $I-1$ in (\ref{eq2.13}) 
is replaced by $I$ in (\ref{eq2.15}).

\noindent
[Case B'] We consider the case of $X_1^{(M)}(\sigma(l))>0$, 
$X_2^{(M)}(\sigma(l))>0$, $X_3^{(M)}(\sigma(l'))>0$ 
and $X_3^{(M)}(\sigma(I-1))=0$ for 
$0 \leq l \leq I-1$ and $0 \leq l'<I-1$. 
The random variable of species 3 has come to the value zero at 
$\sigma(I-1)$, before the random variable of species 1 comes to the 
value zero.

In this case we have that 
$K^{12}(I_0-1)=\cdots=K^{12}(I-1)$ by several times of [Case~A'].

For $j=2,3$, as to $\sigma(I-1)+\frac{\tau_{K^{j j+1}(I-1)+1}^{j j+1}
-T_{j j+1}^{(M)}(\sigma(I-1))
}{\frac{\lambda}{M} X_j^{(M)}(\sigma(I-1)) X_{j+1}^{(M)}(\sigma(I-1))}$ 
the denominators are zero in [Case~B'] and the numerators are positive 
because of $P_j(I-1)$. Thus we replace these terms by infinity. 
We also replace $\tau_{K^{12}(I-1)+1}^{12}$ by infinity. 
We determine $\sigma(I)$ as
\begin{align*}
  \sigma(I) & =\min \{\infty, \infty, \infty\} \\
            & =\infty.
\end{align*}
Thus we do not need the solution of the system at $\sigma(I)=\infty$.

Similarly as in [Case~B'1] we prove $P_j(I_0-1, I_0)$ for $j=1,2,3$.

In $(\sigma(I-1), \infty)$, (\ref{eq2.14}) is replaced by (\ref{eq2.16}).

It follows that there exists a solution of the system of (\ref{eq2.8}) 
in [Case~B'].

\noindent
[Case C'] We consider the case of $X_1^{(M)}(\sigma(l'))>0$, 
$X_1^{(M)}(\sigma(l''))=0$, $X_2^{(M)}(\sigma(l))> 0$, 
and $X_3^{(M)}(\sigma(l))>0$ for $0 \leq l \leq I-1$, 
$0 \leq l'<k$ and $k \leq l'' \leq I-1$ ($I_0-1<k \leq I-1$). 
In this case the value of random variable of species 1 has come to zero 
at $\sigma(k)$ and kept it in $[\sigma(k), \sigma(I-1)]$.

In this case we implicitly assume $K^{12}(I_0-1)=\cdots=K^{12}(I-1)$ 
and $K^{31}(k)=\cdots=K^{31}(I-1)$. 
We have $X_1^{(M)}(t)=0$ for $t \in[\sigma(k), \sigma(I-1)]$.

We replace $\tau_{K^{12}(I-1)+1}^{12}$ by infinity. 
As to $\sigma(I-1)+\frac{\tau_{K^{jj+1}(I-1)+1}^{j j+1}
-T_{j j+1}^{(M)}(\sigma(I-1))
}{\frac{\lambda}{M} X_j^{(M)}(\sigma(I-1)) X_{j+1}^{(M)}(\sigma(I-1))}$ 
the denominators are zero. The numerator for $j=3$ is positive because of $P_3(I-1)$ and the numerator for $j=1$ is infinite. Then we replace these two terms by infinity.

We determine $\sigma(I)$ as
\begin{align*}
  \sigma(I-1) 
    & =\min \left\{\sigma(I-1)
      +\frac{\tau_{K^{23}(I-1)+1}^{23}-T_{23}^{(M)}(\sigma(I-1))
      }{\dfrac{\lambda}{M} X_2^{(M)}(\sigma(I-1)) X_3^{(M)}
      (\sigma(I-1))}, \infty, \infty\right\} \\
    & =\sigma(I-1)
      +\frac{\tau_{K^{23}(I-1)+1}^{23}-T_{23}^{(M)}(\sigma(I-1))
      }{\dfrac{\lambda}{M} X_2^{(M)}(\sigma(I-1)) X_3^{(M)}
      (\sigma(I-1))}.
\end{align*}
We count up one in $K^{j j+1}(I)$ for the selected number $j=2$ and 
we do not count up one for the not selected number $j=3$. 
We have $K^{12}(I)=K^{12}(I-1)=K^{12}(I_0-1)$, 
$K^{23}(I)=K^{23}(I-1)+1$ and $K^{31}(I)=K^{31}(I-1)$. 
Thus the implicit assumption hold to $I$-th step.

In a similarly way as [Step~5] $P_1(I-1, I)$ and $P_1(I)$ hold. 
For the selected number $j=2$, similarly as in [Step~1] of 
Theorem~\ref{theo2.1}, we have $P_2(I-1, I)$ and $P_2(I)$. Similarly as 
in [Step~3] of Theorem~\ref{theo2.1} 
$P_3(I-1, I)$ and $P_3(I)$ hold.

In $(\sigma(I-1), \sigma(I))$, (\ref{eq2.14}) is replaced by 
(\ref{eq2.16}) and, at $\sigma(I)$, (\ref{eq2.15}) is also replaced 
by (\ref{eq2.17}).

Consequently we have a solution of the system in [Case C'] 
and $I-1$ in (\ref{eq2.13}) is replaced by $I$ in (\ref{eq2.15}).

\noindent
[Case D'] We consider the case of $X_1^{(M)}(\sigma(l'))>0$, 
$X_1^{(M)}(\sigma(l''))=0$, $X_2^{(M)}(\sigma(l))> 0$, 
$X_3^{(M)}(\sigma(l'''))>0$ and $X_3^{(M)}(\sigma(I-1))=0$ for 
$0 \leq l \leq I-1$, $0 \leq l'<k$, $k \leq l'' \leq I-1$, 
$0 \leq l'''<I-1$ $(I_0-1 \leq k<I-1)$. 
This is the first case that the value of random variable of species 
3 reaches zero after several times of [Case~C'].

In the present case we implicitly have $K^{12}(I_0-1)=\cdots
=K^{12}(I-1)$ and $K^{23}(k)=\cdots=K^{23}(I-1)$ 
by several times of [Case~C'].

We replace $\tau_{K^{12}(I-1)+1}^{12}$ by infinity. 
As to $\sigma(I-1)+\frac{\tau_{K^{j j+1}(I)+1}^{j j+1}
-T_{j j+1}^{(M)}(\sigma(I-1))
}{\frac{\lambda}{M} X_j^{(M)}(\sigma(I-1)) X_{j+1}^{(M)}
(\sigma(I-1))}$ ($j=1,2,3$) the denominators are zero in the present 
case. Because of $P_j(I-1)$ the numerators for $j=2,3$ are positive 
and the numerator for $j=1$ is infinite. 
Thus we replace all terms by infinity.

We determine $\sigma(I)$ as
\begin{align*}
  \sigma(I) & =\min \{\infty, \infty, \infty\} \\
    &=\infty.
\end{align*}
Thus we do not need the solution of the system at $\sigma(I)=\infty$.

Similarly as in [Step~4] in Theorem~\ref{theo2.1}, 
$P_j(I-1, I)$ hold for $j=2,3$. 
The proposition $P_1(I-1, I)$ holds similarly as in [Step~6].

In $(\sigma(I-1), \infty)$ (\ref{eq2.14}) is replaced 
by (\ref{eq2.16}).

Therefore there exists a solution in [Case~D'].

By mathematical induction there exists a solution of the system of 
(\ref{eq2.8}) in $[0, \infty)$.

Now we shall prove that the solution constructed above is unique.

We see that the random variables $X_j^{(M)}(t)$ $(1 \leq j \leq 3)$ are 
non-negative in a similar way as in the previous theorem. 
It follows that $\frac{\lambda}{M} \int_0^t X_k^{(M)}(s) 
X_{k+1}^{(M)}(s) ds$ are monotonously non-decreasing 
($1 \leq k \leq 3$).

We prove uniqueness of the solution by mathematical induction after 
$I_0$-th step.

In $[0, \sigma(I_0-1)]$ there exists a unique solution and it coincides 
with the solution constructed actually in the proof of existence of 
a solution by Theorem~\ref{theo2.1}. 
Note that the propositions hold in $[0, \sigma(I_0-1)]$.

The change of the system occurs at $s(I_0)$ such that
\begin{align*}
  s(I_0)
    =\min _{1 \leq j \leq 3}
      \left\{\inf \left\{t>\sigma(I_0-1): 
      T_{j j+1}^{(M)}(t)=\tau_{K^{j j+1}(I_0-1)+1}^{j j+1}
      \right\}\right\},
\end{align*}
where we set
\begin{align*}
  T_{j j+1}^{(M)}(t)
    =\frac{\lambda}{M} \int_0^t X_j^{(M)}(s) X_{j+1}^{(M)}(s) d s.
\end{align*}
[Case a'1] We consider the case of $X_j^{(M)}(\sigma(l))>0$ 
for $0 \leq l \leq I_0-1$ and $1 \leq j \leq 3$. 

Note that $\tau_{K^{12}(I_0-1)+1}^{12}=\infty$. 
For $t>\sigma(I_0-1)$ we have
\begin{align*}
  T_{12}^{(M)}(t) 
    & = T_{12}^{(M)}(\sigma(I_0-1))
      +\frac{\lambda}{M} X_1^{(M)}(\sigma(I_0-1)) 
      X_2^{(M)}(\sigma(I_0-1))(t-\sigma(I_0-1)) \\
    & <\infty.
\end{align*}
Thus
\begin{align*}
  \left\{
    t>\sigma(I_0-1): 
      T_{12}^{(M)}(t)=\tau_{K^{12}(I_0-1)+1}^{12}=\infty\right\}=\emptyset.
\end{align*}
and
\begin{align*}
  \inf \left\{
    t>\sigma(I_0-1): 
      T_{12}^{(M)}(t)=\tau_{K^{12}(I_0-1)+1}^{12}=\infty\right\}
    =\infty.
\end{align*}

It follows that
\begin{align*}
  s(I)=\min _{2 \leq j \leq 3}
  \left\{
    \sigma(I-1)+\frac{\tau_{K^{j j+1}(I_0-1)+1}^{j j+1}
      -T_{j j+1}^{(M)}(\sigma(I_0-1))
      }{\dfrac{\lambda}{M} X_j^{(M)}(\sigma(I_0-1)) 
      X_{j+1}^{(M)}(\sigma(I_0-1))}, \infty\right\}.
\end{align*}
[Case b'1] We consider the case of $X_1^{(M)}(\sigma(l))>0$, 
$X_2^{(M)}(\sigma(l))>0$, $X_3^{(M)}(\sigma(l'))>0$ 
and $X_3^{(M)}(\sigma(l''))=0$ for 
$0 \leq l \leq I_0-1$, $0 \leq l'<k$, 
and $k \leq l'' \leq I_0-1$ $(0 \leq k \leq I_0-1)$.

As $T_{j j+1}^{(M)}(t)=T_{j j+1}^{(M)}(\sigma(I_0-1))$ 
for $t>\sigma(I_0-1)$ $(j=2,3)$ and $P_j(I_0-1)$ hold, we have
\begin{align*}
  \left\{
    t>\sigma(I_0-1): 
      T_{j j+1}^{(M)}(t)=\tau_{K^{j j+1}(I_0-1)+1}^{jj+1}
  \right\}=\emptyset .
\end{align*}

It follows that
\begin{align*}
  s(I_0)=\min \{\infty, \infty, \infty\}.
\end{align*}

In $[0, \sigma(I-1)]$ we assume that there exists a unique solution of 
the system and that it coincides with the solution constructed actually 
in the proof of existence the system $(I \geq I_0)$. 
Note that the propositions hold in $[0, \sigma(I-1)]$.

The change of the system occurs at the time $s(I)$ such that
\begin{align*}
  s(I)=\min _{1 \leq j \leq 3}
    \left\{
      \inf \left\{t>\sigma(I-1): 
      T_{j j+1}^{(M)}(t)=\tau_{K^{j j+1}(I-1)+1}^{j j+1}
      \right\}\right\}.
\end{align*}
[Case a'] We consider the case of $X_j^{(M)}(\sigma(l))>0$ 
for $0 \leq l \leq I-1$ and $1 \leq j \leq 3$.

We have $K^{12}(I_0-1)=\cdots=K^{12}(I-1)$ and 
$\tau_{K^{12}(I-1)+1}^{12}=\cdots=\tau_{K^{12}(I_0-1)+1}^{12}=\infty$ 
in [Case~A'].

Similarly as in [Case~a'1] we have
\begin{align*}
  s(I)=\min _{2 \leq j \leq 3}
    \left\{
      \sigma(I-1)+\frac{\tau_{K^{j j+1}(I-1)+1}^{j j+1}
        -T_{j j+1}^{(M)}(\sigma(I-1))
      }{\dfrac{\lambda}{M} X_j^{(M)}(\sigma(I-1)) 
      X_{j+1}^{(M)}(\sigma(I-1))}, \infty\right\}.
\end{align*}
[Case b'] We consider the case of $X_1^{(M)}(\sigma(l))>0$, 
$X_2^{(M)}(\sigma(l))>0$, $X_3^{(M)}(\sigma(l'))>0$ 
and $X_3^{(M)}(\sigma(I-1))=0$ for $0 \leq l \leq I-1$ 
and $0 \leq l'<I-1$.

Similarly as in [Case b'1], we have
\begin{align*}
  s(I)=\min \{\infty, \infty, \infty\}.
\end{align*}
[Case c'] We consider the case of $X_1^{(M)}(\sigma(l'))>0$, 
$X_1^{(M)}(\sigma(l''))=0$, $X_2^{(M)}(\sigma(l))> 0$, and 
$X_3^{(M)}(\sigma(l))>0$ for $0 \leq l \leq I-1$, $0 \leq l'<k$ and 
$k \leq l'' \leq I-1$ $(I_0-1<k \leq I-1)$.

We have $T_{31}^{(M)}(t)=T_{31}^{(M)}(\sigma(I-1))<\tau_{K^{31}(I-1)+1}^{31}$ 
for $t>\sigma(I-1)$. It follows from $P_3(I-1)$ that
\begin{align*}
  \left\{t>\sigma(I-1): T_{31}^{(M)}(t)=\tau_{K^{31}(I-1)+1}^{31}\right\}
    =\emptyset.
\end{align*}

We have
\begin{align*}
  s(I)=\min \left\{\sigma(I-1)
    +\frac{\tau_{K^{23}(I-1)+1}^{23}-T_{23}^{(M)}(\sigma(I-1))
    }{\dfrac{\lambda}{M} X_2^{(M)}(\sigma(I-1)) X_3^{(M)}(\sigma(I-1))}, 
    \infty, \infty\right\}.
\end{align*}
[Case d'] We consider the case of $X_1^{(M)}(\sigma(l'))>0$, 
$X_1^{(M)}(\sigma(l''))>0$, $X_2^{(M)}(\sigma(l))> 0$, 
$X_3^{(M)}(\sigma(l'''))>0$ and $X_3^{(M)}(\sigma(I-1))=0$ for 
$0 \leq l \leq I-1,0 \leq l'<k$, $k \leq l'' \leq I-1$, 
$0 \leq l'''<I-1$ $(I_0-1 \leq k \leq I-1)$.

In this case we have that $K^{12}(I_0-1)=\cdots=K^{12}(I-1)$ 
and $K^{23}(k)=\cdots=$ $K^{23}(I-1)$.

As $T_{j j+1}^{(M)}(t)=T_{j j+1}^{(M)}(\sigma(I-1))
<\tau_{K^{j j+1}(I-1)+1}^{j j+1}$ for 
$t>\sigma(I-1)$ $(j=2,3)$ and $P_j(I-1)$ hold, we have
\begin{align*}
  \left\{t>\sigma(I-1): T_{j j+1}^{(M)}(t)
    =\tau_{K^{j j+1}(I-1)+1}^{j j+1}\right\}=\emptyset.
\end{align*}

We have
\begin{align*}
  s(I)=\min \{\infty, \infty, \infty\}.
\end{align*}

The jump time $\sigma(I_0)$ in [Case~A'1] and [Case~B'1] of the proof of existence of 
a solution coincides with $s(I_0)$ of [Case~a'1] and [Case~b'1]. 
The jump time $\sigma(I)$ in [Case~A']$\sim$[Case~D'] also coincides with $s(I)$ 
of [Case~a']$\sim$[Case~d']. There are no methods to construct the solution of 
the system of (\ref{eq2.2}) except the construction, since $\frac{\lambda}{M} 
\int_0^t X_k^{(M)}(s) X_{k+1}^{(M)}(s) d s$ are monotonously non-decreasing 
$(1 \leq k \leq 3)$. 
The constructed solution is unique, because $\sigma(I_0)$ and $\sigma(I)$ is 
determined by the factors to $I_0-1$-th and $I-1$-th step 
and jump times of standard Poisson processes.

By mathematical induction we prove that there exists a unique solution in 
$[0, \infty)$. \hfill $\square$

\begin{coro}\label{coro2.2}
  There exists a unique solution of equation \normalfont{(\ref{eq2.2})}, 
  when $t \in[0, t_0)$ for $t_0 \in[0, \infty)$.
\end{coro}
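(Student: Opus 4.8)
The plan is to deduce this from Theorem~\ref{theo2.2} exactly as Corollary~\ref{coro2.1} was deduced from Theorem~\ref{theo2.1}, namely by truncating the step-by-step construction at time $t_0$. Recall that the proof of Theorem~\ref{theo2.2} builds the solution on $[0,\infty)$ inductively: it produces an increasing sequence of jump times $0=\sigma(0)<\sigma(1)<\sigma(2)<\cdots$ together with the counters $K^{jj+1}(I)$, the solution being constant on each open interval $(\sigma(I-1),\sigma(I))$ and changing only at the points $\sigma(I)$; moreover, at each step both $\sigma(I)$ and the corresponding increment are uniquely forced by the data up to step $I-1$ and by the jump times of the (truncated) standard Poisson processes.

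First I would observe that only finitely many steps can occur before $t_0$. Since $0\le X_j^{(M)}(s)\le M$ for all $s$ and all $j$, each time-change satisfies $T_{jj+1}^{(M)}(t)=\frac{\lambda}{M}\int_0^t X_j^{(M)}(s)X_{j+1}^{(M)}(s)\,ds\le \lambda M t$, so on $[0,t_0)$ the monotone non-decreasing functions $T_{jj+1}^{(M)}$ take values in the bounded set $[0,\lambda M t_0)$, which contains only finitely many jump times of the corresponding (truncated) standard Poisson process. Hence there is a largest step index $I^{\ast}$ with $\sigma(I^{\ast})<t_0$, and the construction of Theorem~\ref{theo2.2} restricted to the steps $0,1,\dots,I^{\ast}$ already determines a solution of the system~(\ref{eq2.8}) on $[0,t_0)$; this establishes existence.

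Next I would address uniqueness on $[0,t_0)$. Any solution of~(\ref{eq2.8}) on $[0,t_0)$ has non-negative, bounded, integer-valued components — the non-negativity claim used in Theorem~\ref{theo2.1} has a purely local proof (it derives a contradiction at a first jump time after a zero) and hence applies on $[0,t_0)$ — so its time-changes $T_{jj+1}^{(M)}$ are monotone non-decreasing. Therefore the argument in Theorem~\ref{theo2.2} that identifies $s(I)$ with $\sigma(I)$ and forces the increment at each step applies verbatim, one step at a time, for every step lying in $[0,t_0)$. Consequently any such solution coincides on $[0,t_0)$ with the restriction of the global solution of Theorem~\ref{theo2.2}, which yields uniqueness.

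I do not expect a genuine obstacle here: the statement is a routine truncation of Theorem~\ref{theo2.2}, parallel to Corollary~\ref{coro2.1}, and the proof amounts to stopping the inductive construction once the step exceeds $t_0$. The only point deserving a word of care is ruling out an accumulation of step times inside $[0,t_0)$, and this is immediate from the uniform bound $X_j^{(M)}\le M$, which confines each time-change to a bounded interval on $[0,t_0)$ and hence to finitely many Poisson jumps.
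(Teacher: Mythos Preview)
Your proposal is correct and follows the same approach the paper uses (implicitly): the paper gives no separate proof for Corollary~\ref{coro2.2}, but the intended argument is exactly the one-line proof of Corollary~\ref{coro2.1} --- stop the step-by-step construction of Theorem~\ref{theo2.2} once the step passes $t_0$. Your write-up is in fact more careful than the paper's, since you make explicit why only finitely many steps occur before $t_0$; note also that you correctly read the reference to ``equation~(\ref{eq2.2})'' as a typo for (\ref{eq2.8}), which is confirmed by how Corollary~\ref{coro3.2} uses this result.
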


\section{A stochastic structure of the model}
\label{sec3}

From now on, we assume that $X_i^{(M)}(0)$ $(i=1,2,3)$ is independent of 
$N_{j j+1}(*)$ $(j=1,2,3)$. 
We define the reference family $(\mathcal{F}_t^{j j+1})_{t \geq 0}$
$(j=1,2,3)$ by
\begin{align*}
  \mathcal{F}_t^{j j+1} 
    =& \  \sigma\left(X_i^{(M)}(0): 1 \leq i \leq 3\right) \\
     & \vee \sigma\left(N_{j j+1}(s): 0 \leq s \leq t\right) \\
     & \vee \sigma\left(N_{i i+1}(u): u \geq 0,\  1 \leq i \leq 3, \  
     i \neq j\right), 
\end{align*}
where we put for each $t \in[0, \infty)$ the random time 
$T_{j j+1}^{(M)}(t)$ $(j=1,2,3)$ 
just similarly as in the previous section by
\[
T_{j j+1}^{(M)}(t)
  =\frac{\lambda}{M} \int_0^t X_j^{(M)}(s) X_{j+1}^{(M)}(s) d s.
\]

From equation (\ref{eq2.2}), for $t \in[0, \infty)$, 
we have the relation of $T^{(M)}(t)=(T_{12}^{(M)}(t), \allowbreak
T_{23}^{(M)}(t), T_{31}^{(M)}(t))$ as follows:
\begin{equation}
  \left\{
    \begin{aligned}
      & T_{12}^{(M)}(t)
        =\frac{\lambda}{M} \int_0^t\left(X_1^{(M)}(0)+N_{12}\left(T_{12}^{(M)}(s)\right)-N_{31}\left(T_{31}^{(M)}(s)\right)\right) \\
      & \hskip20mm~~~~ \left(X_2^{(M)}(0)+N_{23}\left(T_{23}^{(M)}(s)\right)
        -N_{12}\left(T_{12}^{(M)}(s)\right)\right) d s, \\
      & T_{23}^{(M)}(t)=\frac{\lambda}{M} \int_0^t\left(X_2^{(M)}(0)
        +N_{23}\left(T_{23}^{(M)}(s)\right)-N_{12}
        \left(T_{12}^{(M)}(s)\right)\right) \\
      & \hskip20mm~~~~ \left(X_3^{(M)}(0)+N_{31}\left(T_{31}^{(M)}(s)\right)
        -N_{23}\left(T_{23}^{(M)}(s)\right)\right) d s, \\
      & T_{31}^{(M)}(t)=\frac{\lambda}{M} \int_0^t\left(X_3^{(M)}(0)
        +N_{31}\left(T_{31}^{(M)}(s)\right)-N_{23}\left(T_{23}^{(M)}(s)
        \right)\right) \\
      &\hskip20mm~~~~ \left(X_1^{(M)}(0)+N_{12}\left(T_{12}^{(M)}(s)\right)
        -N_{31}\left(T_{31}^{(M)}(s)\right)\right) d s, \\
      & T^{(M)}(0)=0 .
  \end{aligned}\right.
  \label{eq3.1}
\end{equation}

\begin{theo}
  \label{theo3.1}
  When we fix the sample path $\omega \in \Omega, T^{(M)}(t)(\omega)$ is uniquely determined.
\end{theo}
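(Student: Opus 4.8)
The plan is to reduce the assertion to the uniqueness already proved in Theorem~\ref{theo2.1}. Fix $\omega\in\Omega$; then the jump times $\{\tau_i^{jj+1}\}$ of the three standard Poisson paths are fixed and locally finite, so every composition $N_{jj+1}(\varphi(\cdot))$ with $\varphi$ a nonnegative nondecreasing function is unambiguously defined and all measurability issues disappear. First I would set up a one-to-one correspondence between solutions $T^{(M)}=(T_{12}^{(M)},T_{23}^{(M)},T_{31}^{(M)})$ of the integral system~(\ref{eq3.1}) and solutions $X^{(M)}=(X_1^{(M)},X_2^{(M)},X_3^{(M)})$ of~(\ref{eq2.2}). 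In one direction, a solution $X^{(M)}$ of~(\ref{eq2.2}) produces $T_{jj+1}^{(M)}(t)=\frac{\lambda}{M}\int_0^t X_j^{(M)}(s)X_{j+1}^{(M)}(s)\,ds$, and substituting~(\ref{eq2.2}) into this integral recovers exactly~(\ref{eq3.1}). In the other direction, from a solution $T^{(M)}$ of~(\ref{eq3.1}) I would define
\[
  X_j^{(M)}(t)=X_j^{(M)}(0)+N_{jj+1}\!\bigl(T_{jj+1}^{(M)}(t)\bigr)-N_{j-1\,j}\!\bigl(T_{j-1\,j}^{(M)}(t)\bigr),\qquad j=1,2,3 .
\]

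The substance of the argument is the verification that this $X^{(M)}$ solves~(\ref{eq2.2}). The key observation is that, with $X_j^{(M)}$ defined as above, the integrand appearing in the $j$-th line of~(\ref{eq3.1}) is precisely the product $X_j^{(M)}(s)X_{j+1}^{(M)}(s)$, so~(\ref{eq3.1}) says nothing other than $T_{jj+1}^{(M)}(t)=\frac{\lambda}{M}\int_0^t X_j^{(M)}(s)X_{j+1}^{(M)}(s)\,ds$ for every $t$ --- an exact equality, not merely one valid up to an inter-jump interval. Feeding this identity back into the defining formulas for $X_j^{(M)}$ turns them into the system~(\ref{eq2.2}). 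Along the way one checks the routine points: $X_j^{(M)}$ is integer valued, the relation $X_1^{(M)}+X_2^{(M)}+X_3^{(M)}=M$ is preserved so that $0\le X_j^{(M)}\le M$, and hence all the integrals in~(\ref{eq3.1}) are finite and the manipulations are legitimate.

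With the correspondence in hand the conclusion is immediate. If $T^{(M)}$ and $\widetilde T^{(M)}$ both solve~(\ref{eq3.1}), the associated $X^{(M)}$ and $\widetilde X^{(M)}$ both solve~(\ref{eq2.2}), so by Theorem~\ref{theo2.1} they coincide; then $T_{jj+1}^{(M)}(t)=\frac{\lambda}{M}\int_0^t X_j^{(M)}(s)X_{j+1}^{(M)}(s)\,ds=\frac{\lambda}{M}\int_0^t\widetilde X_j^{(M)}(s)\widetilde X_{j+1}^{(M)}(s)\,ds=\widetilde T_{jj+1}^{(M)}(t)$ for all $t$ and $j$, i.e.\ $T^{(M)}=\widetilde T^{(M)}$. (Existence of at least one $T^{(M)}(\cdot)(\omega)$ follows in the same way from the existence half of Theorem~\ref{theo2.1}, or one could alternatively reproduce the step-by-step construction of that theorem directly for the piecewise-linear functions $T_{jj+1}^{(M)}$.) I do not anticipate a real obstacle here: the only point needing genuine care is the bookkeeping in the second paragraph showing that the passage from~(\ref{eq3.1}) to~(\ref{eq2.2}) is exact, after which everything rides on Theorem~\ref{theo2.1}.
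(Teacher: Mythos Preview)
Your proposal is correct and follows essentially the same route as the paper: define $X_j^{(M)}$ from a putative solution $T^{(M)}$ of~(\ref{eq3.1}) exactly as in your displayed formula (the paper's~(\ref{eq3.2})), observe that~(\ref{eq3.1}) then reads $T_{jj+1}^{(M)}(t)=\frac{\lambda}{M}\int_0^t X_j^{(M)}(s)X_{j+1}^{(M)}(s)\,ds$, conclude that $X^{(M)}$ solves~(\ref{eq2.2}), and invoke the uniqueness in Theorem~\ref{theo2.1} to force any two $T^{(M)}$'s to coincide. Your write-up is in fact slightly more careful than the paper's in spelling out the two directions of the correspondence and the routine boundedness checks.
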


\begin{proof}\normalfont 
  For each $t \in[0, \infty)$, we define
\end{proof}
\begin{equation}
  \left\{\renewcommand{\arraystretch}{1.5}
    \begin{array}{@{}l@{}}
      X_1^{(M)}(t)=X_1^{(M)}(0)+N_{12}\left(T_{12}^{(M)}(t)\right)
        -N_{31}\left(T_{31}^{(M)}(t)\right), \\
      X_2^{(M)}(t)=X_2^{(M)}(0)+N_{23}\left(T_{12}^{(M)}(t)\right)
        -N_{12}\left(T_{12}^{(M)}(t)\right), \\
      X_3^{(M)}(t)=X_3^{(M)}(0)+N_{12}\left(T_{31}^{(M)}(t)\right)
      -N_{23}\left(T_{31}^{(M)}(t)\right) .
  \end{array}\right.
  \label{eq3.2}
\end{equation}

From (\ref{eq3.1}) and (\ref{eq3.2}), we have
\begin{equation}
  \left\{\renewcommand{\arraystretch}{2}
    \begin{array}{@{}l@{}}
      T_{12}^{(M)}(t)=\dfrac{\lambda}{M} \dint_0^t X_1^{(M)}(s) X_2^{(M)}(s) ds, \\
      T_{23}^{(M)}(t)=\dfrac{\lambda}{M} \dint_0^t X_2^{(M)}(s) X_3^{(M)}(s) ds, \\
      T_{31}^{(M)}(t)=\dfrac{\lambda}{M} \dint_0^t X_3^{(M)}(s) X_1^{(M)}(s) ds.
  \end{array}\right.
  \label{eq3.3}
\end{equation}
It follows that
\begin{align*}
  \left\{\begin{aligned}
  X_1^{(M)}(t)=X_1^{(M)}(0)+ & N_{12}\left(\frac{\lambda}{M} 
  \int_0^t X_1^{(M)}(s) X_2^{(M)}(s) d s\right) \\
  & -N_{31}\left(\frac{\lambda}{M} \int_0^t X_3^{(M)}(s) X_1^{(M)}(s) 
  d s\right), \\
  X_2^{(M)}(t)=X_2^{(M)}(0) & +N_{23}\left(\frac{\lambda}{M} 
  \int_0^t X_2^{(M)}(s) X_3^{(M)}(s) d s\right) \\
  & -N_{12}\left(\frac{\lambda}{M} \int_0^t X_1^{(M)}(s) X_2^{(M)}(s) 
  d s\right), \\
  X_3^{(M)}(t)=X_3^{(M)}(0) & +N_{31}\left(\frac{\lambda}{M} 
  \int_0^t X_3^{(M)}(s) X_1^{(M)}(s) d s\right) \\
  & -N_{23}\left(\frac{\lambda}{M} \int_0^t X_2^{(M)}(s) X_3^{(M)}(s)
   d s\right).
  \end{aligned}\right.
\end{align*}

Therefore there exists a solution of the above equation 
and the solution is represented by (\ref{eq3.2}).

By the way there exists a unique solution of the above equation 
by Theorem~\ref{theo2.1}. 
If there exist two solutions $T^{(M)}(t)=(T_{12}^{(M)}(t),
T_{23}^{(M)}(t), T_{31}^{(M)}(t))$ and $T^{(M)*}(t)$ = 
$(T_{12}^{(M)*}(t)$, $T_{23}^{(M)*}(t), T_{31}^{(M)*}(t))$ of 
equation~(\ref{eq3.1}), then by (\ref{eq3.3})
\begin{align*}
& 
  T_{12}^{(M)}(t)
    =T_{12}^{(M) *}(t)
    =\frac{\lambda}{M} \int_0^t X_1^{(M)}(s) X_2^{(M)}(s) ds, \\
& 
  T_{23}^{(M)}(t)
    =T_{23}^{(M) *}(t)
    =\frac{\lambda}{M} \int_0^t X_2^{(M)}(s) X_3^{(M)}(s) d s, \\
& 
  T_{31}^{(M)}(t)
    =T_{31}^{(M) *}(t)
    =\frac{\lambda}{M} \int_0^t X_3^{(M)}(s) X_1^{(M)}(s) d s.
\end{align*}
Therefore $T^{(M)}(t)=T^{(M) *}(t)$.

Theorem is proved.\hfill $\square$

\begin{coro}\label{coro3.1}
  When we fix the sample path $\omega \in \Omega$, 
  $T^{(M)}(t)$ is uniquely determined for 
  $t \in[0, t_0]$ $(t_0 \in[0, \infty))$.
\end{coro}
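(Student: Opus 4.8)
The plan is to obtain Corollary~\ref{coro3.1} by localising the proof of Theorem~\ref{theo3.1} to the interval $[0,t_0]$, using Corollary~\ref{coro2.1} in place of Theorem~\ref{theo2.1} wherever uniqueness of $X^{(M)}$ on the whole half-line was invoked.

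First I would repeat the construction in the proof of Theorem~\ref{theo3.1} verbatim, but with every assertion restricted to $t \in [0, t_0]$: given a solution $T^{(M)}$ of (\ref{eq3.1}) on $[0,t_0]$, define $X^{(M)}(t)$ for $t\in[0,t_0]$ by (\ref{eq3.2}); then combining (\ref{eq3.1}) and (\ref{eq3.2}) yields (\ref{eq3.3}) on $[0,t_0]$, so that $X^{(M)}$ solves the system (\ref{eq2.2}) on $[0,t_0]$. For this to be a genuinely finite, step-by-step construction one uses the elementary bound
\[
  0 \leq T_{j j+1}^{(M)}(t) = \frac{\lambda}{M}\int_0^t X_j^{(M)}(s) X_{j+1}^{(M)}(s)\, ds \leq \lambda M t_0, \qquad t\in[0,t_0],\ j=1,2,3,
\]
valid because each $X_i^{(M)}$ is non-negative and bounded by $M$; hence only finitely many jump times of the three standard Poisson processes lie below the level $\lambda M t_0$, so the jump times $\sigma(0)<\sigma(1)<\cdots$ of the system pass $t_0$ after finitely many steps (or stabilise at $\infty$). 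This is the same truncation mechanism already used in the proofs of Corollary~\ref{coro2.1} and Corollary~\ref{coro2.2}.

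Next, by Corollary~\ref{coro2.1} applied with any $t_1>t_0$, the system (\ref{eq2.2}) has a unique solution on $[0,t_1)\supset[0,t_0]$; in particular $X^{(M)}$ is uniquely determined on $[0,t_0]$. Therefore, if $T^{(M)}$ and $T^{(M)*}$ were two solutions of (\ref{eq3.1}) on $[0,t_0]$, each would give rise via (\ref{eq3.2}) to the same triple $X^{(M)}$, and then (\ref{eq3.3}) forces
\[
  T_{j j+1}^{(M)}(t) = \frac{\lambda}{M}\int_0^t X_j^{(M)}(s) X_{j+1}^{(M)}(s)\, ds = T_{j j+1}^{(M)*}(t), \qquad t\in[0,t_0],\ j=1,2,3,
\]
so $T^{(M)}(t)$ is uniquely determined on $[0,t_0]$.

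I do not expect a genuine obstacle: the corollary is just the restriction of Theorem~\ref{theo3.1} to a bounded interval, and since $[0,t_0]\subset[0,\infty)$ the statement even follows directly from that theorem. The only point worth a line of justification is the truncation at $t_0$ in the inductive construction, and that is harmless precisely because of the boundedness of the Poisson levels $T^{(M)}_{j j+1}$ on $[0,t_0]$ displayed above, which is exactly the observation underlying the two earlier corollaries.
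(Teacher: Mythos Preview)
Your proposal is correct and follows the same approach as the paper: the paper's proof is the single sentence ``Applying Corollary~\ref{coro2.1} to Theorem~\ref{theo3.1} we have the present corollary,'' and you have simply spelled out what that means. Your added remark that the result already follows by restricting Theorem~\ref{theo3.1} to $[0,t_0]\subset[0,\infty)$ is also valid and, if anything, makes the corollary even more immediate than the paper suggests.
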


\begin{proof}\normalfont
  Applying Corollarly~\ref{coro2.1} to Theorem~\ref{theo3.1} 
  we have the present corollarly. \hfill $\square$
\end{proof}

For any $v$, $v \geq 0$, we define a random field 
$\Phi_\omega^v$: $\mathbb{R}_{+}^3 \rightarrow \mathbb{R}_{+}^3$ 
as
\begin{align*}
& \Phi_\omega^v\left(\left(x_1, x_2, x_3\right)\right) \\
&\quad =\left(\renewcommand{\arraystretch}{2}
  \begin{array}{@{}l@{}}
\dfrac{\lambda}{M}\left(X_1^{(M)}(0)+N_{12}^v\left(x_1\right)
  -N_{31}\left(x_3\right)\right)
  \left(X_2^{(M)}(0)+N_{23}\left(x_2\right)
  -N_{12}^v\left(x_1\right)\right) \\
\dfrac{\lambda}{M}\left(X_2^{(M)}(0)+N_{23}\left(x_2\right)
  -N_{12}^v\left(x_1\right)\right)
  \left(X_3^{(M)}(0)+N_{31}\left(x_3\right)
  -N_{23}\left(x_2\right)\right) \\
\dfrac{\lambda}{M}\left(X_3^{(M)}(0)+N_{31}\left(x_3\right)
  -N_{23}\left(x_2\right)\right)
  \left(X_1^{(M)}(0)+N_{12}^v\left(x_1\right)
  -N_{31}\left(x_3\right)\right)
\end{array}\right)'.
\end{align*}
Put $S(t)=(S_1(t), S_2(t), S_3(t))$ to be the solution of
\begin{equation}
\label{eq3.4}  
  \left\{
    \begin{aligned}
      & S(t)(\omega) =\int_0^t \Phi_\omega^v(S(s)(\omega)) d s, \\
      & S(0) =0.
  \end{aligned}\right.
\end{equation}

\begin{theo}\label{theo3.2}
  When we fix the sample path $\omega \in \Omega$, 
  $S(t)(\omega)$ is uniquely determined.
\end{theo}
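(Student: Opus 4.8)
The plan is to run the argument of Theorem~\ref{theo3.1} again, this time with the system~(\ref{eq2.2}) replaced by the truncated system~(\ref{eq2.8}) and the standard Poisson process $N_{12}$ replaced everywhere by $N_{12}^v$. The first observation is that the three coordinates of $\Phi_\omega^v((x_1,x_2,x_3))$ are exactly $\frac{\lambda}{M}$ times $x_1'x_2'$, $x_2'x_3'$ and $x_3'x_1'$, where
\[
\left\{
\begin{array}{@{}l@{}}
x_1' = X_1^{(M)}(0) + N_{12}^v(x_1) - N_{31}(x_3), \\
x_2' = X_2^{(M)}(0) + N_{23}(x_2) - N_{12}^v(x_1), \\
x_3' = X_3^{(M)}(0) + N_{31}(x_3) - N_{23}(x_2).
\end{array}\right.
\]
This is precisely the right-hand side of~(\ref{eq2.8}) expressed through the random times, so~(\ref{eq3.4}) is the analogue for the truncated system of the relation~(\ref{eq3.1}) used in the proof of Theorem~\ref{theo3.1}.

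Fixing $\omega$, I would start from an arbitrary solution $S(t)=(S_1(t),S_2(t),S_3(t))$ of~(\ref{eq3.4}) and define $X_j^{(M)}(t)$ by the above formulas with $x_i$ replaced by $S_i(t)$. Then $\Phi_\omega^v(S(t)) = \frac{\lambda}{M}\bigl(X_1^{(M)}(t)X_2^{(M)}(t),\, X_2^{(M)}(t)X_3^{(M)}(t),\, X_3^{(M)}(t)X_1^{(M)}(t)\bigr)$, so~(\ref{eq3.4}) gives $S_j(t)=\frac{\lambda}{M}\int_0^t X_j^{(M)}(s)X_{j+1}^{(M)}(s)\,ds$; inserting this into the definition of $X_j^{(M)}$ shows that $X^{(M)}(\cdot)$ solves the system~(\ref{eq2.8}). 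By Theorem~\ref{theo2.2} that solution is unique, hence $X^{(M)}(\cdot)$ is determined by $\omega$, and therefore so is $S_j(t)=\frac{\lambda}{M}\int_0^t X_j^{(M)}(s)X_{j+1}^{(M)}(s)\,ds$. Conversely, taking the unique solution of~(\ref{eq2.8}) furnished by Theorem~\ref{theo2.2} and defining $S_j$ by these integrals yields a genuine solution of~(\ref{eq3.4}), so existence holds as well. A local version for $t\in[0,t_0]$ follows in the same manner from Corollary~\ref{coro2.2}, exactly as Corollary~\ref{coro3.1} follows from Theorem~\ref{theo3.1}.

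The only delicate point, and the one I expect to require the most care, is the bookkeeping of the truncation: $N_{12}$ must be replaced by $N_{12}^v$ in every place it occurs — both in the expression for $X_1^{(M)}$ and in the one for $X_2^{(M)}$ — while $N_{23}$ and $N_{31}$ stay as they are, so that the constructed $X^{(M)}(\cdot)$ is genuinely a solution of~(\ref{eq2.8}) rather than of~(\ref{eq2.2}) or of some mixture of the two. Once this correspondence is pinned down, all the substantive work is already done by the uniqueness part of Theorem~\ref{theo2.2}.
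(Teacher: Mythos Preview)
Your proposal is correct and is exactly the approach the paper takes: the paper's proof is a one-line remark that Theorem~\ref{theo3.2} follows from Theorem~\ref{theo2.2} in the same way Theorem~\ref{theo3.1} follows from Theorem~\ref{theo2.1}, and you have simply written out that parallel in detail. The bookkeeping point you flag about replacing $N_{12}$ by $N_{12}^v$ consistently is indeed the only thing to watch, and you have handled it correctly.
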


\begin{proof}\normalfont
  Applying Theorem~\ref{theo2.2} to $S(t)$ the present theorem is 
  concluded, similarly as in Theorem~\ref{theo3.1}.
  \hfill $\square$
\end{proof}

\begin{coro}\label{coro3.2}
  When we fix the sample path $\omega \in \Omega$, 
  $S(t)$ is uniquely determined for 
  $t \in[0, t_0]$ $(t_0 \in[0, \infty))$.
\end{coro}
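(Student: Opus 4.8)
The plan is to deduce Corollary~\ref{coro3.2} from Corollary~\ref{coro2.2} by the same reduction that yields Theorem~\ref{theo3.2} from Theorem~\ref{theo2.2}, and Corollary~\ref{coro3.1} from Corollary~\ref{coro2.1} and Theorem~\ref{theo3.1}. Fix a sample path $\omega \in \Omega$. The first step is to recognise that a solution $S(t) = (S_1(t), S_2(t), S_3(t))$ of (\ref{eq3.4}) is exactly the triple of random times attached to the truncated system (\ref{eq2.8}): if one sets
\[
  X_1^{(M)}(t) = X_1^{(M)}(0) + N_{12}^v(S_1(t)) - N_{31}(S_3(t)),
\]
and likewise $X_2^{(M)}(t) = X_2^{(M)}(0) + N_{23}(S_2(t)) - N_{12}^v(S_1(t))$ and $X_3^{(M)}(t) = X_3^{(M)}(0) + N_{31}(S_3(t)) - N_{23}(S_2(t))$, then by the very definition of $\Phi_\omega^v$ the system (\ref{eq3.4}) asserts that each $S_i(t)$ equals $\tfrac{\lambda}{M}$ times the integral over $[0,t]$ of the product of the two relevant components $X_k^{(M)} X_{k+1}^{(M)}$; that is, $(X_1^{(M)}, X_2^{(M)}, X_3^{(M)})$ solves (\ref{eq2.8}). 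Conversely, from any solution of (\ref{eq2.8}) one recovers the $S_i$ by these same formulas. This is precisely the translation carried out in the proof of Theorem~\ref{theo3.1}, with $N_{12}$ replaced throughout by $N_{12}^v$.

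With this dictionary in hand, the second step is simply to apply Corollary~\ref{coro2.2}, which gives, for the fixed sample path, a unique solution of (\ref{eq2.8}) on $[0, t_0]$. Since the correspondence between solutions of (\ref{eq2.8}) and solutions of (\ref{eq3.4}) displayed above is a bijection that commutes with restriction to $[0, t_0]$, uniqueness of $(X_1^{(M)}, X_2^{(M)}, X_3^{(M)})$ there transfers verbatim to uniqueness of $S(t)(\omega)$ on $[0, t_0]$, which is the content of the corollary. No new estimate is required; one only halts the step-by-step induction of Theorem~\ref{theo2.2} once a jump time $\sigma(I)$ passes $t_0$.

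The only point that asks for genuine care --- and hence where I would expect to dwell --- is the legitimacy of that truncation: one must check that the recursive prescription (\ref{eq2.7}) for $\sigma(I)$ and for the counters $K^{jj+1}(I)$, together with the case analysis [Case~A']$\sim$[Case~D'] and [Case~a']$\sim$[Case~d'], never consults the sample path beyond the current step $\sigma(I)$. This is immediate from the form of (\ref{eq2.7}), in which $\sigma(I)$ is determined by the data at step $I-1$ and by the next relevant Poisson jump times alone, together with the fact that each $\tfrac{\lambda}{M}\int_0^t X_k^{(M)}(s) X_{k+1}^{(M)}(s)\,ds$ is continuous and monotonically non-decreasing and therefore forces a unique continuation. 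Consequently the finitely many steps needed to cover $[0, t_0]$ are unaffected by what happens afterwards, the truncated construction coincides with the full one restricted to $[0, t_0]$, and Corollary~\ref{coro3.2} follows by applying Corollary~\ref{coro2.2} to Theorem~\ref{theo3.2}, exactly as Corollary~\ref{coro3.1} was obtained from Corollary~\ref{coro2.1} and Theorem~\ref{theo3.1}.
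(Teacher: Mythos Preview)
Your proposal is correct and follows essentially the same approach as the paper: the paper's proof is the single line ``Corollary~\ref{coro2.2} and Theorem~\ref{theo3.2} lead the present corollary,'' and you have spelled out precisely this reduction (the bijection between solutions of (\ref{eq3.4}) and (\ref{eq2.8}), then invoking Corollary~\ref{coro2.2}) with more care than the paper itself provides. Your additional discussion of why the step-by-step construction can be truncated at $t_0$ is a helpful elaboration rather than a deviation.
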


\begin{proof}\normalfont
  Corollarly~\ref{coro2.2} and Theorem~\ref{theo3.2} lead 
  the present corollarly. \hfill $\square$
\end{proof}

\begin{lemma}\label{lem3.1}
  $S(t)$ is $\mathcal{F}_v^{12}$-measurable.
\end{lemma}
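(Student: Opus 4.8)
The plan is to follow the path-by-path construction of the solution that is carried out in the proof of Theorem~\ref{theo2.2}, and to propagate $\mathcal{F}_v^{12}$-measurability through each step of that construction; a soft argument via Picard iteration $S^{(n+1)}(t)=\int_0^t\Phi^v_\omega(S^{(n)}(s))\,ds$ would make each iterate trivially $\mathcal{F}_v^{12}$-measurable, but $\Phi^v_\omega$ is only piecewise constant (hence discontinuous) in the space variables, so convergence of the iterates is unclear and the explicit construction is the safer route. As a preliminary I would record that $\Phi^v$ is ``adapted'' to $\mathcal{F}_v^{12}$: for each fixed $(x_1,x_2,x_3)\in\mathbb{R}_+^3$ the map $\omega\mapsto\Phi^v_\omega((x_1,x_2,x_3))$ is $\mathcal{F}_v^{12}$-measurable, and $\Phi^v$ is jointly $\mathcal{F}_v^{12}\otimes\mathcal{B}(\mathbb{R}_+^3)$-measurable. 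Indeed, the $X_i^{(M)}(0)$ are $\mathcal{F}_v^{12}$-measurable by definition of the reference family; $N_{23}(x_2)$ and $N_{31}(x_3)$ are $\mathcal{F}_v^{12}$-measurable because $\mathcal{F}_v^{12}$ contains $\sigma(N_{23}(u):u\ge0)$ and $\sigma(N_{31}(u):u\ge0)$; and $N_{12}^v(x_1)=N_{12}(x_1\wedge v)$ is $\sigma(N_{12}(s):0\le s\le v)$-measurable. Joint measurability follows since each counting process, written as $N(x)=\sum_k\mathbf{1}\{\tau_k\le x\}$ with measurable jump times $\tau_k$, is jointly measurable in $(\omega,x)$; consequently, if $Y$ is an $\mathcal{F}_v^{12}$-measurable $\mathbb{R}_+^3$-valued random vector, then $\omega\mapsto\Phi^v_\omega(Y(\omega))$ is $\mathcal{F}_v^{12}$-measurable.

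Next I would run the construction of Theorem~\ref{theo2.2} for the system~(\ref{eq3.4}) --- which is of the type~(\ref{eq2.8}), with $S_1,S_2,S_3$ playing the role of the clocks $T_{12}^{(M)},T_{23}^{(M)},T_{31}^{(M)}$ and the coordinates of $\Phi^v_\omega(S(\cdot))$ playing the role of the products $\frac{\lambda}{M}X_j^{(M)}X_{j+1}^{(M)}$ --- and prove by induction on the step index $I$ that $\sigma(I)$, the counters $K^{jj+1}(I)$ $(j=1,2,3)$, and the solution $S(\cdot)$ restricted to $[0,\sigma(I)]$ are all $\mathcal{F}_v^{12}$-measurable. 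The base case $I=0$ is trivial, since $S(0)=0$, $\sigma(0)=0$, $K^{jj+1}(0)=0$. In the inductive step, the cases [Case A'1]$\sim$[Case D'] of Theorem~\ref{theo2.2} are distinguished by the signs of the various $S_j(\sigma(l))$ and of the coordinates of $\Phi^v_\omega$, hence by $\mathcal{F}_v^{12}$-measurable events; the formula for $\sigma(I)$ (the analogue of~(\ref{eq2.7}) used there) is a minimum of sums and quotients of $\sigma(I-1)$, $S(\sigma(I-1))$, the coordinates of $\Phi^v_\omega(S(\sigma(I-1)))$, and certain jump times $\tau_i^{jj+1}$; and --- this is the point of introducing $N_{12}^v$ --- the only jump times of $N_{12}$ that ever appear are $\tau_1^{12},\dots,\tau_{K_v}^{12}$, which all lie in $[0,v]$ and are therefore $\mathcal{F}_v^{12}$-measurable, while all $\tau_i^{23}$ and $\tau_i^{31}$ are $\mathcal{F}_v^{12}$-measurable. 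Since $\sigma(I)$ and the updated state are obtained from these by minima, finite sums and quotients, the induction closes.

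Finally, fix $t\ge0$. Since the clocks grow at rate at most $\lambda M$, on $[0,t]$ the arguments of all counting processes stay in $[0,\lambda Mt]$, so only finitely many of the times $\sigma(I)$ fall in $[0,t]$; between consecutive such times $S$ is affine, $S(r)=S(\sigma(I-1))+\Phi^v_\omega(S(\sigma(I-1)))(r-\sigma(I-1))$ on $(\sigma(I-1),\sigma(I)]$, and the events $\{\sigma(I-1)<t\le\sigma(I)\}$ lie in $\mathcal{F}_v^{12}$, so $S(t)=\sum_I S(t)\,\mathbf{1}\{\sigma(I-1)<t\le\sigma(I)\}$ is a locally finite sum of $\mathcal{F}_v^{12}$-measurable terms (the case where the construction freezes, $\sigma(I)=\infty$, being subsumed). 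Hence $S(t)$ is $\mathcal{F}_v^{12}$-measurable for every $t$, which is the assertion of the lemma. The main obstacle is not conceptual but bookkeeping: because $\Phi^v_\omega$ is discontinuous one cannot invoke a general ``measurable dependence of ODE solutions on parameters'' theorem, so measurability must be verified directly along the explicit construction, with careful attention to the truncation at $v$ that confines every relevant $N_{12}$-jump time to $[0,v]$.
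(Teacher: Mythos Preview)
Your proposal is correct and follows the same underlying idea as the paper: both arguments rest on the explicit pathwise construction of Theorem~\ref{theo2.2} to exhibit $S(t)$ as a deterministic functional of $X^{(M)}(0)$, $N_{12}^v(\cdot)$, $N_{23}(\cdot)$, $N_{31}(\cdot)$, which are the generators of $\mathcal{F}_v^{12}$. The paper compresses this into the single assertion that ``there exists a non-random function $F^v$'' with $S(t)=F^v(t;X^{(M)}(0),N_{12}^v(u),N_{23}(u),N_{31}(u),u\ge0)$, whereas you carry out the inductive bookkeeping in full; your version is the detailed justification of what the paper states in one line.
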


\begin{proof}\normalfont
  Since $\Phi_\omega^v(x)$ is represented by the generators of 
  $\mathcal{F}_v^{12}$, $\Phi_\omega^v(x)$ is 
  $\mathcal{F}_v^{12}$-measurable. 
  
  There exists a non-random function $F^v$ such that
  \begin{align*}
    S(t)=F^v\left(t ; X^{(M)}(0), N_{12}^v(u), N_{23}(u), 
      N_{31}(u), u \geq 0\right),
  \end{align*}
  where $X^{(M)}(0)=(X_1^{(M)}(0), X_2^{(M)}(0), X_3^{(M)}(0))$. 
  As $S(t)$ is represented by the generators of 
  $\mathcal{F}_v^{12}$, $S(t)$ is $\mathcal{F}_v^{12}$-measurable.
  \hfill $\square$
\end{proof}

Now, we prove the following lemma.

\begin{lemma}\label{lem3.2}
  For each $j$, $t$ $(1 \leq j \leq 3, t \in[0, \infty))$, 
  $T_{j j+1}^{(M)}(t)$ is a stopping time with respect to the reference 
  family $(\mathcal{F}_t^{j j+1})_{t \geq 0}$.
\end{lemma}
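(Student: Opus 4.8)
The plan is to verify the definition directly: $T_{jj+1}^{(M)}(t)$ is a stopping time for $(\mathcal{F}_v^{jj+1})_{v\geq 0}$ precisely when $\{T_{jj+1}^{(M)}(t)\leq v\}\in\mathcal{F}_v^{jj+1}$ for every $v\geq 0$. By the cyclic symmetry of (\ref{eq2.2}) it is enough to handle $j=1$; for $j=2,3$ one repeats the argument after the obvious cyclic relabelling, i.e.\ after replacing $N_{12}^v$ in the definitions of $\Phi_\omega^v$ and of $S$ by $N_{23}^v$, respectively $N_{31}^v$. So fix $t\in[0,\infty)$ and $v\geq0$, let $S=(S_1,S_2,S_3)$ be the solution of (\ref{eq3.4}) attached to this $v$, and let $T^{(M)}=(T_{12}^{(M)},T_{23}^{(M)},T_{31}^{(M)})$ be the solution of (\ref{eq3.1}). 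The crux is the pathwise event identity
\[
\{T_{12}^{(M)}(t)\leq v\}=\{S_1(t)\leq v\}.
\]
Granting this, Lemma~\ref{lem3.1} makes $S(t)$, hence $S_1(t)$, $\mathcal{F}_v^{12}$-measurable, so $\{S_1(t)\leq v\}\in\mathcal{F}_v^{12}$, and by the identity $\{T_{12}^{(M)}(t)\leq v\}\in\mathcal{F}_v^{12}$, which is exactly what is required.

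To obtain the identity I would exploit two elementary facts: first, that both $T_{12}^{(M)}$ and $S_1$ are nondecreasing in the time variable, being integrals of nonnegative integrands — recall from the proofs of Theorems~\ref{theo2.1} and~\ref{theo2.2} that the coordinate processes stay nonnegative, so the first component of $\Phi_\omega^v$ along the solution is a product of two nonnegative factors; second, that $N_{12}^v$ and $N_{12}$ coincide on $[0,v]$. Suppose $T_{12}^{(M)}(t)\leq v$. Then monotonicity gives $T_{12}^{(M)}(s)\leq v$ for all $s\in[0,t]$, so on $[0,t]$ each argument of $N_{12}^v$ occurring in (\ref{eq3.4}) lies in $[0,v]$ and may be read through $N_{12}$; hence $T^{(M)}$ itself solves (\ref{eq3.4}) on $[0,t]$, and by uniqueness on a bounded interval (Corollary~\ref{coro3.2}) we get $T^{(M)}(s)=S(s)$ for $s\in[0,t]$, in particular $S_1(t)=T_{12}^{(M)}(t)\leq v$. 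Conversely, suppose $S_1(t)\leq v$; then $S_1(s)\leq v$ for all $s\in[0,t]$, so on $[0,t]$ replacing $N_{12}^v$ by $N_{12}$ in (\ref{eq3.4}) changes nothing, i.e.\ $S$ solves (\ref{eq3.1}) on $[0,t]$, and by Corollary~\ref{coro3.1} we get $S(s)=T^{(M)}(s)$ for $s\in[0,t]$, whence $T_{12}^{(M)}(t)=S_1(t)\leq v$. The two inclusions give the identity.

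The main obstacle is exactly this event identity, and inside it the step that on $\{T_{12}^{(M)}(t)\leq v\}$ the truncation at level $v$ is invisible to the dynamics over the whole of $[0,t]$; this is where the monotonicity of the random time change $T_{12}^{(M)}$ is indispensable, and where one must be careful to invoke the uniqueness Corollaries~\ref{coro3.1} and~\ref{coro3.2} on the interval $[0,t]$ with matching data for both systems. The remaining points — measurability of $S_1(t)$ via Lemma~\ref{lem3.1}, the reduction to $j=1$ by cyclic symmetry, and the passage from the set identity to the stopping-time property — are routine.
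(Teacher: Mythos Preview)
Your proposal is correct and follows essentially the same route as the paper: reduce to $j=1$ by symmetry, prove the pathwise event identity $\{T_{12}^{(M)}(t)\leq v\}=\{S_1(t)\leq v\}$ via monotonicity of the time changes together with the uniqueness Corollaries~\ref{coro3.1} and~\ref{coro3.2}, and then conclude from the $\mathcal{F}_v^{12}$-measurability of $S_1(t)$ given by Lemma~\ref{lem3.1}. The only cosmetic difference is that the paper cites Theorems~\ref{theo3.1} and~\ref{theo3.2} for the monotonicity where you trace it back to Theorems~\ref{theo2.1} and~\ref{theo2.2}; the content is the same.
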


\begin{proof}\normalfont
  We consider the case of $j=1$, for example. 
  To be proved is that, for any $v \in[0, \infty)$,
  \[
    \left(T_{12}^{(M)}(t) \leq v\right) 
      \equiv\left\{\omega ; T_{12}^{(M)}(t)(\omega) \leq v\right\} 
      \in \mathcal{F}_v^{12}.
  \]
    
  We claim that $(T_{12}^{(M)}(t) \leq v)=(S_1(t) \leq v)$.

  For any $\omega \in(T_{12}^{(M)}(t) \leq v)$, $T_{12}^{(M)}(s)$ is 
  a monotonously non-decreasing function for $s \geq 0$ 
  (Theorem~\ref{theo3.1}). 
  It follows that $0 \leq T_{12}^{(M)}(u) \leq T_{12}^{(M)}(t)$ for 
  $0 \leq u \leq t$ and that $N_{12}^v(T_{12}^{(M)}(u))
  =N_{12}(T_{12}^{(M)}(u))$ for $0 \leq u \leq t$. 
  Thus the solution of (\ref{eq3.1}) satisfies (\ref{eq3.4}). 
  By uniqueness of the solution of (\ref{eq3.4}) in $[0, t]$ 
  (Corollary~\ref{coro3.2}) we have $T_{12}^{(M)}(u)=S_1(u)$ for 
  $0 \leq u \leq t$. Thus $T_{12}^{(M)}(t)=S_1(t)$.
  
  Hence $\omega \in(S_1(t) \leq v)$. 
  It concludes that $(T_{12}^{(M)}(t) \leq v) \subset(S_1(t) \leq v)$. 
  \hfill $\sharp$

  For any $\omega \in(S_1(t) \leq v)$, $S_1(s)$ is a monotonously 
  non-decreasing function for $s \geq 0$ (Theorem~\ref{theo3.2}). 
  It follows that $0 \leq S_1(u) \leq S_1(t)$ for $0 \leq u \leq t$ 
  and that $N_{12}(S_1(u))=N_{12}^v(S_1(u))$ for $0 \leq u \leq t$. 
  Thus the solution of (\ref{eq3.4}) satisfies (\ref{eq3.1}). 
  By uniqueness of the solution of (\ref{eq3.1}) in $[0, t]$ 
  (Corollary~\ref{coro3.1}) we have $S_1(u) = T_{12}^{(M)}(u)$ for 
  $0 \leq u \leq t$. Thus $S_1(t)=T_{12}^{(M)}(t)$.
  
  Hence $\omega \in(T_{12}^{(M)} \leq v)$. 
  We conclude $(S_1(t) \leq v) \subset(T_{12}^{(M)}(t) \leq v)$. 
  \hfill $\sharp$ 
  
  Therefore the proof is completed.\hfill $\square$
\end{proof}

The martingale parts of $N_{j j+1}(t)$ with respect to the reference 
family $\sigma(N_{j j+1}(t)$: $0 \leq s \leq t$) for 
$1 \leq j \leq 3$ are represented as
\[
  \widetilde{N}_{j j+1}(t)=N_{j j+1}(t)-t.
\]

Since $X_1^{(M)}(0)$, $X_2^{(M)}(0)$, $X_3^{(M)}(0)$, 
$N_{12}(*)$, $N_{23}(*)$ and $N_{31}(*)$ are mutually independent, 
$\widetilde{N}_{j j+1}(t)$ is an $\mathcal{F}_t^{j j+1}$-martingale.

Put
\begin{align*}
  \mathcal{G}_t^{(M)}
    &=\sigma\left( X_j^{(M)}(0): j=1,2,3\right) \\
    &\quad \vee \sigma\left(N_{j j+1}
      \left(T_{j j+1}^{(M)}(s)\right):\quad 
      0 \leq s \leq t,~~~ j=1,2,3\right),
\end{align*}
and
\[
  \mathcal{H}_t^{(M)}
    =\sigma\left(X_j^{(M)}(s): 0 \leq s \leq t,\ j=1,2,3\right).
\]

We shall recall the general theory in Corollary to 
Theorem~\ref{theo3.2} of Chapter~I of Ikeda-Watnabe \cite{3}. 
We assume that $(\Omega,(\mathcal{F}_t^{j j+1})_{t \geq 0})$ is 
a standard measurable space for each $j$, $1 \leq j \leq 3$, 
and let $P$ be a probability on 
$(\Omega,(\mathcal{F}_t^{j j+1})_{t \geq 0})$. 
Let $\mathcal{G}$ be a sub $\sigma$-field of 
$(\mathcal{F}_t^{j j+1})_{t \geq 0}$ and 
$P_{\mathcal{G}}(\omega, \cdot)$ be a regular conditional 
probability given $\mathcal{G}$.
Let $\xi(\omega)$ be a mapping from $\Omega$ into a measurable space 
$(S, \mathcal{B})$ such that it is $\mathcal{G} 
/ \mathcal{B}$-measurable. We assume that $\mathcal{B}$ is countably 
determined and $\{x\} \in \mathcal{B}$ for every $x \in S$. Then
\begin{equation}
  P_{\mathcal{G}}\left(\omega,\left\{\omega' ; 
    \xi\left(\omega'\right)=\xi(\omega)\right\}\right)=1 
    \quad
    a.a.\omega.
  \label{eq3.5}
\end{equation}
\begin{lemma}
  \label{lem3.3}
  $\mathcal{G}_t^{(M)} \subset \mathcal{F}_{T_{j j+1}^{(M)}(t)}
  ^{j j+1}$ for $t$, $t \geq 0$, and $j$, 
  $1 \leq j \leq 3$, where
\[
  \mathcal{F}_{T_{12}^{(M)}(t)}^{12}
    =\left\{S \in \mathcal{F}_{\infty}^{12}:
      \left(T_{12}^{(M)}(t) \leq u\right) 
      \cap S \in \mathcal{F}_u^{12} 
      \text { for any } u \geq 0\right\}.
\]
\end{lemma}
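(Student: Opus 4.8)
The plan is to verify that every generator of $\mathcal{G}_t^{(M)}$ is $\mathcal{F}_{T_{12}^{(M)}(t)}^{12}$-measurable; since the model is invariant under the cyclic relabelling $1\to 2\to 3\to 1$, it is enough to treat the case $j=1$. The generators of $\mathcal{G}_t^{(M)}$ are the initial values $X_i^{(M)}(0)$ and the variables $N_{kk+1}(T_{kk+1}^{(M)}(s))$ for $0\le s\le t$, $k=1,2,3$. The initial values are the easy part: $\sigma(X_i^{(M)}(0):1\le i\le 3)\subset\mathcal{F}_0^{12}$, and because $T_{12}^{(M)}(t)$ is a stopping time of $(\mathcal{F}_u^{12})_{u\ge 0}$ (Lemma~\ref{lem3.2}) one checks directly, from the definition of $\mathcal{F}_{T_{12}^{(M)}(t)}^{12}$, that $\mathcal{F}_0^{12}\subset\mathcal{F}_{T_{12}^{(M)}(t)}^{12}$.

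For the remaining generators I would fix $u\ge 0$, set $A_u:=(T_{12}^{(M)}(t)\le u)\in\mathcal{F}_u^{12}$, and let $S^{(u)}(\cdot)(\omega)$ be the solution of (\ref{eq3.4}) with truncation parameter $v=u$. The key step — carried out exactly as in the proof of Lemma~\ref{lem3.2} — is that on $A_u$ the monotonicity of $T_{12}^{(M)}$ forces $T_{12}^{(M)}(s)\le u$ for all $s\le t$, so that $N_{12}^{u}(T_{12}^{(M)}(s))=N_{12}(T_{12}^{(M)}(s))$; hence $T^{(M)}$ restricted to $[0,t]$ solves (\ref{eq3.4}) and, by uniqueness (Corollary~\ref{coro3.2}), $T^{(M)}(s)(\omega)=S^{(u)}(s)(\omega)$ for all $s\in[0,t]$ and all $\omega\in A_u$. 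By Lemma~\ref{lem3.1} each coordinate of $S^{(u)}(s)$ is $\mathcal{F}_u^{12}$-measurable.

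With this in hand I would transfer measurability generator by generator. For $k=1$, on $A_u$ we have $N_{12}(T_{12}^{(M)}(s))=N_{12}^{u}(S_1^{(u)}(s))$, and since $x\mapsto N_{12}^{u}(x)=N_{12}(x\wedge u)$ depends only on $\{N_{12}(r):0\le r\le u\}\subset\mathcal{F}_u^{12}$ and is jointly measurable in $(x,\omega)$, this random variable is $\mathcal{F}_u^{12}$-measurable. For $k=2,3$, on $A_u$ we have $N_{kk+1}(T_{kk+1}^{(M)}(s))=N_{kk+1}(S_k^{(u)}(s))$, and the entire path of $N_{kk+1}$ is already $\mathcal{F}_0^{12}\subset\mathcal{F}_u^{12}$-measurable, so the composition is again $\mathcal{F}_u^{12}$-measurable. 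In every case we produce an $\mathcal{F}_u^{12}$-measurable $\eta$ agreeing with $N_{kk+1}(T_{kk+1}^{(M)}(s))$ on $A_u$, whence for any Borel set $B$, $A_u\cap\{N_{kk+1}(T_{kk+1}^{(M)}(s))\in B\}=A_u\cap\{\eta\in B\}\in\mathcal{F}_u^{12}$. As $u\ge 0$ is arbitrary and $N_{kk+1}(T_{kk+1}^{(M)}(s))$ is manifestly $\mathcal{F}_\infty^{12}$-measurable, it is $\mathcal{F}_{T_{12}^{(M)}(t)}^{12}$-measurable. Combining the two parts gives $\mathcal{G}_t^{(M)}\subset\mathcal{F}_{T_{12}^{(M)}(t)}^{12}$, and the cases $j=2,3$ follow by symmetry.

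The main obstacle I anticipate is the bookkeeping in the identification $T^{(M)}(s)=S^{(u)}(s)$ on $A_u$ and, relatedly, keeping straight which Poisson process enters which generator: for $k=1$ one must genuinely invoke the truncation $N_{12}^{u}$ (using $T_{12}^{(M)}(s)\le u$ on $A_u$) to remain inside $\mathcal{F}_u^{12}$, while for $k=2,3$ no truncation is needed because $N_{23}$ and $N_{31}$ are globally $\mathcal{F}_0^{12}$-measurable. The remaining manipulations of stopped $\sigma$-fields are routine.
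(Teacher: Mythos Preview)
Your argument is correct and complete, but it follows a genuinely different route from the paper's. The paper truncates at the \emph{random} level $T_{12}^{(M)}(t)$, defining $N_{12}^{[t]}(s)=N_{12}(s)\chi_{\{s\le T_{12}^{(M)}(t)\}}$, first checks directly that each of $N_{12}^{[t]}(u)$, $N_{23}(u)$, $N_{31}(u)$ is $\mathcal{F}_{T_{12}^{(M)}(t)}^{12}$-measurable, rewrites the system (\ref{eq3.1}) on $[0,t]$ in terms of $N_{12}^{[t]}$ (equation (\ref{eq3.6})), and then proves $E[F\mid\mathcal{F}_{T_{12}^{(M)}(t)}^{12}]=F$ for each generator $F$ by invoking regular conditional probabilities and the Ikeda--Watanabe fact (\ref{eq3.5}). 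You instead truncate at each \emph{deterministic} level $u$, identify $T^{(M)}$ with $S^{(u)}$ on $A_u=(T_{12}^{(M)}(t)\le u)$ via the uniqueness Corollary~\ref{coro3.2}, and read off $\mathcal{F}_u^{12}$-measurability from Lemma~\ref{lem3.1} together with elementary joint-measurability of c\`adl\`ag paths. Your approach is more elementary---it sidesteps the standard-measurable-space hypothesis and the regular-conditional-probability machinery entirely---and it makes transparent why the cases $k=1$ versus $k=2,3$ differ. The paper's approach, by contrast, works at the level of the stopped $\sigma$-field itself rather than slice by slice, which is closer to how one would generalise the statement, but at the cost of heavier tools.
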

\begin{proof}\normalfont
  We consider the case of $\mathcal{G}_t^{(M)} \subset 
  \mathcal{F}_{T_{12}^{(M)}(t)}^{12}$.
  
  We define
  \[
    N_{12}^{[t]}(s)(\omega) 
      \equiv 
      \begin{cases}
        N_{12}(s)(\omega), 
          & \text { for } s \leq T_{12}^{(M)}(t)(\omega), \\[1mm]
        0, & \text { for } 
        s>T_{12}^{(M)}(t)(\omega).
      \end{cases}
  \]
  Since
  \[
    N_{12}^{[t]}(u)
      =N_{12}(u) \chi_{(u \leq T_{12}^{(M)}(t))},
  \]
    we have $(N_{12}^{[t]}(u) \leq a) \cap (T_{12}^{(M)}(t) \leq v) 
    \in \mathcal{F}_v^{12}$ for any $a \geq 0$. 
    Hence $N_{12}^{[t]}(u)$ is $\mathcal{F}_{T_{12}^{(M)}(t)}^{12}
    $-measurable. 
    We also have $(N_{23}(u) \leq a) \cap(T_{12}^{(M)}(t) \leq v) 
    \in \mathcal{F}_v^{12}$ 
    for any $a \geq 0$. 
    Hence $N_{23}(u)$ is 
    $\mathcal{F}_{T_{12}^{(M)}(t)}^{12}$-measurable. 
    Also $N_{31}(u)$ is $\mathcal{F}_{T_{12}^{(M)}(t)}^{12}$-measurable.
      
    We shall prove that $N_{12}(T_{12}^{(M)}(s)), 
    N_{23}(T_{23}^{(M)}(s))$ and $N_{31}(T_{31}^{(M)}(s))$ 
    is $\mathcal{F}_{T_{12}^{(M)}(t)}^{(12}$-\linebreak
    measurable, 
    for $0 \leq s \leq t$.

  \noindent  
  [Step 1] Put $F=N_{23}(T_{23}^{(M)}(s))$.

We claim that
\[
  E\left[F \mid \mathcal{F}_{T_{12}^{(M)}(t)}^{12}\right](\omega)=F(\omega).
\]

As the mapping in (\ref{eq3.5}), 
we take an $\mathcal{F}_{T_{12}^{(M)}(t)}^{12}$-measurable mapping
\[
  \xi\left(\omega'\right)
    =\left(N_{23}(u)\left(\omega'\right): u \geq 0\right).
\]
It follows that
\begin{align*}
  \lefteqn{E\left[F \mid \mathcal{F}_{T_{12}^{(M)}(t)}^{12}\right](\omega)} 
  \quad \\
    & =\int_{\Omega} P_{\mathcal{F}^{12}_{T_{12}^{(M)}(t)}}
    \left(\omega, d \omega'\right) 
      F\left(\omega'\right) \\
    & =\int_{\left\{\omega' ; \xi\left(\omega'\right)=\xi(\omega)\right\}} 
      P_{\mathcal{F}^{12}_{T_{12}^{(M)}(t)}}
      \left(\omega, d \omega'\right) 
      F\left(\omega'\right) \\
    & =\int_{\left\{\omega' ; \xi\left(\omega'\right)
      =\xi(\omega)\right\}} 
      P_{\mathcal{F}^{12}_{T_{12}^{(M)}(t)}}
      \left(\omega, d \omega'\right) 
        N_{23}\left(T_{23}^{(M)}(s)\left(\omega'\right), \omega\right) \\
    & =\int_{\Omega} P_{\mathcal{F}^{12}_{T_{12}^{(M)}(t)}}
    \left(\omega, d \omega'\right) f\left(T_{23}^{(M)}(s)
    \left(\omega'\right)\right),
\end{align*}
where $f(u)=N_{23}(u, \omega)$.

Similarly as in (\ref{eq3.1}), for $u, 0 \leq u \leq t$, we have
\begin{equation}
  \left\{\begin{aligned}
    T_{12}^{(M)}(u) 
    = & \frac{\lambda}{M} \int_0^u\left(X_1^{(M)}(0)+N_{12}^{[t]}
      \left(T_{12}^{(M)}(s)\right)-N_{31}\left(T_{31}^{(M)}(s)\right)\right) 
      \\
      & \hskip10mm 
      \left(X_2^{(M)}(0)+N_{23}\left(T_{23}^{(M)}(s)\right)-N_{12}^{[t]}
      \left(T_{12}^{(M)}(s)\right)\right) d s, \\
    T_{23}^{(M)}(u)
    = & \frac{\lambda}{M} \int_0^u\left(X_2^{(M)}(0)
      +N_{23}\left(T_{23}^{(M)}(s)\right)-N_{12}^{[t]}
      \left(T_{12}^{(M)}(s)\right)\right) \\
      & \hskip10mm 
      \left(X_3^{(M)}(0)+N_{31}\left(T_{31}^{(M)}(s)\right)
        -N_{23}\left(T_{23}^{(M)}(s)\right)\right) d s, \\
    T_{31}^{(M)}(u)
    = & \frac{\lambda}{M} \int_0^u\left(X_3^{(M)}(0)
    +N_{31}\left(T_{31}^{(M)}(s)\right)-N_{23}\left(T_{23}^{(M)}(s)\right)\right) 
    \\
     & \hskip10mm  \left(X_1^{(M)}(0)+N_{12}^{[t]}
     \left(T_{12}^{(M)}(s)\right)-N_{31}\left(T_{31}^{(M)}(s)\right)\right) d s, \\
  T^{(M)}(0)= &0 . 
  \end{aligned}\right.
\label{eq3.6}
\end{equation}
Hence there exists a non-random function $H$ from $[0, \infty)$ to $\mathbb{N}$ 
such that
\[
  f\left(T_{23}^{(M)}(s)\left(\omega'\right)\right)
    =H\left(s ; X^{(M)}(0), N_{12}^{[t]}\left(u, \omega'\right), 
      N_{23}\left(u, \omega'\right), 
      N_{31}\left(u, \omega'\right), u \geq 0\right).
\]
Therefore $f\left(T_{23}^{(M)}(s)\left(\omega'\right)\right)$ 
is $\mathcal{F}_{T_{12}^{(M)}(t)}^{12}$-measurable.
\begin{align*}
& \int_{\Omega} P_{\mathcal{F}^{12}_{T_{12}^{(M)}(t)}}\left(\omega, d \omega'\right) f\left(T_{23}^{(M)}(s)\left(\omega'\right)\right) \\
&\quad =f\left(T_{23}^{(M)}(s)(\omega)\right)(\omega) \\
&\quad =N_{23}\left(T_{23}^{(M)}(s)(\omega), \omega\right) \\
&\quad =F(\omega) .
\end{align*}
Hence the claim holds. 
It follows that $N_{23}(T_{23}^{(M)}(s))$ is 
$\mathcal{F}_{T_{12}^{(M)}(t)}^{12}$-measurable, for $0 \leq s \leq t$.

Similary, we prove that $N_{31}(T_{31}^{(M)}(s))$ is 
$\mathcal{F}_{T_{12}^{(M)}(t)}^{12}$-measurable, for $0 \leq s \leq t$.

\noindent
[Step 2] Put $G=N_{12}(T_{12}^{(M)}(s))$.

We claim that
\[
  E\left[G \mid \mathcal{F}_{T_{12}^{(M)}(t)}^{12}\right](\omega)=G(\omega).
\]

As the mapping in (\ref{eq3.5}), 
we take $\mathcal{F}_{T_{12}^{(M)}(t)}^{12}$-measurable mappings
\[
  \xi_1\left(\omega'\right)
    =\left(N_{12}^{[t]}(u)\left(\omega'\right): u \geq 0\right),
\]
and
\[
  \xi_2\left(\omega'\right)=T_{12}^{(M)}(t)\left(\omega'\right),
\]
and it is to be noted $T_{12}^{(M)}(t)$, which is the solution of (\ref{eq3.6}), 
is $\mathcal{F}_{T_{12}^{(M)}(t)}^{12}$-measurable. 

\noindent
We have
\begin{align*}
 \lefteqn{E\left[G \mid \mathcal{F}_{T_{12}^{(M)}(t)}^{12}\right](\omega)} 
 \quad \\
  & =\int_{\Omega} P_{\mathcal{F}^{12}_{ T_{12}^{(M)}(t)}}
    \left(\omega, d \omega'  \right) G\left(\omega'\right) \\
  & =\int_{\left\{\omega'; 
    \xi_1\left(\omega'\right)=\xi_1(\omega)\right\} 
    \cap\left\{\omega' ; \xi_2\left(\omega'\right)
    =\xi_2(\omega)\right\}} P_{\mathcal{F}^{12}_{ T_{12}^{(M)}(t)}}
    \left(\omega, d \omega'\right) G\left(\omega'\right) \\
  & =\int_{\left\{\omega'; \xi_1\left(\omega'\right)=\xi_1(\omega)\right\} 
    \cap\left\{\omega' ; 
    \xi_2\left(\omega'\right)=\xi_2(\omega)\right\}}
    P_{\mathcal{F}^{12}_{ T_{12}^{(M)}(t)}}
    \left(\omega, d \omega'\right) N_{12}
    \left(T_{12}^{(M)}(s)\left(\omega'\right), \omega\right) \\
  & =\int_{\Omega} P_{\mathcal{F}^{12}_{ T_{12}^{(M)}(t)}}
  \left(\omega,  d \omega'\right) g\left(T_{23}^{(M)}(s)
  \left(\omega'\right)\right),
\end{align*}
where $g(u)=N_{12}(u, \omega)$.

It is seen that $g(T_{12}^{(M)}(s)(\omega'))$ is 
$\mathcal{F}_{T_{12}^{(M)}(t)}^{12}$-measurable. Hence
\begin{align*}
  \lefteqn{\int_{\Omega} P_{\mathcal{F}^{12}_{ T_{12}^{(M)}(t)}} 
  \left(\omega, d \omega'\right)g\left( T_{12}^{(M)}(s)(\omega')\right)}
  \quad\\
    & =g\left(T_{12}^{(M)}(s)(\omega)\right)(\omega) \\
    & =N_{12}\left(T_{12}^{(M)}(s)(\omega), \omega\right) \\
    & =G(\omega).
\end{align*}
Hence the claim holds. It follows that $N_{12}(T_{12}^{(M)}(s))$ 
is $\mathcal{F}_{T_{12}^{(M)}(t)}^{12}$-measurable, for $0 \leq s \leq t$.

Therefore we see that
\[
  \mathcal{G}_t^{(M)} \subset \mathcal{F}_{T_{12}^{(M)}(t)}^{12}.
\]
Similarly, we prove $\mathcal{G}_t^{(M)} \subset 
\mathcal{F}_{T_{23}^{(M)}(t)}^{23}$ and $\mathcal{G}_t^{(M)} 
\subset \mathcal{F}_{T_{31}^{(M)}(t)}^{31}$. 
\hfill $\square$
\end{proof}

We set
\begin{align*}
&
  \mathcal{M}_{12}^{(M)}(*) 
    \equiv \tilde{N}_{12}\left(T_{12}^{(M)}(*)\right), \\
&
  \mathcal{M}_{23}^{(M)}(*) 
    \equiv \tilde{N}_{23}\left(T_{23}^{(M)}(*)\right), \\
&
  \mathcal{M}_{31}^{(M)}(*) 
    \equiv \widetilde{N}_{31}\left(T_{31}^{(M)}(*)\right).
\end{align*}
We denote $X^{(M)}(*)=(X_1^{(M)}(*), X_2^{(M)}(*), X_3^{(M)}(*))$.

\begin{theo}
  \label{theo3.3}
  The stochastic process $X^{(M)}(*)$ is $(\mathcal{G}_t^{(M)})
  _{t \geq 0   }$-semi-martingale such that
\[
  \left\{\renewcommand{\arraystretch}{1.5} \begin{array}{@{}l@{}}
    X_1^{(M)}(t)=X_1^{(M)}(0)
      +\left(\mathcal{M}_{12}^{(M)}(t)-\mathcal{M}_{31}^{(M)}(t)\right)
      +\left(T_{12}^{(M)}(t)-T_{31}^{(M)}(t)\right), \\
    X_2^{(M)}(t)=X_2^{(M)}(0)
      +\left(\mathcal{M}_{23}^{(M)}(t)-\mathcal{M}_{12}^{(M)}(t)\right)
      +\left(T_{23}^{(M)}(t)-T_{12}^{(M)}(t)\right), \\
    X_3^{(M)}(t)=X_3^{(M)}(0)
      +\left(\mathcal{M}_{31}^{(M)}(t)-\mathcal{M}_{23}^{(M)}(t)\right)
      +\left(T_{31}^{(M)}(t)-T_{23}^{(M)}(t)\right),
\end{array}\right.
\]
gives the Doob-Meyer decomposition and
\begin{enumerate}[{\protect\normalfont(i)}]
  \item 
    $\mathcal{M}_{j j+1}^{(M)}(t)$ are square-integrable 
    $(\mathcal{G}_t^{(M)})_{t \geq 0}$-martingales for $1 \leq j \leq 3$,
  \item 
    $T_{j j+1}^{(M)}(t)$ is continuous increasing 
    $(\mathcal{G}_i^{(M)})_{t \geq 0}$-adapted processes for $1 \leq j \leq 3$,
  \item 
    $\langle \mathcal{M}_{j j+1}^{(M)}(*)\rangle_t
    =T_{j j+1}^{(M)}(t)$ for $1 \leq j \leq 3$,
  \item 
  $\langle \mathcal{M}_{j j+1}^{(M)}(*), \mathcal{M}_{k k+1}^{(M)}(*)\rangle_t=0$ 
  \ for $1 \leq j$, $k \leq 3$, $j \neq k$.
\end{enumerate}
\end{theo}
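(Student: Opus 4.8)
\normalfont
The plan is to read the decomposition directly off the explicit solution of Section~\ref{sec2}, and then to obtain properties (i)--(iv) by applying the optional sampling theorem to the compensated Poisson processes $\widetilde N_{jj+1}(u)=N_{jj+1}(u)-u$ at the stopping times $T_{jj+1}^{(M)}(t)$, transporting everything to the filtration $(\mathcal G_t^{(M)})_{t\ge 0}$ via Lemma~\ref{lem3.2} and Lemma~\ref{lem3.3}. The decomposition itself is immediate: by (\ref{eq3.3}) we have $T_{jj+1}^{(M)}(t)=\frac{\lambda}{M}\int_0^t X_j^{(M)}(s)X_{j+1}^{(M)}(s)\,ds$, so writing $N_{jj+1}(T_{jj+1}^{(M)}(t))=\widetilde N_{jj+1}(T_{jj+1}^{(M)}(t))+T_{jj+1}^{(M)}(t)=\mathcal M_{jj+1}^{(M)}(t)+T_{jj+1}^{(M)}(t)$ and substituting into (\ref{eq2.2}) produces exactly the three displayed identities. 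Once (i) and (ii) are established, the martingale parts $\mathcal M_{12}^{(M)}-\mathcal M_{31}^{(M)}$, etc., are $(\mathcal G_t^{(M)})$-martingales and the bounded-variation parts $T_{12}^{(M)}-T_{31}^{(M)}$, etc., are continuous, hence predictable, $(\mathcal G_t^{(M)})$-adapted; uniqueness of the canonical decomposition of a special semi-martingale then shows this is the Doob--Meyer decomposition.

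For (ii): by (\ref{eq3.2}) each $X_j^{(M)}(s)$ is a function of the generators $X^{(M)}(0)$ and $N_{ii+1}(T_{ii+1}^{(M)}(u))$ ($u\le s$) of $\mathcal G_t^{(M)}$ whenever $s\le t$, so $T_{jj+1}^{(M)}(t)$ is $\mathcal G_t^{(M)}$-measurable; it is continuous and non-decreasing because its integrand $\frac{\lambda}{M}X_j^{(M)}X_{j+1}^{(M)}$ is non-negative (the non-negativity claim proved within Theorem~\ref{theo2.1}). Moreover $0\le T_{jj+1}^{(M)}(t)\le \lambda M t$, so by Lemma~\ref{lem3.2} each $T_{jj+1}^{(M)}(t)$ is a bounded $(\mathcal F_s^{jj+1})$-stopping time.

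For (i) and (iii): since $N_{jj+1}$ is a standard Poisson process and the other generators of $\mathcal F_s^{jj+1}$ are independent of it, $\widetilde N_{jj+1}(u)$ and $\widetilde N_{jj+1}(u)^2-u$ are $(\mathcal F_u^{jj+1})$-martingales, uniformly integrable on bounded $u$-intervals. Applying the optional sampling theorem at the bounded stopping times $T_{jj+1}^{(M)}(s)\le T_{jj+1}^{(M)}(t)$ shows that $\mathcal M_{jj+1}^{(M)}(t)$ and $\mathcal M_{jj+1}^{(M)}(t)^2-T_{jj+1}^{(M)}(t)$ are martingales relative to $(\mathcal F_{T_{jj+1}^{(M)}(t)}^{jj+1})_{t\ge 0}$; in particular $E[\mathcal M_{jj+1}^{(M)}(t)^2]=E[T_{jj+1}^{(M)}(t)]\le\lambda M t<\infty$, giving square-integrability. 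By Lemma~\ref{lem3.3}, $\mathcal G_t^{(M)}\subset\mathcal F_{T_{jj+1}^{(M)}(t)}^{jj+1}$, and $\mathcal M_{jj+1}^{(M)}(t)$ and $\mathcal M_{jj+1}^{(M)}(t)^2-T_{jj+1}^{(M)}(t)$ are $\mathcal G_t^{(M)}$-measurable (they are built from the generators $N_{jj+1}(T_{jj+1}^{(M)}(\cdot))$ of $\mathcal G^{(M)}$ and from $T_{jj+1}^{(M)}(\cdot)$, using (ii)). The tower property $E[\cdot\mid\mathcal G_s^{(M)}]=E[\,E[\cdot\mid\mathcal F_{T_{jj+1}^{(M)}(s)}^{jj+1}]\mid\mathcal G_s^{(M)}]$ then pushes the martingale property down from $(\mathcal F_{T_{jj+1}^{(M)}(t)}^{jj+1})$ to $(\mathcal G_t^{(M)})$, which is (i); and since $T_{jj+1}^{(M)}$ is a continuous, hence predictable, increasing $(\mathcal G_t^{(M)})$-adapted process, uniqueness of the Doob--Meyer decomposition identifies $\langle\mathcal M_{jj+1}^{(M)}\rangle_t=T_{jj+1}^{(M)}(t)$, which is (iii).

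Property (iv) is the step I expect to be the main obstacle. It suffices to show that for $j\ne k$ the product $\mathcal M_{jj+1}^{(M)}(\cdot)\,\mathcal M_{kk+1}^{(M)}(\cdot)$ is a $(\mathcal G_t^{(M)})$-martingale, for then $\langle\mathcal M_{jj+1}^{(M)},\mathcal M_{kk+1}^{(M)}\rangle\equiv 0$ by uniqueness of the Doob--Meyer decomposition. Equivalently, one shows the optional quadratic covariation $[\mathcal M_{jj+1}^{(M)},\mathcal M_{kk+1}^{(M)}]_t=\sum_{0<s\le t}\Delta\mathcal M_{jj+1}^{(M)}(s)\,\Delta\mathcal M_{kk+1}^{(M)}(s)$ vanishes identically; being trivially predictable it is then its own compensator, so the predictable covariation vanishes too. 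The content is therefore that $\mathcal M_{jj+1}^{(M)}$ and $\mathcal M_{kk+1}^{(M)}$ almost surely have no common jump time: a jump of $\mathcal M_{jj+1}^{(M)}$ at time $s$ forces $T_{jj+1}^{(M)}(s)$ to be a jump time of $N_{jj+1}$, and likewise for $k$; since each $T_{ii+1}^{(M)}$ is continuous and non-decreasing these times form a countable set. Working along the step-by-step construction of Theorem~\ref{theo2.1}, the event that clocks $jj+1$ and $kk+1$ fire at the same $\sigma(l)$ forces an exact linear relation between the next jump time of $N_{jj+1}$ after $T_{jj+1}^{(M)}(\sigma(l-1))$ and quantities determined by the history up to step $l-1$ together with the next jump time of $N_{kk+1}$; by the memoryless property of $N_{jj+1}$ and the independence of $N_{jj+1}$ and $N_{kk+1}$, that next jump time of $N_{jj+1}$ has a continuous conditional law, so the relation holds with conditional probability zero. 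Summing over the countably many steps and the three pairs gives $[\mathcal M_{jj+1}^{(M)},\mathcal M_{kk+1}^{(M)}]\equiv 0$ a.s., which is (iv). (Alternatively, (iv) can be obtained by a regular-conditional-probability computation in the spirit of the proof of Lemma~\ref{lem3.3}, conditioning on $\sigma(X^{(M)}(0),N_{ii+1}:i\ne k)$ together with $N_{kk+1}$ up to the relevant time.) \hfill$\square$
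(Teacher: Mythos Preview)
Your proof is correct and its architecture matches the paper's: optional sampling for $\widetilde N_{jj+1}$ at the bounded stopping times $T_{jj+1}^{(M)}(t)$ (Lemma~\ref{lem3.2}), followed by the tower property through $\mathcal G_t^{(M)}\subset\mathcal F^{jj+1}_{T_{jj+1}^{(M)}(t)}$ (Lemma~\ref{lem3.3}), is exactly the paper's Step~1. The technical execution of (iii)--(iv) differs slightly. For (iii) the paper invokes the general counting-process identity $\langle M\rangle_t=\int_0^t(1-\Delta A_s)\,dA_s$ with continuous compensator, whereas you apply optional sampling directly to the $(\mathcal F_u^{jj+1})$-martingale $\widetilde N_{jj+1}(u)^2-u$; both are valid and yours avoids appealing to an external formula. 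For (iv) both arguments pass through ``no common jumps of the time-changed processes'': the paper then observes that the sum $N_{jj+1}(T_{jj+1}^{(M)})+N_{kk+1}(T_{kk+1}^{(M)})$ is itself a counting process with continuous compensator and obtains orthogonality by polarization, while you conclude via $[\mathcal M_{jj+1}^{(M)},\mathcal M_{kk+1}^{(M)}]\equiv 0$. Your justification of the no-common-jumps claim, tracing it to a measure-zero linear relation among the Poisson jump times in the step-by-step construction of Theorem~\ref{theo2.1}, is in fact more careful than the paper's one-line ``Hence'', which tacitly passes from disjoint jump times of the \emph{untimed} processes $N_{jj+1},N_{kk+1}$ to disjoint jump times of the \emph{differently} time-changed processes---a step that is not automatic and that your argument actually supplies.
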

\begin{coro}\label{coro3.3}
  The process $X_j^{(M)}(*)$ are $(\mathcal{H}_t^{(M)})_{t \geq 0}$-semi-martingale. 
\end{coro}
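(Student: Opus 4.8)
The plan is to read the displayed identities off the definitions, to prove (i)--(iv) by transporting the standard facts about the Poisson martingales $\widetilde N_{jj+1}$ through the random time changes $T_{jj+1}^{(M)}$ by means of the optional sampling theorem together with Lemmas~\ref{lem3.2} and~\ref{lem3.3}, and finally to deduce the corollary by restricting to a subfiltration. For the identities themselves, note that $\widetilde N_{jj+1}(u)=N_{jj+1}(u)-u$ gives $N_{jj+1}(T_{jj+1}^{(M)}(t))=\mathcal{M}_{jj+1}^{(M)}(t)+T_{jj+1}^{(M)}(t)$; substituting this into (\ref{eq2.2}) (equivalently into (\ref{eq3.2})) and separating the $\mathcal{M}$-terms from the $T$-terms yields the three displayed equalities. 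These will constitute the Doob--Meyer decomposition once (i)--(iv) are in hand, since then each coordinate is $X_j^{(M)}(0)$ plus the square-integrable $(\mathcal{G}^{(M)}_t)$-martingale $\mathcal{M}_{jj+1}^{(M)}-\mathcal{M}_{j-1\,j}^{(M)}$ plus the continuous adapted finite-variation, hence predictable, process $T_{jj+1}^{(M)}-T_{j-1\,j}^{(M)}$, and such a decomposition of a special semimartingale is unique.

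Since $0\le X_j^{(M)}(s)X_{j+1}^{(M)}(s)\le M^2/4$, the time changes are bounded, $0\le T_{jj+1}^{(M)}(t)\le \lambda M t/4=:c_t<\infty$. By (\ref{eq3.2}) each $X_j^{(M)}(s)$ is a measurable functional of the generators of $\mathcal{G}^{(M)}_s$, hence $\mathcal{G}^{(M)}_s$-measurable, so $T_{jj+1}^{(M)}(t)=\frac{\lambda}{M}\int_0^t X_j^{(M)}(s)X_{j+1}^{(M)}(s)\,ds$ is a measurable functional of $(X^{(M)}(s))_{0\le s\le t}$ and therefore $\mathcal{G}^{(M)}_t$-measurable; it is obviously continuous and nondecreasing, which is (ii), and then $\mathcal{M}_{jj+1}^{(M)}(t)=N_{jj+1}(T_{jj+1}^{(M)}(t))-T_{jj+1}^{(M)}(t)$ is $\mathcal{G}^{(M)}_t$-measurable as well. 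For (i) and (iii), fix $j$ and $0\le s\le t$. By Lemma~\ref{lem3.2} the numbers $T_{jj+1}^{(M)}(s)\le T_{jj+1}^{(M)}(t)\le c_t$ are bounded $(\mathcal{F}^{jj+1}_u)$-stopping times, and both $\widetilde N_{jj+1}(u)$ and $\widetilde N_{jj+1}(u)^2-u$ are $(\mathcal{F}^{jj+1}_u)$-martingales that are uniformly integrable on $[0,c_t]$ (Doob's $L^2$ inequality; for the second, $\widetilde N_{jj+1}(u)$ has moments of every order). Optional sampling then gives $E[\mathcal{M}_{jj+1}^{(M)}(t)\mid \mathcal{F}^{jj+1}_{T_{jj+1}^{(M)}(s)}]=\mathcal{M}_{jj+1}^{(M)}(s)$ and $E[(\mathcal{M}_{jj+1}^{(M)}(t))^2-T_{jj+1}^{(M)}(t)\mid \mathcal{F}^{jj+1}_{T_{jj+1}^{(M)}(s)}]=(\mathcal{M}_{jj+1}^{(M)}(s))^2-T_{jj+1}^{(M)}(s)$. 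Because $\mathcal{G}^{(M)}_s\subset\mathcal{F}^{jj+1}_{T_{jj+1}^{(M)}(s)}$ by Lemma~\ref{lem3.3} and $\mathcal{M}_{jj+1}^{(M)}(s)$, $T_{jj+1}^{(M)}(s)$ are $\mathcal{G}^{(M)}_s$-measurable, a further conditioning on $\mathcal{G}^{(M)}_s$ and the tower property show that $\mathcal{M}_{jj+1}^{(M)}$ and $(\mathcal{M}_{jj+1}^{(M)})^2-T_{jj+1}^{(M)}$ are $(\mathcal{G}^{(M)}_t)$-martingales; $E[(\mathcal{M}_{jj+1}^{(M)}(t))^2]\le c_t$ gives square integrability, which is (i). Since $T_{jj+1}^{(M)}$ is continuous, nondecreasing, adapted, hence predictable, and vanishes at $0$, uniqueness of the Doob--Meyer decomposition of the submartingale $(\mathcal{M}_{jj+1}^{(M)})^2$ forces $\langle\mathcal{M}_{jj+1}^{(M)}\rangle_t=T_{jj+1}^{(M)}(t)$, i.e.\ (iii).

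Property (iv) is the main obstacle. For $j\ne k$ the $L^2$-martingales $\mathcal{M}_{jj+1}^{(M)}$ and $\mathcal{M}_{kk+1}^{(M)}$ are purely discontinuous, so $\langle\mathcal{M}_{jj+1}^{(M)},\mathcal{M}_{kk+1}^{(M)}\rangle$ is the predictable compensator of $[\mathcal{M}_{jj+1}^{(M)},\mathcal{M}_{kk+1}^{(M)}]_t=\sum_{u\le t}\Delta\mathcal{M}_{jj+1}^{(M)}(u)\,\Delta\mathcal{M}_{kk+1}^{(M)}(u)$, and it suffices to show the latter is almost surely identically zero, i.e.\ that the two processes never jump at the same instant. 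A jump of $\mathcal{M}_{jj+1}^{(M)}$ occurs only at a time $\sigma(I)$ of the construction in Theorem~\ref{theo2.1} at which the index $j$ realizes the minimum in (\ref{eq2.7}) (so that $K^{jj+1}$ increases and $T_{jj+1}^{(M)}$ crosses a jump of $N_{jj+1}$); hence a common jump would force the minimum in (\ref{eq2.7}) to be attained simultaneously at two distinct indices. Conditionally on the data of the construction up to the $(I-1)$st step and on the already-used Poisson arrival times, the two competing expressions in (\ref{eq2.7}) are affine functions of the next arrival times $\tau^{jj+1}_{K^{jj+1}(I-1)+1}$ and $\tau^{kk+1}_{K^{kk+1}(I-1)+1}$, which are conditionally independent with conditionally absolutely continuous laws, so their coincidence has conditional probability $0$; summing over the countably many steps and the finitely many index pairs keeps the probability $0$. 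Thus $[\mathcal{M}_{jj+1}^{(M)},\mathcal{M}_{kk+1}^{(M)}]\equiv 0$ a.s., its compensator vanishes, and (iv) holds. Pinning down the exact $\sigma$-field to condition on at each step and verifying the absolute continuity is the part demanding care; equivalently one could show directly, as for (i), that $\mathcal{M}_{jj+1}^{(M)}\mathcal{M}_{kk+1}^{(M)}$ is a $(\mathcal{G}^{(M)}_t)$-martingale using the independence of $N_{jj+1}$ and $N_{kk+1}$, but this reduces to the same no-simultaneous-jump estimate.

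Finally, Corollary~\ref{coro3.3} is immediate: each $X_j^{(M)}(s)$ is $\mathcal{G}^{(M)}_s$-measurable, so $\mathcal{H}^{(M)}_t\subset\mathcal{G}^{(M)}_t$, and $X^{(M)}$, being $(\mathcal{H}^{(M)}_t)$-adapted and, by Theorem~\ref{theo3.3}, a $(\mathcal{G}^{(M)}_t)$-semimartingale, is an $(\mathcal{H}^{(M)}_t)$-semimartingale, since a semimartingale adapted to a subfiltration remains a semimartingale for that subfiltration (alternatively, simply because $X^{(M)}$ is a bounded, right-continuous, integer-valued process of finite variation).
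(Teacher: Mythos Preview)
Your argument is correct and, for parts (i)--(ii) and the deduction of the corollary from the theorem, matches the paper's proof. The differences are in (iii) and especially (iv).

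For (iii) the paper does not go through the martingale $\widetilde N_{jj+1}^2-u$ and optional sampling. Instead it invokes the general counting-process identity $\langle M\rangle_t=\int_0^t(1-\Delta A_s)\,dA_s$ for a point process with compensator $A$; since here the compensator $T_{jj+1}^{(M)}$ is continuous, this immediately gives $\langle\mathcal{M}_{jj+1}^{(M)}\rangle_t=T_{jj+1}^{(M)}(t)$. Your route is equally valid and arguably more self-contained, since it does not call on that formula.

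For (iv) the paper's argument is considerably shorter than yours. It first observes that two independent Poisson processes almost surely share no jump time, hence the time-changed processes $N_{jj+1}(T_{jj+1}^{(M)}(\cdot))$ and $N_{kk+1}(T_{kk+1}^{(M)}(\cdot))$ share no jump time either; consequently their \emph{sum} is again a counting process with continuous compensator $T_{jj+1}^{(M)}+T_{kk+1}^{(M)}$, so the same formula gives $\langle\mathcal{M}_{jj+1}^{(M)}+\mathcal{M}_{kk+1}^{(M)}\rangle_t=T_{jj+1}^{(M)}(t)+T_{kk+1}^{(M)}(t)$, and expanding the left side via bilinearity forces $\langle\mathcal{M}_{jj+1}^{(M)},\mathcal{M}_{kk+1}^{(M)}\rangle_t=0$. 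Your approach---going back to the explicit step-by-step construction of Theorem~\ref{theo2.1} and arguing that a simultaneous minimizer in (\ref{eq2.7}) has conditional probability zero---establishes the same no-simultaneous-jumps fact by hand. This is more work, but it makes transparent exactly why the paper's one-line ``hence'' is justified (the paper is somewhat terse there: a common jump of the two time-changed processes at time $t$ corresponds to jumps of $N_{jj+1}$ and $N_{kk+1}$ at the \emph{different} internal times $T_{jj+1}^{(M)}(t)$ and $T_{kk+1}^{(M)}(t)$, so independence of the original Poisson processes is not literally what is being used). Your alternative remark that $X^{(M)}$ is a bounded right-continuous integer-valued process of finite variation already suffices for the bare statement of the corollary, and is simpler than anything in the paper.
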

\begin{proof}\normalfont
  \mbox{}\linebreak

  \noindent
  [Step 1] For the case of the process $\tilde{N}_{12}(T_{12}^{(M)}(*))$, 
  to be proved is that for any $t$, $u$, $0 \leq t<u$,
  \[
    E\left[\widetilde{N}_{12}\left(T_{12}^{(M)}(u)\right)
      -\widetilde{N}_{12}\left(T_{12}^{(M)}(t)\right) 
      \mid \mathcal{G}_t^{(M)}\right]=0.
  \]
    
  By virtue of the optional sampling theorem due to Doob, 
  $\widetilde{N}_{12}(T_{12}^{(M)}(t))$ is a martingale with respect to 
  $\mathcal{F}_{T_{12}^{(M)}(t)}^{12}$.
  
  By Lemma~\ref{lem3.3},
  \begin{align*}
  & E\left[\widetilde{N}_{12}\left(T_{12}^{(M)}(u)\right)
    -\widetilde{N}_{12}\left(T_{12}^{(M)}(t)\right) 
    \mid \mathcal{G}_t^{(M)}\right]\\
  &\quad
    = E\left[E\left[\widetilde{N}_{12}\left(T_{12}^{(M)}(u)\right)
    -\widetilde{N}_{12}\left(T_{12}^{(M)}(t)\right) 
    \mid \mathcal{F}_{T_{12}^{(M)}(t)}^{12}\right] \mid \mathcal{G}_t^{(M)}\right]
    =0.
  \end{align*}
  Therefore $\tilde{N}_{12}(T_{12}^{(M)}(t))$ is a 
  $\mathcal{G}_t^{(M)}$-martingale.

  In a similar way, $\widetilde{N}_{23}(T_{23}^{(M)}(t))$ and 
  $\widetilde{N}_{31}(T_{31}^{(M)}(t))$ are $\mathcal{G}_t^{(M)}$-martingales.

  \noindent
  [Step 2] We claim that
  \[
  \left\langle\tilde{N}_{j j+1}\left(T_{j j+1}^{(M)}(*)\right)
  \right\rangle _t=T_{j j+1}^{(M)}(t),
  \]
and
\[
  \left\langle \widetilde{N}_{j j+1}\left(T_{j j+1}^{(M)}(*)\right), 
    \widetilde{N}_{k k+1}\left(T_{k k+1}^{(M)}(*)\right)\right\rangle _t=0,
\]
for $1 \leq j$, $k \leq 3$ and $j \neq k$.

In general, for the counting process whose martingale part is $M_t$ 
and whose bound-ed variational part is $A_t$
\[
  \left\langle M\right\rangle _t
    =\int_0^t\left(1-\Delta A_s\right) d A_s.
\]

The counting process $N_{j j+1}(T_{j j+1}^{(M)}(*))$ has the continuous bounded 
variational part. Therefore
\[
\left\langle \widetilde{N}_{j j+1}\left(T_{j j+1}^{(M)}(*)\right)
 \right\rangle _t=T_{j j+1}^{(M)}(t).
\]

There are no two more jumps of the mutually independent Poisson processes\linebreak 
$N_{j j+1}(t)$ and $N_{k k+1}(t)$ $(j \neq k)$ at the same time $t$. 
Hence we have no two more jumps of the processes $N_{j j+1}(T_{j j+1}^{(M)}(t))$ 
and $N_{j j+1}(T_{k k+1}^{(M)}(t))$ $(j \neq k)$ at the same time $t$. 
Thus $N_{j j+1}(T_{j j+1}^{(M)}(*))+N_{k k+1}(T_{k k+1}^{(M)}(*))$ 
is also a counting process whose bounded variational part is continuous. 
Hence
\[
  \left\langle \tilde{N}_{j j+1}\left(T_{j j+1}^{(M)}(*)\right)
    +\tilde{N}_{k k+1}\left(T_{k k+1}^{(M)}(*)\right)\right\rangle _t
      =T_{j j+1}^{(M)}(t)+T_{k k+1}^{(M)}(t).
\]
On the other hand,
\begin{align*}
&  
  \left\langle \widetilde{N}_{j j+1}\left(T_{j j+1}^{(M)}(*)\right)
    +\widetilde{N}_{k k+1}\left(T_{k k+1}^{(M)}(*)\right)\right\rangle _t 
    \\
&  
  \quad = \left\langle \widetilde{N}_{j j+1}\left(T_{j j+1}^{(M)}(*)\right)
  \right\rangle _t + \left\langle \widetilde{N}_{j j+1}
  \left(T_{j j+1}^{(M)}(*)\right)\right\rangle _t \\
&\qquad
  +2\left\langle \widetilde{N}_{j j+1}\left(T_{j j+1}^{(M)}(*)\right), 
  \widetilde{N}_{k k+1}\left(T_{k k+1}^{(M)}(*)\right)\right\rangle _t.
\end{align*}
Therefore
\[
  \left\langle \tilde{N}_{j j+1}\left(T_{j j+1}^{(M)}(*)\right), 
    \tilde{N}_{k k+1}\left(T_{k k+1}^{(M)}(*)\right)\right\rangle _t=0.
\]
\hfill $\square $
\end{proof}

\section{A weak law of large numbers of model which has a certain stochastic structure}
\label{sec4}

From now on, the norm $\|x\|$ of the vector $x=(x_1, x_2, \cdots , x_n)$ is to 
mean $\sum_{1 \leq i \leq n}|x_i|$. 
We take an integer $i$ as in the region $1 \leq i \leq n$, and if $i=n$, 
then $i+1=1$ and if $i=1$, then $i-1=n$. 
And we take another integer in a similar way.

By the same method as in the queuing model by Kogan, Liptser, Shiryayev and Smorodinski \cite{8,9}, we show the general theorem of the weak law of large numbers 
with respect to a model which has a certain stochastic structure.

Let $z(t)=(z_1(t), z_2(t), \cdots , z_n(t))$ $(t \in[0, \infty))$ 
be a solution of the differential equation
\begin{equation}
  \left\{
    \begin{aligned}
      \frac{d z_1(t)}{d t} 
        & =f^{12}\left(z_1(t), z_2(t)\right)-f^{n 1}\left(z_n(t), z_1(t)\right), 
        \\
      \frac{d z_2(t)}{d t} 
        & =f^{23}\left(z_2(t), z_3(t)\right)-f^{12}\left(z_1(t), z_2(t)\right), 
        \\
        & \cdots \cdots \cdots \\
      \frac{d z_i(t)}{d t} 
        & =f^{i i+1}\left(z_i(t), z_{i+1}(t)\right)-f^{i-1 i}
        \left(z_{i-1}(t), z_i(t)\right), \\
        & \cdots \cdots \cdots \\
      \frac{d z_n(t)}{d t} 
        & =f^{n 1}\left(z_n(t), z_1(t)\right)
        -f^{n-1 n}\left(z_{n-1}(t), z_n(t)\right),
  \end{aligned}\right.
  \label{eq4.1}
\end{equation}
with the property $\inf _{0 \leq s \leq t} z_i(s)>0$ for $1 \leq i \leq n$ and 
$\sum_{i=1}^n z_i(0)=1$. Here $f^{j j+1}=f^{j j+1}(x, y)$ is a non-negative 
function on $[0, \infty)$ with local Lipschitz condition for each variable 
$x$, $y$.

For each $M>0$, the stochastic process $Z^{(M)}(*)$ is an 
$(\mathcal{H}_t^{(M)})_{t \geq 0}$-semi-martingale such that
\begin{enumerate}[(i)]
  \item 
    $Z_i^{(M)}(t)=Z_i^{(M)}(0)+\mathfrak{m}_i^{(M)}(t)+\mathfrak{a}_i^{(M)}(t)$,
  \item 
    $\mathfrak{m}_i^{(M)}(t)
    =\mathcal{M}_{i i+1}^{(M)}(t)-\mathcal{M}_{i-1 i}^{(M)}(t)$,
  \item 
    $\mathfrak{a}_i^{(M)}(t)
      =\mathcal{A}_{i i+1}^{(M)}(t)-\mathcal{A}_{i-1 i}^{(M)}(t)$,
  \item 
    $\mathcal{M}_{j j+1}^{(M)}(t)$ is a square-integrable 
    $(\mathcal{H}_t^{(M)})_{t \geq 0}$-martingale, 
  \item 
    $\mathcal{A}_{j j+1}^{(M)}(t)$ is a continuous increasing 
    $(\mathcal{H}_t^{(M)})_{t \geq 0}$-adapted process,
  \item 
    $\mathcal{A}_{j j+1}^{(M)}(t)
    =\int_0^t M \chi_{\{\frac{z_j^{(M)}(s)}{M}>0\}} 
      \chi_{\{\frac{z_{j+1}^{(M)}(s)}{M}>0\}} f^{j j+1}(\frac{z_j^{(M)}(s)}{M}, 
      \frac{z_{j+1}^{(M)}(s)}{M}) d s$,
  \item 
    $\left\langle \mathcal{M}_{j j+1}^{(M)}(*)\right\rangle _t
    =\mathcal{A}_{j j+1}^{(M)}(t)$,
  \item 
    $\left\langle \mathcal{M}_{j j+1}^{(M)}(*), 
      \mathcal{M}_{k k+1}^{(M)}(*)V\right\rangle _t=0$ for $j \neq k$,
\end{enumerate}
where $Z_i^{(M)}(0)>0$ for $1 \leq i \leq n$ and $\sum_{i=1}^n Z_i^{(M)}(0)=1$.

Put
\[
  \left\{
    \begin{aligned}
      Z^{(M)}(t) & =\left(Z_1^{(M)}(t), Z_2^{(M)}(t), 
        \cdots, Z_n^{(M)}(t)\right), \\
      z(t) & =\left(z_1(t), z_2(t), \cdots, z_n(t)\right).
\end{aligned}\right.
\]
We set the reference family $\mathcal{H}_t^{(M)}=\sigma(Z_j^{(M)}(s): 
0 \leq s \leq t, 1 \leq j \leq n)$. 
We introduce the random time $T_i^{(M)}=\inf \{t: \frac{Z_i^{(M)}(s)}{M} 
\leq \frac{2}{M}\}$ and $T_0^{(M)}=\min _{1 \leq i \leq n} T_i^{(M)}$.

\begin{lemma}\label{lem4.1}
  $T_i^{(M)}$ is a stopping time with respect to the reference family 
  $(\mathcal{H}_s^{(M)})_{s \geq 0}$ for each $1 \leq i \leq n$. 
  $T_0^{(M)}$ is a stopping time with respect to the reference 
  family $(\mathcal{H}_s^{(M)})_{s \geq 0}$.
\end{lemma}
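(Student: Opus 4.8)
The plan is to verify directly that $\{T_i^{(M)}\le t\}\in\mathcal{H}_t^{(M)}$ for every $t\ge 0$. This is more elementary than Lemma~\ref{lem3.2}, because here the filtration is generated by the very process whose hitting time is taken, so no optional-sampling device is needed. The two structural facts I would use are: (a) by the counting-process representation underlying the defining properties of $Z^{(M)}(*)$ (cf.\ Theorem~\ref{theo3.3}), the sample paths of $Z_i^{(M)}(*)$ are right-continuous, piecewise constant, and take values in a fixed finite set; and (b) $\mathcal{H}_t^{(M)}=\sigma(Z_j^{(M)}(s):0\le s\le t,\ 1\le j\le n)$ already contains every event of the form $\{Z_i^{(M)}(s)\in B\}$ with $s\le t$. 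So the whole task is to write $\{T_i^{(M)}\le t\}$ as a countable combination of such events.

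First I would show that the infimum defining $T_i^{(M)}$ is attained whenever finite: if $s^{*}=\inf\{s:\,Z_i^{(M)}(s)/M\le 2/M\}<\infty$, choose $s_n\downarrow s^{*}$ in that set; right-continuity forces $Z_i^{(M)}(s_n)\to Z_i^{(M)}(s^{*})$, hence $Z_i^{(M)}(s^{*})/M\le 2/M$ and $s^{*}$ itself lies in the set. Therefore $\{T_i^{(M)}\le t\}=\{\exists\,s\in[0,t]:\,Z_i^{(M)}(s)/M\le 2/M\}$. Next I would split off the right endpoint and approximate the rest by rationals: since a right-continuous path with values in a discrete set is constant on an interval $[s,s+\delta)$ to the right of each point $s$, a visit to the sublevel set strictly before $t$ entails a visit at some rational $q<t$. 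This yields
\[
\{T_i^{(M)}\le t\}=\bigl\{Z_i^{(M)}(t)/M\le 2/M\bigr\}\ \cup\ \bigcup_{q\in\mathbb{Q}\cap[0,t)}\bigl\{Z_i^{(M)}(q)/M\le 2/M\bigr\},
\]
a countable union of sets in $\mathcal{H}_t^{(M)}$, so $T_i^{(M)}$ is a stopping time. The assertion for $T_0^{(M)}$ is then immediate from $\{T_0^{(M)}\le t\}=\bigcup_{i=1}^{n}\{T_i^{(M)}\le t\}$, a finite union.

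The step requiring care — the only place where the argument could break down if treated carelessly — is the passage ``the path visits the (closed) sublevel set on $[0,t)$'' $\Rightarrow$ ``the path lies in it at a rational $<t$''. For a general right-continuous real process this can fail, because $\{x/M\le 2/M\}$ is closed but not open and approximation from the right need not preserve membership; it is precisely the discreteness (indeed finiteness) of the range of $Z_i^{(M)}$ that saves the argument, by making the path locally constant to the right of every time. I would therefore state that local-constancy observation explicitly and invoke the same fact to justify that the defining infimum is attained.
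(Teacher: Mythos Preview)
Your proof is correct and follows the same direct-verification approach as the paper, which writes $\{T_i^{(M)}\le s\}$ as a union of sublevel events $\{Z_i^{(M)}(r)/M\le 2/M\}$ for $r\le s$ (after splitting off the trivial case $Z_i^{(M)}(0)/M\le 2/M$) and then invokes the general theory for $T_0^{(M)}=\min_i T_i^{(M)}$. Your treatment is in fact more careful than the paper's: the paper takes the union over \emph{all} $r\le s$ without addressing countability, whereas you explicitly use right-continuity to show the infimum is attained and local constancy of the paths to reduce to a countable union over rationals plus the endpoint.
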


\begin{proof}\normalfont
  To be proved is
  \begin{equation}
    \left(T_i^{(M)} \leq s\right) 
      \equiv\left\{\omega ; T_i^{(M)}(\omega) \leq s\right\} 
        \in \mathcal{H}_s^{(M)}.
    \label{eq4.2}
  \end{equation}
  We decompose $(T_i^{(M)} \leq s)$ into
  \begin{align*}
    \left(T_i^{(M)} \leq s\right)
      = & \left\{\left(T_i^{(M)} \leq s\right) 
        \cap \left(\frac{Z_i^{(M)}(0)}{M} \leq \frac{2}{M}\right)\right\} \\
        & \cup\left\{\left(T_i^{(M)} \leq s\right) 
        \cap\left(\frac{Z_i^{(M)}(0)}{M}>\frac{2}{M}\right)\right\} .
  \end{align*}
  We have
  \[
    \left(T_i^{(M)} \leq s\right) 
      \cap \left(\frac{Z_i^{(M)}(0)}{M} 
      \leq \frac{2}{M}\right)=\left(\frac{Z_i^{(M)}(0)}{M} \leq \frac{2}{M}\right) 
      \in \mathcal{H}_0^{(M)} \subset \mathcal{H}_s^{(M)}.
  \]
  The second term is decomposed into
  \[
    \left(T_i^{(M)} \leq s\right) 
    \cap\left(\frac{Z_i^{(M)}(0)}{M}>\frac{2}{M}\right)
    =\cup_{r \leq s}\left(\frac{Z_i^{(M)}(r)}{M} 
    \leq \frac{2}{M}\right) \cap\left(\frac{Z_i^{(M)}(0)}{M}>\frac{2}{M}\right).
  \]
  Since $(\frac{Z_i^{(M)}(r)}{M} \leq \frac{2}{M}) \in \mathcal{H}_r^{(M)} 
  \subset \mathcal{H}_s^{(M)}$ and $(\frac{Z_i^{(M)}(0)}{M}>\frac{2}{M}) 
  \in \mathcal{H}_0^{(M)} \subset \mathcal{H}_s^{(M)}$, 
  the second term $(T_i^{(M)} \leq s) \cap(\frac{Z_i^{(M)}(0)}{M}>\frac{2}{M}) 
  \in \mathcal{H}_s^{(M)}$.

  Therefore (\ref{eq4.2}) holds.

  It follows from the general theory that 
  $T_0^{(M)}=\min _{1 \leq i \leq n} T_i^{(M)}$ is also a stopping time.

  \hfill \mbox{\qquad} $\square $
\end{proof}
\begin{theo}\label{theo4.1}
  We assume
  \[
    \lim _{M \rightarrow \infty}
    \left\|\frac{Z^{(M)}(0)}{M}-z(0)\right\| = 0 \quad \text { in probability. }
  \]
  Then for any $t \in(0, \infty)$
  \[
    \lim _{M \rightarrow \infty} \sup _{0 \leq s \leq t}
    \left\|\frac{Z^{(M)}(s)}{M}-z(s)\right\|=0 \quad \text { in probability. }
  \]
\end{theo}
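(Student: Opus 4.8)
The plan is to rescale, eliminate the martingale part, and close a Gronwall estimate on a localized version of the process, then remove the localization. Write $\bar Z^{(M)}(s)=Z^{(M)}(s)/M$. Since the coordinates of $Z^{(M)}$ are non-negative and their sum is conserved — the increments $\mathfrak m_i^{(M)}$ and $\mathfrak a_i^{(M)}$ cancel when summed cyclically over $i$ — the vector $\bar Z^{(M)}(s)$ stays in the compact simplex $\Delta=\{x\in\mathbb R_+^n:\sum_i x_i=1\}$; similarly $z(s)\in\Delta$, because the right-hand side of (\ref{eq4.1}) telescopes to $0$ and $\sum_i z_i(0)=1$. Consequently every $f^{jj+1}$ is evaluated only on the compact set $\{(x,y):x,y\ge0,\ x+y\le1\}$, on which it is bounded, say by $C$, and (being locally Lipschitz on a compact set) Lipschitz; putting $g_i(x)=f^{ii+1}(x_i,x_{i+1})-f^{i-1\,i}(x_{i-1},x_i)$, there is a constant $L$ with $\|g(x)-g(y)\|\le L\|x-y\|$ for $x,y\in\Delta$, and $z$ solves $\dot z=g(z)$.

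First I would show the rescaled martingales vanish. By properties (vii) and (viii) and the crude bound $\mathcal A_{jj+1}^{(M)}(t)\le CMt$, one gets $\langle M^{-1}\mathfrak m_i^{(M)}\rangle_t=M^{-2}\big(\mathcal A_{i\,i+1}^{(M)}(t)+\mathcal A_{i-1\,i}^{(M)}(t)\big)\le 2Ct/M$, so Doob's $L^2$ maximal inequality gives $E\big[\sup_{0\le s\le t}|M^{-1}\mathfrak m_i^{(M)}(s)|^2\big]\le 8Ct/M\to0$, whence $\sup_{0\le s\le t}\|M^{-1}\mathfrak m^{(M)}(s)\|\to0$ in probability.

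Now fix $t$ and $\varepsilon>0$, set $2\delta_0=\min_i\inf_{0\le s\le t}z_i(s)>0$ and $\delta=\min(\delta_0,\varepsilon)$, and let $T_0^{(M)}$ be the stopping time of Lemma~\ref{lem4.1}. Write $\Delta^{(M)}(s)=\|\bar Z^{(M)}(s\wedge T_0^{(M)})-z(s\wedge T_0^{(M)})\|$. For $r\le T_0^{(M)}$ all indicators in (vi) equal $1$ (since $\bar Z_j^{(M)}(r)>2/M>0$), so the drift of $\bar Z_i^{(M)}$ is exactly $\int_0^r g_i(\bar Z^{(M)}(u))\,du$; subtracting the integral form of (\ref{eq4.1}), taking norms and using the Lipschitz bound gives, for every $s\le t$,
\[
  \Delta^{(M)}(s)\ \le\ \|\bar Z^{(M)}(0)-z(0)\|+\sup_{0\le r\le t}\|M^{-1}\mathfrak m^{(M)}(r)\|+L\int_0^s\Delta^{(M)}(r)\,dr .
\]
Calling $R^{(M)}$ the sum of the first two (random) terms, which tends to $0$ in probability by the hypothesis on $Z^{(M)}(0)/M$ together with the previous paragraph, Gronwall's inequality yields $\sup_{0\le s\le t}\Delta^{(M)}(s)\le R^{(M)}e^{Lt}\to0$ in probability. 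Finally, on $\{T_0^{(M)}\le t\}$ some coordinate of $\bar Z^{(M)}$ is $\le2/M$ at time $T_0^{(M)}$, so $\Delta^{(M)}(t)=\Delta^{(M)}(T_0^{(M)})\ge2\delta_0-2/M\ge\delta$ as soon as $M\ge2/\delta_0$; hence $\{T_0^{(M)}\le t\}\subset\{\sup_{0\le s\le t}\Delta^{(M)}(s)\ge\delta\}$, while on $\{T_0^{(M)}>t\}$ one has $\bar Z^{(M)}(s)=\bar Z^{(M)}(s\wedge T_0^{(M)})$ for $s\le t$. Combining these two observations, $\{\sup_{0\le s\le t}\|\bar Z^{(M)}(s)-z(s)\|\ge\varepsilon\}\subset\{\sup_{0\le s\le t}\Delta^{(M)}(s)\ge\delta\}$, whose probability tends to $0$; since $\varepsilon$ was arbitrary this is the claim.

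The step I expect to be the main obstacle is the bootstrap hidden in the last paragraph: the indicator-free, Lipschitz form of the drift is available only up to $T_0^{(M)}$, yet the conclusion needs $T_0^{(M)}>t$ with probability tending to $1$, and this is itself obtained only after the Gronwall bound on the stopped process has been established. The estimate must therefore be carried out genuinely on $[0,t\wedge T_0^{(M)}]$ first, and the bound on $P(T_0^{(M)}\le t)$ deduced afterwards, rather than presupposing the limit; the remaining ingredients (Doob's inequality, Gronwall's lemma, and the compactness of $\Delta$) are routine.
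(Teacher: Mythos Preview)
Your argument is correct and follows essentially the same route as the paper: localize with the stopping time $T_0^{(M)}$, apply a Gronwall estimate on the stopped difference using the Lipschitz property of the $f^{jj+1}$ on the simplex, kill the martingale term via a maximal/Doob inequality (the paper uses Chebyshev plus the martingale inequality, you use Doob's $L^2$ version---same effect), and then bootstrap to show $P(T_0^{(M)}\le t)\to0$ from the stopped bound. Your explicit identification of the bootstrap as the delicate point matches exactly how the paper organizes claims (4.3) and (4.4).
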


\begin{proof}\normalfont
  We have
  \begin{align*}
   \lefteqn{\frac{Z_j^{(M)}(t)}{M}}\quad \\
      & =\frac{Z_j^{(M)}(0)}{M}
          +\frac{1}{M}\left(\mathcal{M}_{j j+1}^{(M)}(t)
          -\mathcal{M}_{j-1 j}^{(M)}(t)\right) \\
      & \quad+\int_0^t\left\{\chi_{\Big\{\frac{z_i^{(M)}(s)}{M}>0\Big\}} 
      \chi_{\Big\{\frac{z_{i+1}^{(M)}(s)}{M}>0\Big\}} 
      f^{j j+1}\left(\frac{Z_j^{(M)}(s)}{M}, \frac{Z_{j+1}^{(M)}(s)}{M}\right)
      \right. \\
      & \left.\quad-\chi_{\Big\{\frac{z_{i-1}^{(M)}(s)}{M}>0\Big\}} 
      \chi_{\Big\{\frac{z_i^{(M)}(s)}{M}>0\Big\}} f^{j-1 j}
      \left(\frac{Z_{j-1}^{(M)}(s)}{M}, \frac{Z_j^{(M)}(s)}{M}\right)\right\}
       ds.
  \end{align*}
  From the previous lemma, for any $t \in[0, \infty)$, 
  $\frac{Z_j^{(M)}(t \wedge T_0^{(M)})}{M}$ $(j=1,2, \cdots , n)$ 
  are decomposed into
  \begin{align*}
    \lefteqn{\frac{Z_j^{(M)}\left(t \wedge T_0^{(M)}\right)}{M}}\quad \\
      & =\frac{Z_j^{(M)}(0)}{M}+\frac{1}{M}\left(\mathcal{M}_{j j+1}^{(M)}
      \left(t \wedge T_0^{(M)}\right)-\mathcal{M}_{j-1 j}^{(M)}
      \left(t \wedge T_0^{(M)}\right)\right) \\
      & \quad+\int_0^{t \wedge T_0^{(M)}}\left\{f^{j j+1}
      \left(\frac{Z_j^{(M)}(s)}{M}, \frac{Z_{j+1}^{(M)}(s)}{M}\right)
      -f^{j-1 j}\left(\frac{Z_{j-1}^{(M)}(s)}{M}, 
      \frac{Z_j^{(M)}(s)}{M}\right)\right\} d s.
  \end{align*}
  By the assumption of the local Lipschitz condition, for $0<x_1<1$, $0<x_2<1$, 
  $0<y_1<1$ and $0<y_2<1$, 
  there exists a constant $C_{\mathit {Lipschitz }}$ such that
  \begin{align*}
    & \sup _{0<x_1<1,0<x_2<1} 
      \frac{\left|f^{j j+1}\left(x_1, y_1\right)
      -f^{j j+1}\left(x_2, y_2\right)\right|}{\left|x_1-x_2\right|} 
      \leq C_x^j, \\
    & \sup _{0<y_1<1,0<y_2<1} \frac{\left|f^{j j+1}\left(x_1, y_1\right)
      -f^{j j+1}\left(x_2, y_2\right)\right|}{\left|y_1-y_2\right|} 
      \leq C_y^j, \\
    & C_{Lipschitz}
      =2 \max \left\{C_x^1, C_x^2, \cdots, C_x^n, C_y^1, C_y^2, 
      \cdots, C_y^n\right\}.
  \end{align*}
  We estimate:
  \begin{align*}
    \lefteqn{\left|\frac{Z_j^{(M)}\left(t \wedge T_0^{(M)}\right)}{M}-z_j
      \left(t \wedge T_0^{(M)}\right)\right| }\quad \\
      & \leq\left|\frac{Z_j^{(M)}(0)}{M}-z_j(0)\right| \\
      & \quad+\frac{1}{M}\left\{\left|\mathcal{M}_{j j+1}^{(M)}
      \left(t \wedge T_0^{(M)}\right)-\mathcal{M}_{j-1 j}^{(M)}
      \left(t \wedge T_0^{(M)}\right)\right|\right\} \\
      & \quad+C_{\mathit{Lipschitz }} 
      \int_0^t\left|\frac{Z_j^{(M)}(s)}{M}-z_j(s)\right| d s.
  \end{align*}
  Put
  \[
    U_t^{(M)}=\left\|\frac{Z_j^{(M)}(t)}{M}-z_j(t)\right\|.
  \]
  We get the following estimation:
  \begin{align*}
    \lefteqn{\left\|U_{t \wedge T_0^{(M)}}^{(M)}\right\|}\quad \\
      & \leq\left\|U_0^{(M)}\right\|
        +\frac{1}{M}\left\|\mathcal{M}^{(M)}\left(t \wedge T_0^{(M)}\right)\right\|
        +C_{\mathit{Lipschitz}} \int_0^t\left\|U_s^{(M)}\right\| d s \\
        & \leq\left(\left\|U_0^{(M)}\right\|+\sup _{0 \leq s \leq t} 
        \frac{1}{M}\left\|\mathfrak{M}^{(M)}(s)\right\|\right) 
        e^{C_{\mathit{Lipschitz }} t}.
  \end{align*}
  Hence we get for any real number $\epsilon>0$,
  \begin{align*}
    & P\left(\arraycolsep=0pt
    \begin{array}{l}
      \displaystyle\sup _{0 \leq s \leq t \wedge T_0^{(M)}}
      \left\|U_s^{(M)}\right\|>\epsilon
    \end{array}
      \right) \\
    & \quad 
      \leq P\left(\sup _{0 \leq s \leq t}
      \left(\left\|U_0^{(M)}\right\|
      +\frac{1}{M}\left\|\mathfrak{M}^{(M)}(s)\right\|\right) 
      e^{C_{\mathit{Lipschits}}t}>\epsilon\right),
  \end{align*}
  and so,
  \begin{align*}
    \lefteqn{P\left(
    \arraycolsep=0pt
    \begin{array}{l}  
    \displaystyle \sup _{0 \leq s \leq t}
    \left\|U_s^{(M)}\right\|>\epsilon
    \end{array}
    \right)}\quad \\
     & \leq P\left(T_0^{(M)}<t\right)
     +P\left(\arraycolsep=0pt
    \begin{array}{l}
      \displaystyle \sup _{0 \leq s \leq t \wedge T_0^{(M)}}
     \left\|U_s^{(M)}\right\|>\epsilon, T_0^{(M)} \geq t
    \end{array}
     \right) \\
     & \leq P\left(T_0^{(M)}<t\right)
     +P\left(\arraycolsep=0pt
    \begin{array}{l}
      \displaystyle \sup _{0 \leq s \leq t \leq T_0^{(M)}}
     \left(\left\|U_0^{(M)}\right\|
     +\frac{1}{M}\left\|\mathfrak{M}^{(M)}(s)\right\|\right)
     >\epsilon e^{-C_{\mathit{Lipschitz }}t}
    \end{array}
     \right) .
  \end{align*}

  For any real number $\delta>0$ we claim
  \begin{equation}
    \lim _{M \rightarrow \infty} P\,\Bigg(\sup _{0 \leq s \leq t \leq T_0^{(M)}}\left(\left\|U_0^{(M)}\right\|+\frac{1}{M}\left\|\mathfrak{M}^{(M)}(s)\right\|\right)>\delta\Bigg)=0,
    \label{eq4.3}
  \end{equation}
  \begin{equation}
    \lim _{M \rightarrow \infty} P\left(T_0^{(M)}<t\right)=0.
    \label{eq4.4}
  \end{equation}
  We estimate (\ref{eq4.3}):
  \begin{align*}
    \lefteqn{P\,\Bigg(\sup _{0 \leq s \leq t \leq T_0^{(M)}}
    \left(\left\|U_0^{(M)}\right\|+\frac{1}{M}\left\|\mathfrak{M}^{(M)}
    (s)\right\|\right)>\delta\Bigg)} \quad \\
    & \leq P\left(\left\|U_0^{(M)}\right\|>\frac{\delta}{2}\right)
    +P\,\Bigg(\sup _{0 \leq s \leq t \leq T_0^{(M)}} 
    \frac{1}{M}\left\|\mathfrak{M}^{(M)}(s)\right\|>\frac{\delta}{2}
    \Bigg) \\
    & \leq P\left(\left\|U_0^{(M)}\right\|>\frac{\delta}{2}\right)
    +\sum_{1 \leq j \leq n} 
    P\,\Bigg(\sup _{0 \leq s \leq t} \frac{1}{M}
    \left|\mathcal{M}_{j j+1}^{(M)}(s)-\mathcal{M}_{j-1 j}^{(M)}(s)\right|>\frac{\delta}{2 n}, t \leq T_0^{(M)}\Bigg).
  \end{align*}
  By Chebyshev's inequality and the martingale inequality, we have
  \begin{align*}
    \lefteqn{P\,\Bigg(\sup _{0 \leq s \leq t \leq T_0^{(M)}} \frac{1}{M}\left|\mathcal{M}_{j j+1}^{(M)}(s)-\mathcal{M}_{j-1 j}^{(M)}(s)\right|>\frac{\delta}{2 n}\Bigg)} \quad \\
    & \leq \frac{2 n}{\epsilon} E\,\Bigg[
      \sup _{0 \leq s \leq t \leq T_0^{(M)}} \frac{1}{M}
      \left|\mathcal{M}_{j j+1}^{(M)}(s)-\mathcal{M}_{j-1 j}^{(M)}(s)
      \right|\Bigg] \\
    & \leq \frac{2 n}{\epsilon} \frac{C}{M^2} 
    E\left[\left\langle \mathcal{M}_{j j+1}^{(M)}(*)\right\rangle _t
    +\left\langle\mathcal{M}_{j-1 j}^{(M)}(*)\right\rangle _t\right] 
    \\
    & =\frac{2 n C}{\epsilon M} 
    E\left[\int_0^t f^{j j+1}\left(\frac{Z_j^{(M)}(v)}{M}, 
    \frac{Z_{j+1}^{(M)}(v)}{M}\right) d v
    +\int_0^t f^{j-1 j}\left(\frac{Z_{j-1}^{(M)}(v)}{M}, 
    \frac{Z_j^{(M)}(v)}{M}\right) d v\right] \\
    & \leq \frac{2 n C}{\epsilon M} 2 t 
    \max _{1 \leq j \leq n} \sup_{0<x<1,0<y<1} f^{j j+1}(x, y),
  \end{align*}
  where $C$ is a positive constant for the martingale inquality. 
  By letting $M$ tend to infinity, we see that (\ref{eq4.3}) holds.

  Now we estimate (\ref{eq4.4}). 
  We define the $\{1,2, \cdots , n\}$-valued function $i_s^{(M)}$ 
  such that $\frac{Z_{i_s^{(M)}}^{(M)}(s)}{M}=\min _{1 \leq l \leq n}
  \{\frac{Z_l^{(M)}(s)}{M}\}$ for $s \in[0, \infty)$. 
  Here we have the the relation
  \[
    \left\{T_0^{(M)}<t\right\} \subset\left\{T_0^{(M)} \leq t\right\} 
    \subset\left\{\inf _{0 \leq s \leq t \wedge T_0^{(M)}} 
    \frac{Z_{i_s^{(M)}}^{(M)}(s)}{M} \leq \frac{2}{M}\right\}.
  \]
  We estimate the third term: for any $s$, 
  $s \leq t \wedge T_0^{(M)}$,
  \begin{align*}
    \frac{Z_{i_s^{(M)}}^{(M)}(s)}{M} 
      & \geq z_{i_s^{(M)}}(s)
      -\left|z_{i_s^{(M)}}(s)-\frac{Z_{i_s^{(M)}}^{(M)}(s)}{M}\right|
       \\
      & \geq \inf _{0 \leq s \leq t} z_{i_s^{(M)}}(s)
      -\sup _{0 \leq s \leq t \wedge T_0^{(M)}}
      \left\|U_s^{(M)}\right\| \\
      & \geq r-\sup _{0 \leq s \leq t \wedge T_0^{(M)}}
      \left\|U_s^{(M)}\right\|,
  \end{align*}
  where $r=\inf _{0 \leq s \leq t} \min _{1 \leq i \leq n} z_i(s)$. 
  Hence we get
  \[
    \inf _{0 \leq s \leq t \wedge T_0^{(M)}} 
      \frac{Z_{i_s^{(M)}}^{(M)}(s)}{M} \geq r
      -\sup _{0 \leq s \leq t \wedge T_0^{(M)}}\left\|U_s^{(M)}\right\|.
  \]
  We have the relation
  \[
    \left\{T_0^{(M)}<t\right\} 
      \subset\Bigg\{r-\sup _{0 \leq s \leq t \wedge T_0^{(M)}}
      \left\|U_s^{(M)}\right\| \leq \frac{2}{M}\Bigg\}.
  \]
  Therefore we have the estimation:
  \[
    P\left(T_0^{(M)}<t\right) 
      \leq P\,\Bigg(\sup _{0 \leq s \leq t \wedge T_0^{(M)}}
      \left\|U_s^{(M)}\right\| \geq r-\frac{2}{M}\Bigg).
  \]
  It follows by (\ref{eq4.3}) that
  \[
    \lim _{M \rightarrow \infty} 
    P\,\Big(\sup _{0 \leq s \leq t \wedge T_0^{(M)}}
    \left\|U_s^{(M)}\right\|>\epsilon\Big)=0.
  \]
  This fact concludes (\ref{eq4.4}).

  Therefore
  \[
    \lim _{M \rightarrow \infty} P\Big(
      \sup _{0 \leq s \leq t}\left\|U_s^{(M)}\right\|>\epsilon
      \Big)=0,
  \]
  which complete the proof of Theorem~\ref{theo4.1}.
  \hfill $\square$
\end{proof}

\section{Application of the weak law of large numbers to\\ 
paper-scissors-stone model}
\label{sec5}

Let $u_1(t)$, $u_2(t)$, $u_3(t)$ be the solution of the deterministic 
system expressed by the defferential equation
\begin{equation}
  \left\{\renewcommand{\arraystretch}{2}
    \begin{array}{@{}l@{}}
      \dfrac{d u_1(t)}{d t}
        =\lambda\left(u_1(t) u_2(t)-u_3(t) u_1(t)\right), \\
      \dfrac{d u_2(t)}{d t}
        =\lambda\left(u_2(t) u_3(t)-u_1(t) u_2(t)\right), \\
      \dfrac{d u_3(t)}{d t}
        =\lambda\left(u_3(t) u_1(t)-u_2(t) u_3(t)\right).
  \end{array}\right.
  \label{eq5.1}
\end{equation}

Now, we shall discuss the convergence of $\frac{X_1^{(M)}(t)}{M}$, 
$\frac{X_2^{(M)}(t)}{M}$, $\frac{X_3^{(M)}(t)}{M}$ to $u_1(t)$, 
$u_2(t)$, $u_3(t)$, when $M$ tends to infinity.

By applying the previous general theorem to our model, we have the following theorem.

\begin{theo}
  \label{theo5.1}
  We assume the convergence in probability and conditions as
  \begin{align*}
    \left\{
    \begin{array}{@{}l@{}}
      \dlim _{M \rightarrow \infty}
        \left|\dfrac{X_1^{(M)}(0)}{M}-u_1(0)\right|=0, \\[5mm]
      \dlim _{M \rightarrow \infty}
        \left|\dfrac{X_2^{(M)}(0)}{M}-u_2(0)\right|=0, \\[5mm]
      \dlim _{M \rightarrow \infty}
        \left|\dfrac{X_3^{(M)}(0)}{M}-u_3(0)\right|=0, \\
      0<u_1(0)<1, \\
      0<u_2(0)<1, \\
      0<u_3(0)<1, \\
      u_1(0)+u_2(0)+u_3(0)=1 .
  \end{array}\right.\\
  \end{align*}
  Then for any $t \in(0, \infty)$
  \begin{align*}
    \left\{
    \begin{array}{@{}l@{}}\displaystyle
      \dlim _{M \rightarrow \infty} \sup _{0 \leq s \leq t}
        \left|\dfrac{X_1^{(M)}(s)}{M}-u_1(s)\right|=0, \\[5mm]
      \dlim _{M \rightarrow \infty} \sup _{0 \leq s \leq t}
        \left|\dfrac{X_2^{(M)}(s)}{M}-u_2(s)\right|=0, \\[5mm]
      \dlim _{M \rightarrow \infty} \sup _{0 \leq s \leq t}
        \left|\dfrac{X_3^{(M)}(s)}{M}-u_3(s)\right|=0.
      \end{array}\right.
  \end{align*}
\end{theo}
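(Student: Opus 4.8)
The plan is to read Theorem~\ref{theo5.1} off the general weak law of large numbers, Theorem~\ref{theo4.1}, by the specialization $n=3$, $Z^{(M)}(*)=X^{(M)}(*)$, $z(*)=\big(u_1(*),u_2(*),u_3(*)\big)$, and
\[
  f^{j j+1}(x,y)=\lambda\,x\,y \qquad (j=1,2,3).
\]
With this choice of $f$, the system~(\ref{eq4.1}) becomes literally~(\ref{eq5.1}), so the only work is to verify the three groups of hypotheses of Theorem~\ref{theo4.1}: that each $f^{j j+1}$ is non-negative and satisfies a local Lipschitz condition in each variable; that~(\ref{eq5.1}) possesses a solution $u$ with $\sum_i u_i(0)=1$ and $\inf_{0\le s\le t}u_i(s)>0$ for each $i$; and that $X^{(M)}(*)$ is an $(\mathcal{H}^{(M)}_t)_{t\ge0}$-semi-martingale with the decomposition and orthogonality properties (i)--(viii) of Section~\ref{sec4}. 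The initial-value hypothesis of Theorem~\ref{theo5.1} is already in exactly the form required by Theorem~\ref{theo4.1}.

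The first and third groups are essentially already in hand. For the first, $f^{j j+1}(x,y)=\lambda xy\ge0$, and on $[0,1]^2$ one has $\big|f^{j j+1}(x_1,y)-f^{j j+1}(x_2,y)\big|=\lambda|y|\,|x_1-x_2|\le\lambda|x_1-x_2|$ and similarly in $y$, so the local Lipschitz condition holds. For the third, Theorem~\ref{theo3.3} and Corollary~\ref{coro3.3} already give the Doob--Meyer decomposition $X_j^{(M)}(t)=X_j^{(M)}(0)+\big(\mathcal{M}^{(M)}_{j j+1}(t)-\mathcal{M}^{(M)}_{j-1 j}(t)\big)+\big(T^{(M)}_{j j+1}(t)-T^{(M)}_{j-1 j}(t)\big)$ with the $\mathcal{M}^{(M)}_{j j+1}$ square-integrable martingales, $\langle\mathcal{M}^{(M)}_{j j+1}\rangle_t=T^{(M)}_{j j+1}(t)$, $\langle\mathcal{M}^{(M)}_{j j+1},\mathcal{M}^{(M)}_{k k+1}\rangle_t=0$ for $j\ne k$, and $T^{(M)}_{j j+1}$ continuous increasing and adapted. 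To match items~(vi) and~(vii) I would take $\mathcal{A}^{(M)}_{j j+1}(t)=T^{(M)}_{j j+1}(t)$ and note that
\[
  M\,f^{j j+1}\!\Big(\tfrac{X_j^{(M)}(s)}{M},\tfrac{X_{j+1}^{(M)}(s)}{M}\Big)
  =\tfrac{\lambda}{M}\,X_j^{(M)}(s)\,X_{j+1}^{(M)}(s),
\]
so $\int_0^t M f^{j j+1}(\cdot)\,ds=T^{(M)}_{j j+1}(t)$; the indicator factors $\chi_{\{X_j^{(M)}(s)/M>0\}}\chi_{\{X_{j+1}^{(M)}(s)/M>0\}}$ in~(vi) change nothing, since $f^{j j+1}$ vanishes as soon as one of its arguments is zero.

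The one step that is not pure bookkeeping is producing the limit trajectory $u$ with the positivity property, and here I would exploit the two first integrals of~(\ref{eq5.1}). Adding the three equations gives $\frac{d}{dt}(u_1+u_2+u_3)=0$, so $u_1(t)+u_2(t)+u_3(t)\equiv1$; dividing the $i$-th equation by $u_i$ and adding gives $\frac{d}{dt}\log(u_1u_2u_3)=\lambda\big((u_2-u_3)+(u_3-u_1)+(u_1-u_2)\big)=0$, so $u_1(t)u_2(t)u_3(t)\equiv c$ with $c=u_1(0)u_2(0)u_3(0)>0$ by hypothesis. Local existence and uniqueness of $u$ follow from the local Lipschitz property; since $u_i(t)=u_i(0)\exp\!\big(\lambda\int_0^t(u_{i+1}(s)-u_{i-1}(s))\,ds\big)$ stays positive as long as the solution exists and $\sum u_i\equiv1$ keeps the trajectory in the closed unit simplex, the solution is bounded and hence global, and from $u_i=c/(u_{i+1}u_{i-1})\ge c$ we get $\inf_{0\le s\le t}u_i(s)\ge c>0$ for every $i$ and $t$ — precisely the positivity demanded in~(\ref{eq4.1}). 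With all hypotheses checked, Theorem~\ref{theo4.1} yields $\lim_{M\to\infty}\sup_{0\le s\le t}\|X^{(M)}(s)/M-u(s)\|=0$ in probability, and since $|x_i|\le\|x\|$ this gives the three componentwise statements of the theorem.

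I expect the conservation-law step to be the crux: everything else is either a one-line estimate or a transcription of Theorem~\ref{theo3.3} into the language of Section~\ref{sec4}, but without the invariance of $u_1+u_2+u_3$ and $u_1u_2u_3$ there would be no a priori reason for the deterministic limit to stay away from the coordinate hyperplanes, and it is exactly this positivity that makes the indicator cut-offs in the drift disappear and legitimizes the Gronwall-type estimate used inside the proof of Theorem~\ref{theo4.1}.
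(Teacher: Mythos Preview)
Your proposal is correct and follows essentially the same approach as the paper: apply Theorem~\ref{theo4.1} with $n=3$ and $f^{j j+1}(x,y)=\lambda xy$, the main point being to check that the deterministic limit stays uniformly positive via the two first integrals $u_1+u_2+u_3\equiv 1$ and $u_1u_2u_3\equiv u_1(0)u_2(0)u_3(0)>0$. Your write-up is in fact more explicit than the paper's (which simply asserts the two conservation laws and the Lipschitz bound without spelling out the intermediate steps or the verification of the semi-martingale structure from Theorem~\ref{theo3.3}), but the argument is the same.
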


\begin{proof}\normalfont
  The system of the ordinary differential equation has two constants 
  of motion that $u_1(t)+u_2(t)+u_3(t)=1$ and 
  $u_1(t) u_2(t) u_3(t)=u_1(0) u_2(0) u_3(0)$. 
  The condition $0<u_i(0)<1$ for $i=1,2,3$ concludes that 
  $\inf _{0 \leq s \leq t} u_i(t)>0$ for any 
  $t \geq 0$ $(i=1,2,3)$.

  It is sufficient that we prove the Lipschitz condition of 
  the previous theorem in our model.

  We set $f^{j j+1}(x, y)=h(x, y) \equiv \lambda x y$ and 
  $f(y', x, y)=h(x, y)-h(y', x)$. 
  For $0<y_1<1$, $0<y_2<1$, $0<y_1'<1$ and $0<y_2'<1$, 
  we get the estimate:
  \[
    \sup _{0<x_1<1,0<x_2<1} 
    \frac{|f(y_1', x_1, y_1)-f(y_2', x_2, y_2)|}{|x_1-x_2|} 
      \leq 8 \lambda .
  \]
  Hence we take the Lipschitz constant $C_{\text {Lipschitz }}$ 
  as $8 \lambda$.

  \hfill $\square$
\end{proof}

\section{A central limit theorem of model which has 
a certain stochastic structure}
\label{sec6}

Similarly as in the queuing model by Kogan, Liptser, Shiryayev and 
Smorodinski \cite{8,9}, we show the following central limit theorem 
with respect to the model in section~\ref{sec3}. 
This theorem is preliminary for the central limit theorem of the 
paper-scissors-stone model.

\begin{theo}
  \label{theo6.1}
  Let $z(t)=(z_1(t), z_2(t), \cdots, z_n(t))$ $(t \in[0, \infty))$ 
  be a solution of the differential equation~({\normalfont\ref{eq4.1}}), 
  that has the vector form as
  \[
    \frac{d z(t)}{d t}=f_0(z(t)),
  \]
  with the property $\inf _{0 \leq s \leq t} z_i(s)>0$ 
  for $1 \leq i \leq n$, where
  \[
    f_0^i(x_1, x_2, \cdots, x_n)
      =f^{i i+1}(x_i, x_{i+1})-f^{i-1 i}(x_{i-1} ; x_i) 
     ~~ \text { for } 1 \leq i \leq n .
  \]
  Here, $f^{j j+1}=f^{j j+1}(x, y)$ is a non-negative continuously 
  differentiable function on $[0, \infty)$ with local Lipschitz 
  condition of the derivatives 
  $f_x^{j j+1}=\frac{\partial f^{j j+1}}{\partial x}(x, y)$ 
  and $f_y^{j j+1}=\frac{\partial f^{i i+1}}{\partial y}(x, y)$ 
  with respect to each variable $x$, $y$. 
  Moreover, we impose the normalization of 
  $z_1(0)+z_2(0)+\cdots+z_n(0)=1$.

  For each $M>0$, the stochastic process $Z^{(M)}(*)$ has the same 
  stochastic structure as in Theorem~{\normalfont\ref{theo4.1}}. 
  Moreover we assume
  \[
    \lim _{M \rightarrow \infty} 
      \frac{Z^{(M)}(0)}{M}=z(0)\quad
       \textit {in \ probability. }
  \]
  Put
  \[
    V^{(M)}(t)=\sqrt{M}\left(\frac{Z^{(M)}(t)}{M}-z(t)\right).
  \]
  Let the sequence of random variables $\{V^{(M)}(0)\}_{M \geq 1}$ 
  converges weakly to a distribution $F$.
  
  Then the sequence of the probability distributions of the process 
  $V^{(M)}(t)$ converges weakly to the distribution of an 
  $\mathbb{R}^n$-valued Gaussian diffusion process 
  $V=(V(t))_{t \geq 0}$ defined by the stochastic differential 
  equation
  \[
    d V(t)=b(t) V(t) dt + c^{\frac{1}{2}}(t) d W(t),
  \]
  with an $\mathbb{R}^n$-valued Wiener process 
  $W=(W_t)_{t \geq 0}$, with the initial condition 
  $V(0)$ having the distribution $F$ and with a matrix
  \begin{align*}
    & b(t)
    =\left(\frac{\partial}{\partial x_j} f_0^i
    \left(z_1(t), z_2(t), \cdots , z_n(t)\right)
    \right)_{1 \leq i, j \leq n} \\
    & \quad
     \left(\renewcommand{\arraystretch}{2}
      \begin{array}{@{}cccccc@{}}
        \dfrac{\partial f^{12}}{\partial x}
          -\dfrac{\partial f^{n 1}}{\partial y} 
          & \dfrac{\partial f^{12}}{\partial y} & 0 & \ldots & 0 
          & -\dfrac{\partial f^{n 1}}{\partial x} \\
        -\dfrac{\partial f^{12}}{\partial x} 
          & \dfrac{\partial f^{23}}{\partial x}
          -\dfrac{\partial f^{12}}{\partial y} 
          & \dfrac{\partial f^{23}}{\partial y} & 0 & \ldots & 0 \\
        \cdots &\cdots &\cdots & \cdots & \cdots & \cdots \\
        \dfrac{\partial f^{n 1}}{\partial x} & 0 & \ldots &  0 
        & -\dfrac{\partial f^{n-1 n}}{\partial x} 
        & \dfrac{\partial f^{n 1}}{\partial x}
          -\dfrac{\partial f^{n-1 n}}{\partial y}
        \end{array}\right), \\
    & c(t)= \\
    & \left(\renewcommand{\arraystretch}{1.3}
      \begin{array}{@{}cccccc@{}}
        f^{12}+f^{n 1} & -f^{12} & 0 & \cdots & 0 & -f^{n 1} \\
        -f^{12} & f^{23}+f^{12} & -f^{23} & 0 & \cdots & \cdots \\
        \cdots& \cdots &\cdots & \cdots & \cdots \\
        -f^{n 1} & 0 & \cdots & 0 & -f^{n-1 n} & f^{n 1}+f^{n-1 n}
      \end{array}\right),
  \end{align*}
  where
  \begin{align*}
    & c_{j k}(t)=c_{k j}(t)= \\
    & \left\{\renewcommand{\arraystretch}{1.3}
      \begin{array}{l}
      0, \quad \text { \textit{for} } \quad 
        2 \leq|j-k| \leq n-2,\ 1 \leq j,\ k \leq n, \\
      -f^{j j+1}=-f^{j j+1}(z_j(t), z_{j+1}(t)), 
        \quad \text { \textit{for} } 
        \quad |j-k|=1,\ n-1,\ 1 \leq j,\ k \leq n, \\
      f^{j j+1}+f^{j-1 j}=f^{j j+1}(z_j(t), z_{j+1}(t))
        +f^{j-1 j}(z_{j-1}(t), z_j(t)), \\
        \text { \textit{for} } k=j,\ 1 \leq j,\ k \leq n .
      \end{array}\right.
    \end{align*}
\end{theo}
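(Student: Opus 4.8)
The plan is to follow the semimartingale / martingale–central–limit route of Kogan--Liptser--Shiryayev--Smorodinski, starting from the Doob--Meyer decomposition already available for $Z^{(M)}(*)$. First I would localize. With $T_0^{(M)}$ the stopping time of section~\ref{sec4}, Theorem~\ref{theo4.1} gives $T_0^{(M)}\to+\infty$ in probability, and on $[0,T_0^{(M)}]$ one has $Z_i^{(M)}(s)/M>2/M>0$, so the indicator factors in $\mathcal{A}_{jj+1}^{(M)}$ equal $1$; hence for $t\le T_0^{(M)}$
\[
\frac{Z_i^{(M)}(t)}{M}=\frac{Z_i^{(M)}(0)}{M}+\frac1M\mathfrak{m}_i^{(M)}(t)+\int_0^t f_0^i\Big(\tfrac{Z^{(M)}(s)}{M}\Big)\,ds .
\]
Subtracting $z(t)=z(0)+\int_0^t f_0(z(s))\,ds$ and multiplying by $\sqrt M$ yields, for $t\le T_0^{(M)}$, the identity $V^{(M)}(t)=V^{(M)}(0)+\tfrac1{\sqrt M}\mathfrak{m}^{(M)}(t)+\sqrt M\int_0^t\big(f_0(\tfrac{Z^{(M)}(s)}{M})-f_0(z(s))\big)\,ds$. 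Since $T_0^{(M)}\to\infty$ in probability it suffices to study this stopped identity and remove the stopping at the end.

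Next I would expand the drift. Each $f^{jj+1}$ is $C^1$ with locally Lipschitz derivatives, so $f_0$ is $C^1$ with locally Lipschitz derivative $b$, and on the compact set $\{z(s):0\le s\le t\}$ together with a neighbourhood (valid for $s\le t\wedge T_0^{(M)}$ since $\inf_s z_i(s)>0$) one has $f_0(\tfrac{Z^{(M)}(s)}{M})-f_0(z(s))=b(s)\big(\tfrac{Z^{(M)}(s)}{M}-z(s)\big)+R^{(M)}(s)$ with $\|R^{(M)}(s)\|\le C\,\|\tfrac{Z^{(M)}(s)}{M}-z(s)\|^2$. Thus the drift term equals $\int_0^t b(s)V^{(M)}(s)\,ds+\sqrt M\int_0^t R^{(M)}(s)\,ds$, where $\|\sqrt M R^{(M)}(s)\|\le \tfrac{C}{\sqrt M}\|V^{(M)}(s)\|^2$. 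To kill the remainder I would first establish an a priori bound: feeding the stopped identity into Gronwall's inequality together with the martingale maximal inequality and items (vi)--(viii) (the predictable covariations of $M^{-1/2}\mathfrak{m}^{(M)}$ are bounded on $[0,t]$ because the $f^{jj+1}$ are bounded on the relevant compact set by Theorem~\ref{theo4.1}) gives $E\big[\sup_{s\le t\wedge T_0^{(M)}}\|V^{(M)}(s)\|^2\big]\le K_t$ uniformly in $M$; consequently $\sqrt M\int_0^{\cdot}R^{(M)}(s)\,ds\to0$ in probability, uniformly on compacts.

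Then I would handle the martingale term $\mathfrak{M}^{(M)}(t):=\tfrac1{\sqrt M}\mathfrak{m}^{(M)}(t)$. Using $\mathfrak{m}_i^{(M)}=\mathcal{M}_{ii+1}^{(M)}-\mathcal{M}_{i-1\,i}^{(M)}$, the orthogonality (viii), and $\langle\mathcal{M}_{jj+1}^{(M)}\rangle_t=\mathcal{A}_{jj+1}^{(M)}(t)$, the predictable covariation $\langle\mathfrak{M}_i^{(M)},\mathfrak{M}_k^{(M)}\rangle_t$ is $M^{-1}$ times the tridiagonal-with-corners combination of the $\mathcal{A}^{(M)}$'s ($\mathcal{A}_{ii+1}^{(M)}+\mathcal{A}_{i-1\,i}^{(M)}$ on the diagonal, $-\mathcal{A}_{ii+1}^{(M)}$ for $|i-k|=1$ or $n-1$, $0$ otherwise). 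Since $M^{-1}\mathcal{A}_{jj+1}^{(M)}(t)=\int_0^t f^{jj+1}\big(\tfrac{Z_j^{(M)}(s)}{M},\tfrac{Z_{j+1}^{(M)}(s)}{M}\big)\,ds$, Theorem~\ref{theo4.1} gives $\langle\mathfrak{M}^{(M)}\rangle_t\to\int_0^t c(s)\,ds$ uniformly in probability, with $c$ the matrix in the statement. The jumps of $\mathfrak{M}^{(M)}$ have size $M^{-1/2}\to0$, so the conditional Lindeberg condition holds; by the martingale functional central limit theorem (Rebolledo), $\mathfrak{M}^{(M)}$ converges weakly in $D([0,\infty);\mathbb{R}^n)$ to $\int_0^{\cdot}c^{1/2}(s)\,dW(s)$ for an $\mathbb{R}^n$-valued Wiener process $W$ independent of the limit of $V^{(M)}(0)$.

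Finally I would combine these pieces. The a priori bound of the second step, together with the tightness of $\mathfrak{M}^{(M)}$ and the fact that the drift part is Lipschitz in $V^{(M)}$, gives tightness of $\{V^{(M)}\}$ in $D([0,\infty);\mathbb{R}^n)$. Along any weakly convergent subsequence, passing to the limit in the stopped identity (using $T_0^{(M)}\to\infty$, the uniform-in-probability convergences above, and the stability of the stochastic integral under this weak convergence) shows that every limit point $(V,W)$ satisfies $V(t)=V(0)+\int_0^t b(s)V(s)\,ds+\int_0^t c^{1/2}(s)\,dW(s)$ with $V(0)\sim F$ independent of $W$; this linear stochastic differential equation with continuous deterministic coefficients has a pathwise-unique, hence law-unique, solution, so all subsequential limits agree and the full sequence of laws of $V^{(M)}(\cdot)$ converges weakly to the announced Gaussian diffusion. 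The main obstacle is the bootstrap in the second step: the a priori moment bound on $\sup_s\|V^{(M)}(s)\|$ is what allows one to discard the $\sqrt M\times(\text{quadratic remainder})$ term, yet that bound is itself extracted from the same identity via Gronwall; making this circular argument rigorous, and simultaneously verifying the Lindeberg condition and the continuity/stability of the stochastic integral along the weak limit, is where the real work lies.
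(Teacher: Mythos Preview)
Your route is essentially the paper's: semimartingale decomposition, Taylor/mean-value expansion of the drift using the local Lipschitz property of $f_x^{jj+1},f_y^{jj+1}$, convergence of the predictable covariations of $M^{-1/2}\mathfrak{m}^{(M)}$ to $\int_0^{\cdot}c(s)\,ds$ via Theorem~\ref{theo4.1}, the jump bound $\|\Delta V^{(M)}\|\le M^{-1/2}$ for the Lindeberg-type condition, and an a~priori control of $\sup_s\|V^{(M)}(s)\|$ obtained by Gronwall. The paper packages the last three items as conditions (A), (B), (C) of the Liptser--Shiryaev framework and then cites~\cite{9}; you invoke Rebolledo's functional CLT and a separate tightness argument, which amounts to the same thing. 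Your use of the stopping time $T_0^{(M)}$ to suppress the indicator factors is equivalent to the paper's Step~2/Step~3, where they show directly that $P\big(\int_0^T\chi_{\{Z_i^{(M)}(s)/M=0\}}\,ds>0\big)\to0$.

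One point to correct: you claim a uniform-in-$M$ second-moment bound $E\big[\sup_{s\le t\wedge T_0^{(M)}}\|V^{(M)}(s)\|^2\big]\le K_t$, but the hypotheses give only weak convergence of $V^{(M)}(0)$, not an $L^2$ bound, so this need not hold. Fortunately you only need tightness of $\sup_{s\le t}\|V^{(M)}(s)\|$ to make $\sqrt{M}\int_0^{\cdot}R^{(M)}(s)\,ds\to0$ in probability (since $\|\sqrt{M}R^{(M)}(s)\|\le CM^{-1/2}\|V^{(M)}(s)\|^2$), and tightness does follow from Gronwall plus tightness of $V^{(M)}(0)$ plus the $L^2$ bound on the martingale part. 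This is exactly what the paper proves as its display~(\ref{eq6.2}),
\[
\lim_{l\to\infty}\,\overline{\lim}_{M\to\infty}\,P\Big(\sup_{t\le T}\|V^{(M)}(t)\|\ge l\Big)=0,
\]
and it is the right form of the a~priori bound to aim for; there is no genuine circularity, because this estimate uses only the Lipschitz constant of $f_0$, not the second-order remainder.
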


\begin{proof}\normalfont
  We set 
  \begin{align*}
    & V_i^{(M)}(t) \\
    &\quad =V_i^{(M)}(0) \\
    &\qquad
     +\int_0^t \sqrt{M}\left(\chi_{\left\{\frac{z_i^{(M)}(s)}{M}>0\right\}} \chi_{\left\{\frac{z_{i+1}^{(M)}(s)}{M}>0\right\}} f^{i i+1}\left(\frac{Z_i^{(M)}(s)}{M}, \frac{Z_{i+1}^{(M)}(s)}{M}\right)\right. \\
     &\qquad
      \left.-f^{i i+1}\left(z_i(s), z_{i+1}(s)\right)\right) d s \\
     &\qquad
      -\int_0^t \sqrt{M}\left(\chi_{\left\{\frac{z_{i-1}^{(M)}(s)}{M}>0\right\}} \chi_{\left\{\frac{z_i^{(M)}(s)}{M}>0\right\}} f^{i-1 i}\left(\frac{Z_{i-1}^{(M)}(s)}{M}, \frac{Z_i^{(M)}(s)}{M}\right)\right. \\
     &\qquad \left.-f^{i-1 i}\left(z_{i-1}(s), z_i(s)\right)\right) d s \\
     &\qquad
      +\frac{1}{\sqrt{M}}\left(\mathcal{M}_{i i+1}^{(M)}(t)-\mathcal{M}_{i-1 i}^{(M)}(t)\right), \\
    &B_i^{(M)}(t)=\int_0^t \sqrt{M}\left(\chi_{\big\{\frac{z_i^{(M)}(s)}{M}>0\big\}} \chi_{\big\{\frac{z_{i+1}^{(M)}(s)}{M}>0\big\}} f^{i i+1}\left(\frac{Z_i^{(M)}(s)}{M}, \frac{Z_{i+1}^{(M)}(s)}{M}\right)\right. \\
      &\qquad\qquad -f^{i i+1}\left(z_i(s), z_{i+1}(s)\right) d s\\
      &\qquad\qquad -\int_0^t \sqrt{M}
        \left(\chi_{\big\{\frac{z_{i-1}^{(M)}(s)}{M}>0\big\}} 
        \chi_{\big\{\frac{z_i^{(M)}{(s)}}{M} >0\big\}} f^{i-1 i}
        \left(\frac{Z_{i-1}^{(M)}(s)}{M}, 
        \frac{Z_i^{(M)}(s)}{M}\right)\right. \\
      &\qquad\qquad -f^{i-1 i}\left(z_{i-1}(s), z_i(s)\right) d s,\\
    & \mathfrak{m}_i^{(M)}(t)
      = \frac{1}{\sqrt{M}}\left(\mathcal{M}_{i i+1}^{(M)}(t)
      -\mathcal{M}_{i-1 i}^{(M)}(t)\right), \\
    &\left\langle \mathfrak{m}_i^{(M), a}(*)\right\rangle _t
      =\chi_{\left\{\frac{1}{\sqrt{N}} \leq a\right\}} 
      \frac{1}{M}\left(\mathcal{A}^{(M) i i+1}(t)
      +\mathcal{A}^{(M) i-1 i}(t)\right), \\
    &\left\langle \mathfrak{m}_i^{(M), a}(*), \mathfrak{m}_j^{(M), 
    a}(*)\right\rangle _t
    =-\chi_{\left\{\frac{1}{\sqrt{N}} \leq a\right\}} 
    \frac{1}{M} \mathcal{A}^{(M) i+1}(t) \ \text { \textit{for} } \ 
    |j-i|=1, n-1, \\
  &\left\langle \mathfrak{m}_i^{(M), a}(*), 
  \mathfrak{m}_j^{(M), a}(*)\right\rangle _t
  =0 \ \text{ \textit{for} }\ 2 \leq|j-i| \leq n-2 .
  \end{align*}
  We present the following conditions which are known 
  in \cite{9,8,1,13}.
  
  \noindent
  For $t \in[0, \infty)$
  \begin{enumerate}[(A)]
    \item 
      $\lim _{M \rightarrow \infty} 
      \sup _{t \leq T}\left\|\Delta V^{(M)}(t)\right\|=0$ 
      in probability,
    \item 
      $\lim _{M \rightarrow \infty} 
      \sup _{t \leq T}\left\|B^{(M)}(t)-\int_0^t b(s) V(s)ds\right\|
      =0$ in probability,
    \item 
      $\lim _{M \rightarrow \infty} \sup _{t \leq T}
      \left| \left\langle\mathfrak{m}_j^{(M), a}(*), 
      \mathfrak{m}_k^{(M), a}(*)\right\rangle _t
      -\int_0^t c_{j k}(s) d s\right|=0$ in probability,
  \end{enumerate}
  for each $T>0$, $a \in(0,1]$ and $j$, $k=1,2, \cdots, n$ 
  and so-called condition of the linear growth
  \begin{enumerate}[(I)]
    \item 
      $\|b(t, V(t))\| \leq L(t)
      \left(1+\sup _{0 \leq s \leq t}\|V(s)\|\right)$,
    \item 
      $\sum_{j=1}^k\left|c_{j j}(t, V(t))\right| \leq L(t)
      \left(1+\sup _{0 \leq s \leq t}\|V(s)\|^2\right)$,
    \item 
      $\int_0^t L(s) d s<\infty$ for $t \in[0, \infty)$.
  \end{enumerate}
  It follows from [9] $V^{(M)}(t)$ converges weakly in distribution 
  to
  \[
    d V(t)=b(t, V(t)) d t+c^{\frac{1}{2}}(t, V(t)) d W(t)
  \]
  with a vector-valued Wiener process $W(*)$ consisting of independent 
  components, as $M$ tends to infinity.
  
  Now we shall prove these conditions.
  
  Condition of ``linear growth'' is clear because of the local 
  Lipschitz property of the functions. We prove three conditions (A), 
  (B) and (C) in the following steps.

  \noindent
  [Step 1] We claim that condition (A) holds.
  
  For any $t>0$,
  \[
    \left\|\Delta V^{(M)}(t)\right\|
      =\sqrt{M}\left\|\frac{\Delta Z^{(M)}(t)}{M}\right\| 
      \leq \frac{1}{\sqrt{M}}.
  \]
  Hence condition (A) holds, since for any $\epsilon>0$,
  \[
    P\left(\left\|\Delta V^{(M)}(t)\right\|>\epsilon\right) 
      \leq \frac{1}{\epsilon} 
      E\left[\left\|\Delta V^{(M)}(t)\right\|\right] 
      \leq \frac{1}{\epsilon \sqrt{M}}.
  \]
  [Step 2] We claim that
  \begin{equation}
    \lim _{M \rightarrow \infty} 
      P\left(\int_0^t \chi_{\big\{\frac{z_i^{(M)}(s)}{M}=0\big\}} d s>0\right)=0,
      \label{eq6.1}
  \end{equation}
  for any $t \in[0, \infty)$. 
  
  We have the estimate
  \[
    P\left(\int_0^t \chi_{\big\{\frac{z_i^{(M)}(s)}{M}=0\big\}} d s>0\right) 
      \leq P\left(\inf _{0 \leq s \leq t} \frac{Z_i^{(M)}(s)}{M}=0\right).
  \]
  Since
  \[
    \inf _{0 \leq s \leq t} \frac{Z_i^{(M)}(s)}{M} 
      \leq \inf _{0 \leq s \leq t} z_i(s)
      -\sup _{0 \leq s \leq t}\left|\frac{Z_i^{(M)}(s)}{M}-z_i(s)\right|,
  \]
  we have
  \[
    P\left(\int_0^t \chi_{\big\{\frac{z_i^{(M)}(s)}{M}=0\big\}} d s>0\right) 
      \leq P\left(\sup _{0 \leq s \leq t}\left|\frac{Z_i^{(M)}(s)}{M}-z_i(s)
      \right| 
        \geq \inf _{0 \leq s \leq t} z_i(s)\right) .
    \]
    From the weak law of large numbers and from the assumption of 
    $\inf _{0 \leq s \leq t} z_i(s)>0$, the claim (\ref{eq6.1}) holds.
  
  \noindent
  [Step 3] We claim that $B^{(M)}(t)$ is replaced by $\overline{B^{(M)}(t)}$ in 
  condition (B) and that $\langle\mathfrak{m}_j^{(M), a}(*)$, \linebreak
  $\mathfrak{m}_k^{(M), a}(*)\rangle _t$ is replaced by 
  $\overline{\langle \mathfrak{m}_j^{(M), a}(*), \mathfrak{m}_k^{(M), a}(*)
  \rangle _t}$ in condition (C), where
  \begin{align*}
    \overline{B_t^{i(M)}} 
      = & \int_0^t \sqrt{M}\left(f^{i i+1}\left(\frac{Z_i^{(M)}(s)}{M}, 
        \frac{Z_{i+1}^{(M)}(s)}{M}\right)
        -f^{i i+1}\left(z_i(s), z_{i+1}(s)\right) d s\right. \\
        & -\int_0^t \sqrt{M}\left(f^{i-1 i}\left(\frac{Z_{i-1}^{(M)}(s)}{M}, 
        \frac{Z_i^{(M)}(s)}{M}\right)
        -f^{i-1 i}\left(z_{i-1}(s), z_i(s)\right) d s\right., 
  \end{align*}
  \begin{align*}
    &  \overline{\left\langle\mathfrak{m}_j^{(M), a}(*), \mathfrak{m}_k^{(M), a}(*)
    \right\rangle _t}
    \\
    & \quad
    =\left\{\renewcommand{\arraystretch}{2}
    \begin{array}{@{}l@{}}
      \chi_{\big\{\frac{1}{\sqrt{M}} \leq a\big\}}
      \left(\dint_0^t f^{j j+1}\left(\dfrac{Z_j^{(M)}(s)}{M}, 
      \dfrac{Z_{j+1}^{(M)}(s)}{M}\right) d s\right. \\
      \left.\quad+\dint_0^t f^{j-1 j}\left(\dfrac{Z_{j-1}^{(M)}(s)}{M}, 
      \dfrac{Z_j^{(M)}(s)}{M}\right) d s\right),
      \ \text { for } \ j=k, \\
      -\chi_{\big\{\frac{1}{\sqrt{M}} \leq a\big\}} 
      \dint_0^t f^{j j+1}\left(\dfrac{Z_j^{(M)}(s)}{M}, 
      \dfrac{Z_{j+1}^{(M)}(s)}{M}\right) d s, 
      \ \text { for } \ |k-j|=1, n-1, \\
      0, \text { for } 2 \leq|k-j| \leq n-2 .
  \end{array}\right.
  \end{align*}
  We consider the case of $\overline{B^{(M)}(t)}$. Since 
  \begin{align*}
    & \sup _{t \leq T} \left|B_i^{(M)}(t)-\int_0^t b(s) V(s) d s\right| \\
    &\quad 
      \leq  \sup _{t \leq T}
      \left|\overline{B_i^{(M)}(t)}-\int_0^t b(s) V(s) d s\right| \\
    &\quad
      +\sup _{t \leq T} \| \int_0^t \sqrt{M} 
        \chi_{\big\{\frac{z_i^{(M)}(s)}{M}=0 
          \text { \textit{or} } \frac{z_{i+1}^{(M)}(s)}{M}=0\big\}} 
      f^{i i+1}\left(\frac{Z_i^{(M)}(s)}{M}, \frac{Z_{i+1}^{(M)}(s)}{M}\right)ds 
      \\
    & \quad
      -\int_0^t \sqrt{M} \chi_{\big\{\frac{z_{i-1}^{(M)}(s)}{M}=0 
      \text { \textit{or} } \frac{z_i^{(M)}(s)}{M}=0\big\}} 
      f^{i-1 i}\left(\frac{Z_{i-1}^{(M)}(s)}{M}, \frac{Z_i^{(M)}(s)}{M}\right)ds \| 
      \\
    &\quad
    \leq \sup _{t \leq T}
    \left\|\overline{B^{(M)}(t)}-\int_0^t b(s) V(s) d s\right\| \\
    &\quad
      +C_0 \sqrt{M} \sup _{t \leq T} \left\lvert\, 
      \int_0^t \chi_{\big\{\frac{z_i^{(M)}(s)}{M}=0\big\}} d s
      +\int_0^t \chi_{\big\{\frac{z_{i+1}^{(M)}(s)}{M}=0\big\}} d s\right. 
      \\& \left.\quad+\int_0^t \chi_{\big\{\frac{z_{i-1}^{(M)}(s)}{M}=0\big\}} d s+\int_0^t \chi_{\big\{\frac{z_i^{(M)}(s)}{M}=0\big\}} d s \right\rvert\, 
      \\
    &\quad \leq 
    \sup _{t \leq T}\left\|\overline{B^{(M)}(t)}-\int_0^t b(s) V(s) d s\right\| 
    \\
    &\quad 
    +  C_0 \sqrt{M}\left\{\int_0^T \chi_{\big\{\frac{z_i^{(M)}(s)}{M}=0\big\}}ds
    +\int_0^T \chi_{\big\{\frac{z_{i+1}^{(M)}(s)}{M}=0\big\}} d s\right. \\
    & \left.\quad+\int_0^T \chi_{\big\{\frac{z_{i-1}^{(M)}(s)}{M}=0\big\}} ds
    +\int_0^T \chi_{\big\{\frac{z_i^{(M)}(s)}{M}=0\big\}} d s\right\}, 
  \end{align*}
  where $C_0=\max _{1 \leq j \leq n} \sup _{0<x<1,0<y<1} f^{j j+1}(x, y)$, 
  we have
  \begin{align*}
    & P\left(\sup _{t \leq T}\left\|B^{(M)}(t)-\int_0^t b(s) V(s) d s\right\|
    >\epsilon\right) \\
    &\quad 
      \leq P\left(\sup _{t \leq T}\left\|\overline{B^{(M)}(t)}
      -\int_0^t b(s) V(s) d s\right\|>\frac{\epsilon}{2}\right) \\
    & \quad
      +\sum_{j=1}^n P\left(C \sqrt{M} 
      \int_0^T \chi_{\big\{\frac{z_j^{(M)}(s)}{M}=0\big\}} d s 
      > \frac{\epsilon}{8 n}\right) \\
    &\quad
     \leq P\left(\sup _{t \leq T}\left\|\overline{B^{(M)}(t)}
     -\int_0^t b(s) V(s) d s\right\|>\frac{\epsilon}{2}\right) \\
    & \quad
    +\sum_{j=1}^n P\left(\int_0^T \chi_{\big\{\frac{z_j^{(M)}(s)}{M}=0\big\}} 
    d s>0\right).
  \end{align*}
  
  When we take the limit of $M \rightarrow \infty$,
  \begin{align*}
    & \lim _{M \rightarrow \infty} 
      P\left(\sup _{t \leq T}\left\|B^{(M)}(t)
      -\int_0^t b(s) V(s) d s\right\|>\epsilon\right) \\
      &\quad
       \leq \lim _{M \rightarrow \infty}
       P\left(\sup _{t \leq T}\left\|\overline{B^{(M)}(t)}
       -\int_0^t b(s) V(s) d s\right\|>\frac{\epsilon}{2}\right).
  \end{align*}
  
  The proof with respect to condition (C) can be done in a similar way. 
  
  Therefore the claim holds.

  \noindent
  [Step 4] We claim that condition (B) holds.
  
  Considering [Step 3], we have the following estimate:
  {\mathindent=0mm
  \begin{align*}
    & \sup _{t \leq T}\left\|\overline{B^{(M)}(t)}-\int_0^t b(s) V(s) d s\right\| 
    \\
    &\quad
     \leq \int_0^T \| \sqrt{M}\left\{\left(f^{i i+1}\left(\frac{Z_i^{(M)}(s)}{M}, 
    \frac{Z_{i+1}^{(M)}(s)}{M}\right)-f^{i i+1}\left(z_i(s), z_{i+1}(s)\right)\right)
    \right. \\
    &\qquad
     \left.-\left(f^{i-1 i}\left(\frac{Z_{i-1}^{(M)}(s)}{M}, \frac{Z_i^{(M)}(s)}{M}
    \right)-f^{i-1 i}\left(z_i(s) z, z_{i+1}(s)\right)\right)\right\} 
    \\
    &\qquad
     -\sum_{j=1}^n\left\{\left(\frac{\partial f_x^{i i+1}}{\partial x_j}
    -\frac{\partial f_x^{i-1 i}}{\partial x_j}\right)\left(z_1(s), z_2(s), 
    \cdots , z_n(s)\right) V_j^{(M)}(s)\right\} \| d s \\
    &\quad
     \leq \int_0^T \| V_i^{(M)}(s) f_x^{i i+1}
    \left(z_i(s)+\theta^{i i+1}\left(\frac{Z_i^{(M)}(s)}{M} z_i(s)\right), 
    z_{i+1}(s)+\theta^{i i+1}\left(\frac{Z_{i+1}^{(M)}(s)}{M}-z_{i+1}(s)\right)
    \right) \\
    &\qquad
     +V_{i+1}^{(M)}(s) f_y^{i i+1}\left(z_i(s)+\theta^{i i+1}
    \left(\frac{Z_i^{(M)}(s)}{M} z_i(s)\right), z_{i+1}(s)+\theta^{i i+1}
    \left(\frac{Z_{i+1}^{(M)}(s)}{M}-z_{i+1}(s)\right)\right) \\
    &\qquad
     -V_{i-1}^{(M)}(s) f_x^{i-1 i}\left(z_{i-1}(s)+\theta^{i-1 i}
    \left(\frac{Z_{i-1}^{(M)}(s)}{M} z_{i-1}(s)\right), z_i(s)+\theta^{i-1 i}
    \left(\frac{Z_i^{(M)}(s)}{M}-z_i(s)\right)\right) \\
    &\qquad
     -V_i^{(M)}(s) f_y^{i-1 i}\left(z_{i-1}(s)+\theta^{i-1 i}
    \left(\frac{Z_{i-1}^{(M)}(s)}{M} z_{i-1}(s)\right), z_i(s)+\theta^{i-1 i}
    \left(\frac{Z_i^{(M)}(s)}{M}-z_i(s)\right)\right) \\
    &\qquad
     -V_i^{(M)}(s) f_x^{i i+1}\left(z_i(s), z_{i+1}(s)\right)
    -V_{i+1}^{(M)}(s) f_y^{i i+1}\left(z_i(s), z_{i+1}(s)\right) \\
    &\qquad
     +V_{i-1}^{(M)}(s) f_x^{i-1 i}\left(z_{i-1}(s), z_i(s)\right)
    +V_i^{(M)}(s) f_y^{i-1 i}\left(z_{i-1}(s), z_i(s)\right) \| d s \\
    &\quad
     \leq \sup _{t \leq T}\left\|V^{(M)}(t)\right\| \sup _{t \leq T}
    \left\|\frac{Z^{(M)}(t)}{M}-z(t)\right\| 4 C T,
  \end{align*}
  }%
  where $C$ is a positive constant of the maximum of the Lipschitz 
  constants such that for $0<x_1<1$, $0<x_2<1$, $0<y_1<1$, $0<y_2<1$,
  \begin{align*}
  & 
    \sup _{0<x_1<1,0<x_2<1} 
      \frac{\left|f_x^{j j+1}(x_1, y_1)
      -f_x^{j j+1}(x_2, y_2)\right|}{\left|x_1-x_2\right|} 
      \leq C_{x x}^j, \\
  & 
    \sup _{0<y_1<1,0<y_2<1} 
      \frac{\left|f_x^{j j+1}\left(x_1, y_1\right)-f_x^{j j+1}
      \left(x_2, y_2\right)\right|}{\left|y_1-y_2\right|} 
      \leq C_{x y}^j, \\
  & 
    \sup _{0<x_1<1,0<x_2<1} 
      \frac{\left|f_y^{j j+1}\left(x_1, y_1\right)
      -f_y^{j j+1}\left(x_2, y_2\right)\right|}{\left|x_1-x_2\right|} 
      \leq C_{y x}^j \\
  & 
    \sup _{0<y_1<1,0<y_2<1} 
    \frac{\left|f_y^{j j+1}\left(x_1, y_1\right)
    -f_y^{j j+1}\left(x_2, y_2\right)\right|}{\left|y_1-y_2\right|} 
    \leq C_{y y}^j,\\
  & 
    C=\max _{1 \leq j \leq n}
    \left\{C_{x x}^j, C_{x y}^j, C_{y x}^j, C_{y y}^j\right\},
  \end{align*}
  and where $\theta^{j j+1} \in[0,1]$ $(1 \leq j \leq n)$ 
  are parameters in the mean value theorem.

  Hence
  \begin{align*}
    & P\left(\sup _{t \leq T}
      \left\|\overline{B^{(M)}(t)}
      -\int_0^t b(s) V(s) d s\right\| \geq \epsilon\right) \\
    & \quad
      \leq P\left(\sup _{t \leq T}\left\|V^{(M)}(t)\right\| 
      \geq l\right)
      +P\left(\sup _{t \leq T}\left\|
        \frac{Z^{(M)}(t)}{M}-z(t)\right\| 
        \geq \frac{\epsilon}{4 l C T}\right).
\end{align*}
  If
  \begin{equation}
    \lim _{l \rightarrow \infty} 
    \overline{\lim}_{M \rightarrow \infty} 
    P\left(\sup _{t \leq T}\left\|V^{(M)}(t)\right\| \geq l\right)=0,
    \label{eq6.2}
  \end{equation}
  then, from the weak law of large numbers (Theorem~\ref{theo4.1}), 
  for any $\delta>0$ there exists an integer $l$ such that
  \begin{align*}
    & 
      P\left(\sup _{t \leq T}\left\|V^{(M)}(t)\right\| 
      \geq l\right)<\delta, \\
    & 
      P\left(\sup _{t \leq T}\left\|\frac{Z^{(M)}(t)}{M}-z(t)\right\| 
      \geq \frac{\epsilon}{4 l C T}\right)<\delta .
  \end{align*}
  Therefore we get
  \[
    \overline{\lim }_{M \rightarrow \infty} 
    P\left(\sup _{t \leq T}\left\|\overline{B^{(M)}(t)}
    -\int_0^t b(s) V(s) d s\right\| \geq \epsilon\right)=0 .
  \]
  Now, we shall prove (\ref{eq6.2}). We have
  \begin{align*}
    \lefteqn{\left|V_i^{(M)}(t)\right|}\quad \\
      & \leq\left|V_i^{(M)}(0)\right|
        +\int_0^t C_{x, y}\left|V_i^{(M)}(s)\right| d s \\
      & \quad
        +C_0 \sqrt{M} 
        \int_0^t\left(\chi_{\big\{\frac{z_i^{(M)}(s)}{M}=0\big\}}
        +\chi_{\big\{\frac{z_{i+1}^{(M)}(s)}{M}=0\big\}}\right) ds 
        \\
      & \quad
        +\frac{1}{\sqrt{M}} 
        \sup _{0 \leq s \leq t}\left|\mathcal{M}_{i i+1}^{(M)}(s)
        -\mathcal{M}_{i-1 i}^{(M)}(s)\right|,
  \end{align*}
  where $C_0=\max _{1 \leq j \leq n} \sup _{0<x<1,0<y<1} 
  f^{j j+1}(x, y)$ and where 
  $C_{x, y}=\sup _{0 \leq x \leq 1,0 \leq y \leq 1}$\linebreak
  $f_x^{i i+1}(x, y)$
   + $\sup _{0 \leq x \leq 1,0 \leq y \leq 1} f_y^{i-1 i}(x, y)$.
   
  By Gromwell's inequality,
  \begin{align*}
    \lefteqn{\left|V_i^{(M)}(t)\right|}\quad \\
    & 
      \leq\left\{\left|V_i^{(M)}(0)\right|\right. \\
    & \quad
        +C_0 \sqrt{M} \int_0^t
        \Big(\chi_{\big\{\frac{z_i^{(M)}(s)}{M}=0\big\}}
        +\chi_{\big\{\frac{z_{i+1}^{(M)}(s)}{M}=0\big\}}\Big) d s 
        \\
    & \left.\quad
      +\frac{1}{\sqrt{M}} 
      \sup _{0 \leq s \leq t}
      \left|\mathcal{M}_{i i+1}^{(M)}(s)
      -\mathcal{M}_{i-1 i}^{(M)}(s)\right|\right\} 
      \cdot \exp \left\{C_{x, y} t\right\}.
  \end{align*}
  (\ref{eq6.2}) is estimated as
  \begin{align*}
    \lefteqn{
      P\left(\left\|V^{(M)}(t)\right\| \geq l\right) 
    \leq  P\left(C_1\left\|V^{(M)}(0)\right\| 
    \geq \frac{l}{3}\right) } \quad  \\
    & \qquad
      +\sum_{i=1}^n P\left(\int_0^t 
      \chi_{\big\{\frac{z_i^{(M)}(s)}{M}=0\big\}} d s>0\right) 
      \\
    & \qquad
      +\sum_{i=1}^n P\left(C_1 \frac{1}{\sqrt{M}} 
      \sup _{0 \leq s \leq t}
      \left|\mathcal{M}_{i i+1}^{(M)}(s)
      -\mathcal{M}_{i-1 i}^{(M)}(s)\right| \geq \frac{l}{3 n}\right),
  \end{align*}
  where $C_1=\exp \{C_{x, y} t\}$. 
  From the assumption of the theorem, the first term is convergent to 
  zero in probability as $M$ tends to infinity. From (\ref{eq6.1}), 
  the second term is convergent to zero in probability as $M$ tends to 
  infinity. From Chebyshev's inequality and the martingale inequality, 
  the third term is convergent to zero in probability, as $l$ tends to 
  infinity, since
  \begin{align*}
    & P\left(C_1 \frac{1}{\sqrt{M}} \sup _{0 \leq s \leq t}
    \left|\mathcal{M}_{i i+1}^{(M)}(s)
    -\mathcal{M}_{i-1 i}^{(M)}(s)\right| \geq \frac{l}{3 n}\right) 
    \\
    &\quad
     \leq \frac{3 n C_1 C_2}{l} E
     \left[\frac{1}{M}\left\langle \mathcal{M}_{i i+1}^{(M)}(*)\right\rangle _t
     + \left\langle\mathcal{M}_{i-1 i}^{(M)}(*)\right\rangle _t\right] 
     \\
     &\quad
      \leq \frac{3 n C_1 C_2}{l} 2 t \max _{1 \leq j \leq n} 
      \sup _{0<x<1,0<y<1} f^{j j+1}(x, y).
  \end{align*}
  where $C_2$ is a constant of the martingale inequality.
  
  Therefore the claim holds.

  \noindent
  [Step 5] We claim that condition (C) holds.
  
  By [Step 3], we prove that
  \[
    \lim _{M \rightarrow \infty} 
    \sup _{t \leq T} 
    \overline{\mid \left\langle \mathfrak{m}_j^{(M), a}(*), 
      \mathfrak{m}_k^{(M), a}(*)\right\rangle _t}
    -\int_0^t c_{j k}(s) d s \mid=0 
    \quad \text { \textit{in probability}. }
  \]
  We take the integer $M$ as $M>\frac{1}{a^2}$.

  There are no interactions between $j$ and $k$ for $2 \leq|j-k| 
  \leq n-2$. Hence condition (C) holds for this case.

  We consider the case of diagonal element of the quadratic variational 
  part.
  \begin{align*}
    & \sup _{t \leq T}\left| \left\langle 
      \frac{\mathcal{M}_{i i+1}^{(M)}(*)
      -\mathcal{M}_{i-1 i}^{(M)}(*)}{\sqrt{M}}\right\rangle _t
      -\int_0^t c_{i i}(s) d s\right| \\
    &\quad
     =\sup _{t \leq T}\left|\frac{1}{M}\left\langle
       \mathcal{M}_{i i+1}^{(M)}(*)\right\rangle _t
       +\frac{1}{M}\left\langle \mathcal{M}_{i-1 i}^{(M)}(*)
       \right\rangle _t-\int_0^t c_{i i}(s) d s\right| 
       \\
    &\quad
     \leq \sup _{t \leq T} \left|
      \int_0^t\left\{ f^{i i+1}\left(\frac{Z_i^{(M)}(s)}{M}, 
      \frac{Z_{i+1}^{(M)}(s)}{M}\right)-f^{i i+1}
      \left(z_i(s), z_{i+1}(s)\right) d s\right.\right. \\
    & \left.\qquad
      +\int_0^t\left\{f^{i-1 i}\left(\frac{Z_{i-1}^{(M)}(s)}{M}, 
      \frac{Z_i^{(M)}(s)}{M}\right)
      -f^{i-1 i}\left(z_{i-1}(s), z_i(s)\right) d s\right\} 
      d s \right| \\
    &\quad
     \leq 2 C T \sup _{t \leq T}
     \left|\frac{Z_i^{(M)}(s)}{M}-z_i(s)\right|,
  \end{align*}
  where $C$ is a constant of the maximum value of the Lipschitz 
  constants such that for $0<x_1<1$, $0<x_2<10<y_1<1$, $0<y_2<1$,
  \begin{align*}
    & \sup _{0<x_1<1,0<x_2<1} 
      \frac{\left|f^{j j+1}\left(x_1, y_1\right)-f^{j j+1}
      \left(x_2, y_2\right)\right|}{\left|x_1-x_2\right|} 
      \leq C_x^j, \\
    & \sup _{0<y_1<1,0<y_2<1} 
      \frac{\left|f^{j j+1}\left(x_1, y_1\right)-f^{j j+1}
      \left(x_2, y_2\right)\right|}{\left|y_1-y_2\right|} \leq C_y^j, 
      \\
    & C=\max \left\{C_x^1, C_x^2, \cdots, C_x^n, C_y^1, C_y^2, 
      \cdots, C_y^n\right\}.
  \end{align*}
  This term is convergent to zero in probability, 
  from the weak law of large numbers of Theorem~\ref{theo4.1}.

  Moreover,
  \begin{align*}
    & \sup _{t \leq T}\left|
      \left\langle \frac{\mathcal{M}_{i i+1}^{(M)}(*)
      -\mathcal{M}_{i-1 i}^{(M)}(*)}{\sqrt{M}}, 
      \frac{\mathcal{M}_{i+1 i+2}^{(M)}(*)
      -\mathcal{M}_{i i+1}^{(M)}(*)}{\sqrt{M}}\right\rangle _t
      -\int_0^t c_{i i+1}(s) d s\right| \\
    & \quad
      =\sup _{t \leq T}\left|-\frac{1}{M}
      \left\langle \mathcal{M}_{i i+1}^{(M)}(*)\right\rangle _t
      +\int_0^t c_{i i+1}(s) d s\right| \\
    &  \quad
      \leq \sup _{t \leq T} 
      \left| \int_0^t\left\{ f^{i i+1}
      \left(\frac{Z_i^{(M)}(s)}{M}, \frac{Z_{i+1}^{(M)}(s)}{M}\right)
      -f^{i j+1}(z_i(s), z_{i+1}(s)) d s \right|\right. \\
    &  \quad
      \leq C T \sup _{t \leq T}
      \left|\frac{Z_i^{(M)}(s)}{M}-z_i(s)\right|.
  \end{align*}
  This term is also convergent to zero in probability, 
  from the weak law of large numbers of Theorem~\ref{theo4.1}.
  
  Therefore the claim holds.\hfill $\square$
\end{proof}

\begin{rem}\label{rem6.1}
  \normalfont
  It is easy to see that the matrix $c(t)$ has eigenvalue zero and the 
  eigenvector $(1,1, \cdots, 1)$. 
  Hence we consider the eigenvector $(*, *, \cdots, *, 0)$ which is 
  independent of $(1,1, \cdots, 1)$. 
  In the restricted $(n-1) \times(n-1)$ matrix of $c(t)$ all 
  determinants of the leading minor matrix are positive. 
  Thus the restricted $(n-1) \times(n-1)$ matrix is positive definite. 
  Consequently, the matrix $c(t)$ is positive semi-definite.
\end{rem}

\section{Application of the central limit theorem to \\
aper-scissors-stone model}
\label{sec7}

We set
\[
  Y^{(M)}(t)=\sqrt{M}\left(\frac{X^{(M)}(t)}{M}-u(t)\right),
\]
for $t \in[0, \infty)$. We shall show that a sequence of the process 
$(Y^{(M)}(t))_{t \geq 0}$ admits the central limit theorem in our model.

We apply Theorem~\ref{theo6.1} to our model. 
Then we get the following theorem.

\begin{theo}\label{theo7.1}
  We assume
  \[
    \lim _{M \rightarrow \infty} 
      \frac{X^{(M)}(0)}{M}=u(0)
      \quad \text { \textit{in probability}, }
  \]
    as well as the case of the weak law of large numbers.

    Let the sequence of random variables $\{Y^{(M)}(0)\}_{M \geq 1}$ 
    converge weakly to a distribution $G$.
  
    Then the sequence of the probability distributions of the processes 
    $Y^{(M)}(t)$ converges weakly to the distribution of an 
    $\mathbb{R}^3$-valued Gaussian diffusion process $Y= 
    (Y(t))_{t \geq 0}$ defined by the stochastic equation 
    in the vector form
    \[
      d Y(t)=b(t) Y(t) d t+c^{\frac{1}{2}}(t) d W(t),
    \]
    with an $\mathbb{R}^3$ valued Wiener process $W=(W_t)_{t \geq 0}$, 
    with the initial condition $Y(0)$ having the distribution $G$ and 
    with a matrix
    \begin{align*}
      & 
        b(t)=\left(
          \begin{array}{@{}ccc@{}}
            \lambda(u_2(t)-u_3(t)) & \lambda u_1(t) & -\lambda u_1(t) 
              \\
            -\lambda u_2(t) & \lambda(u_3(t)-u_1(t)) & \lambda u_2(t) 
              \\
            \lambda u_3(t) & -\lambda u_3(t) & \lambda(u_1(t)-u_2(t))
          \end{array}\right),
    \end{align*}
    \begin{align*}
      & 
        c(t)= \\
      & 
        \left(\arraycolsep=3pt
        \begin{array}{@{}ccc@{}}
          \lambda(u_1(t) u_2(t)+u_3(t) u_1(t)) 
          & -\lambda u_1(t) u_2(t) & -\lambda u_3(t) u_1(t) 
            \\
          -\lambda u_1(t) u_2(t) 
          & \lambda(u_2(t) u_3(t)+u_1(t) u_2(t)) 
          & -\lambda u_2(t) u_3(t) 
            \\
          -\lambda u_3(t) u_1(t) & -\lambda u_2(t) u_3(t) 
          & \lambda(u_3(t) u_1(t)+u_2(t) u_3(t))
        \end{array}\right) .
  \end{align*}
\end{theo}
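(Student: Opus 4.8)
The plan is to obtain Theorem~\ref{theo7.1} as a direct specialization of Theorem~\ref{theo6.1} to the case $n=3$ with the choice $f^{j j+1}(x,y)=h(x,y)\equiv\lambda xy$ for $j=1,2,3$, i.e.\ exactly the functions already used in the proof of Theorem~\ref{theo5.1}. First I would verify that every hypothesis of Theorem~\ref{theo6.1} is met. The function $h(x,y)=\lambda xy$ is non-negative and $C^{\infty}$ on $[0,\infty)^2$, with $h_x(x,y)=\lambda y$ and $h_y(x,y)=\lambda x$; these derivatives are affine, hence locally Lipschitz in each variable, so the regularity requirement on $f^{j j+1}$ holds. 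The normalization $u_1(0)+u_2(0)+u_3(0)=1$ is part of the hypotheses carried over from the weak law. The remaining structural condition, $\inf_{0\le s\le t}u_i(s)>0$ for $1\le i\le 3$, is obtained exactly as in the proof of Theorem~\ref{theo5.1}: the system~(\ref{eq5.1}) admits the two conserved quantities $u_1(t)+u_2(t)+u_3(t)=1$ and $u_1(t)u_2(t)u_3(t)=u_1(0)u_2(0)u_3(0)$, and since $0<u_i(0)<1$ forces $u_1(0)u_2(0)u_3(0)>0$, each $u_i(t)$ stays $\ge u_1(0)u_2(0)u_3(0)>0$ for all $t\ge 0$. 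Finally, $X^{(M)}(*)$ has the semimartingale structure demanded in Theorem~\ref{theo6.1} by Theorem~\ref{theo3.3} and Corollary~\ref{coro3.3} (with $\mathcal M^{(M)}_{j j+1}$, $T^{(M)}_{j j+1}$ playing the roles of $\mathcal M^{(M)}_{j j+1}$, $\mathcal A^{(M)}_{j j+1}$; here the indicator cut-offs in $\mathcal A^{(M)}_{j j+1}$ are harmless because $\lambda xy$ already vanishes when $x=0$ or $y=0$), while $X^{(M)}(0)/M\to u(0)$ in probability and the weak convergence of $Y^{(M)}(0)$ to $G$ are assumed.

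Once the hypotheses are in place, Theorem~\ref{theo6.1} applies and gives that $Y^{(M)}(t)=\sqrt M\,(X^{(M)}(t)/M-u(t))$ converges weakly to the Gaussian diffusion $dY(t)=b(t)Y(t)\,dt+c^{1/2}(t)\,dW(t)$ with $Y(0)\sim G$, where $b(t)$ is the Jacobian of $f_0$ along $u(t)$ and $c(t)$ is the covariance matrix given there. It then remains to evaluate these two matrices explicitly. With $f_0^1=\lambda(x_1x_2-x_3x_1)$, $f_0^2=\lambda(x_2x_3-x_1x_2)$, $f_0^3=\lambda(x_3x_1-x_2x_3)$, differentiation gives $\partial f_0^1/\partial x_1=\lambda(x_2-x_3)$, $\partial f_0^1/\partial x_2=\lambda x_1$, $\partial f_0^1/\partial x_3=-\lambda x_1$, and the analogous entries for $f_0^2,f_0^3$; setting $x=u(t)$ reproduces the displayed $b(t)$. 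For $c(t)$, the general recipe in Theorem~\ref{theo6.1} with $n=3$ gives diagonal entries $c_{jj}=f^{j j+1}(u_j,u_{j+1})+f^{j-1\,j}(u_{j-1},u_j)$ and, since every pair of distinct indices in $\{1,2,3\}$ is cyclically adjacent, off-diagonal entries $c_{j,j+1}=c_{j+1,j}=-f^{j,j+1}(u_j,u_{j+1})$ (the range $2\le|j-k|\le n-2$ being empty for $n=3$); substituting $f^{12}=\lambda u_1u_2$, $f^{23}=\lambda u_2u_3$, $f^{31}=\lambda u_3u_1$ yields precisely the stated $c(t)$.

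I do not expect a genuine obstacle: the argument is a verification of hypotheses followed by two short matrix computations, the analytic substance having already been supplied by Theorems~\ref{theo3.3}, \ref{theo4.1} and \ref{theo6.1}. The one point deserving care is the positivity $\inf_{0\le s\le t}u_i(s)>0$, which is \emph{not} automatic from $0<u_i(0)<1$ alone but genuinely uses the product invariant $u_1u_2u_3=\text{const}$; I would state this invariant and its consequence explicitly. For completeness I would also note, via Remark~\ref{rem6.1} with $n=3$, that $c(t)$ is positive semi-definite with kernel spanned by $(1,1,1)$—reflecting the conservation $X_1^{(M)}+X_2^{(M)}+X_3^{(M)}=M$—so that $c^{1/2}(t)$ is well defined and the limiting stochastic differential equation is meaningful.
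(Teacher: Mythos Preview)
Your proposal is correct and follows essentially the same approach as the paper: the paper's own proof of Theorem~\ref{theo7.1} consists of a single sentence applying Theorem~\ref{theo6.1} with $f^{jj+1}(x,y)=\lambda xy$ and noting that the derivatives $f_x^{jj+1}=\lambda y$, $f_y^{jj+1}=\lambda x$ satisfy the required local Lipschitz condition with constant $\lambda$. Your write-up is in fact more complete than the paper's, since you also spell out the positivity argument via the invariant $u_1u_2u_3$, the explicit matrix computations for $b(t)$ and $c(t)$, and the semidefiniteness of $c(t)$ --- all of which the paper either relegates to earlier sections or leaves implicit.
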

\begin{proof}\normalfont
  The functions $f^{j j+1}=f^{j j+1}(x, y)=\lambda x y$ of 
  Theorem~\ref{theo6.1} satisfy the local Lipschitz condition of the 
  derivatives $f_x^{j j+1}=\frac{\partial f^{j j+1}}{\partial x}(x, y)
  =\lambda y$ and $f_y^{j j+1}= \frac{\partial f^{i j+1}}{\partial y}
  (x, y)=\lambda x$ for each variable $0 \leq x$, $y \leq 1$ with 
  Lipschitz constant $\lambda$.
  \hfill $\square$
\end{proof}

\begin{rem}\label{rem7.1}\normalfont 
  Consider the system of $n$ cyclic prey-predator relations of 
  neighboring two species. Similarly as in the paper-scissors-stone 
  model, the number increasing over time $t$ of $i$-th species is equal 
  to the number decreasing by time $t$ of $i+1$-th species. Both the 
  weak law of large numbers and the central limit theorem for the 
  paper-scissors-stone model can be extended to this system of $n$ 
  cyclic prey-predator relations.
\end{rem}

\end{document}